\def \N {\mathbb{N}}
\def \R {\mathbb{R}}
\def \XX {\mathcal{X}}
\def \de {\partial}
\def \LL {\mathcal{L}}
\def \Oo {\mathcal{O}}
\theoremstyle{definition}
\newtheorem{definition}{Definition}[section]
\newtheorem{remark}[definition]{Remark}
\theoremstyle{plain}
\newtheorem{theorem}[definition]{Theorem}
\newtheorem{proposition}[definition]{Proposition}
\newtheorem{lemma}[definition]{Lemma}
\newtheorem{corollary}[definition]{Corollary}
\numberwithin{equation}{section}
\begin{document}

 \title[Local-nonlocal singular problems with absorption]
 {Regularizing effects of absorption terms in local-nonlocal mild singular problems}

 \author[S.\,Biagi]{Stefano Biagi}
 \author[E.M.\,Merlino]{Enzo Maria Merlino}
 \author[E.\,Vecchi]{Eugenio Vecchi}
 
 \address[S.\,Biagi]{Dipartimento di Matematica
 \newline\indent Politecnico di Milano \newline\indent
 Via Bonardi 9, 20133 Milano, Italy}
 \email{stefano.biagi@polimi.it}

 \address[E.M.\,Merlino]{Dipartimento di Matematica
 \newline\indent Università di Bologna \newline\indent
 Piazza di Porta San Donato 5, 40126 Bologna, Italy}
 \email{enzomaria.merlino2@unibo.it} 
 
 \address[E.\,Vecchi]{Dipartimento di Matematica
 \newline\indent Università di Bologna \newline\indent
 Piazza di Porta San Donato 5, 40126 Bologna, Italy}
 \email{eugenio.vecchi2@unibo.it}

\subjclass[2020]{35J75, 35M12, 35J61, 35B51}

\keywords{Local-nonlocal operators, singular PDEs, comparison principles}

\thanks{The Authors are
member of the {\em Gruppo Nazionale per
l'Analisi Ma\-te\-ma\-ti\-ca, la Probabilit\`a e le loro Applicazioni}
(GNAMPA) of the {\em Istituto Nazionale di Alta Matematica} (INdAM) and are
partially 
supported by the PRIN 2022 project 2022R537CS \emph{$NO^3$ - Nodal Optimization, NOnlinear elliptic equations, NOnlocal geometric problems, with a focus on regularity}, founded by the European Union - Next Generation EU.  S.B. and E.V. are also partially supported by the Indam-GNAMPA project CUP E5324001950001 - {\em Problemi singolari e degeneri: esistenza, unicità e analisi delle proprietà qualitative delle soluzioni}. E.M.M. is also partially supported by the Indam-GNAMPA project CUP E5324001950001 - {\em Ottimizzazione spettrale, geometrica e funzionale}. \\
The Authors thank F. Oliva for many useful and fruitful discussions on the topics of the paper}

 \begin{abstract}
In this paper we prove existence and uniqueness of energy solutionns for singular problems with absorption driven by local-nonlocal operators. Moreover, 
we establish a comparison principle \`a la Talenti, leading to a gain of summability result for the solutions of these problems.
 \end{abstract}
 
 \maketitle 
 
\section{Introduction}\label{sec.Intro}
Local-nonlocal operators have been studied in connection with the validity of maximum principles since the '60s, see e.g. \cite{BCP, Cancelier}. In the last twenty years there has been a restored interest thanks to the their natural appearance as stochastic processes combining a Brownian motion and a Lévy-type process, see e.g. \cite{CKSV} and the references therein. In this paper we are interested in working with the simplest possible realization of such operators, namely
\begin{equation*}
	\mathcal{L}:=	-\Delta + (-\Delta)^s, \quad s \in (0,1),
\end{equation*}
\noindent where $(-\Delta)^s$ denotes the fractional Laplacian. We note that the above operator falls in the general framework considered in \cite{BFV} thanks to the Green function estimates proved in \cite{CKSV}. Despite being linear, the operator $\mathcal{L}$ exhibits a natural lack of homogeneity which may cause significant differences w.r.t. to $-\Delta$ or $(-\Delta)^s$: for example, this is the case in critical problems, see e.g. \cite{BDVV5, BiagiVecchi, BiagiVecchi2}.
In the last five years, a more analytical approach towards regularity properties of $\mathcal{L}$ (and its quasilinear variants) has been developed, see e.g. \cite{BDVV, AC, GarainKinnunen, GarainLindgren, DeFMin, SVWZ, BMS, BySo, DiFaZh, AntoCozzi}. We stress in particular that in \cite{SVWZ2} it has been recently showed that natural $C^{2,\alpha}$ Schauder estimates up to the boundary may fail. We finally mention that operators like $\mathcal{L}$ may appear in biomathematical models, see e.g., \cite{DPLV2, DV} and the references therein.
\medskip

We are now ready to describe the problem we are interested in.
Let $\Omega \subset \mathbb{R}^{n}$ be an open and bounded set with smooth enough boundary $\partial \Omega$.
We consider the following local-nonlocal problem:
\begin{equation}\label{eq:Problem}
	\left\{ \begin{array}{rl}
		\mathcal{L} u + g(u) = h(u)f & \textrm{ in } \Omega,\\
		u>0 & \textrm{ in } \Omega,\\
		u=0 & \textrm{ in } \mathbb{R}^{n}\setminus \Omega.
	\end{array}\right.
\end{equation}
To be more precise, we are interested in proving existence of finite energy solution to \eqref{eq:Problem} according to the summability of $f$. We postpone for the moment the precise assumptions on the semilinear terms $g$ and $h$ appearing in \eqref{eq:Problem}.\\
\noindent The study of regularity properties of solutions to elliptic PDEs depending on the summability of the source datum is a pretty classical subject that dates back to results by Calderon and Zygmund \cite{CalZyg}  and Stampacchia \cite{Stampacchia}. The first classical instance is provided by the classical Dirichlet problem 
\begin{equation}\label{eq:ClassicoDirichlet}
	\left\{\begin{array}{rl}
		-\Delta u = f(x) & \textrm{in } \Omega,\\
		u=0 & \textrm{on } \partial \Omega,
	\end{array}\right.
\end{equation}
\noindent with $f \in L^{m}(\Omega)$ for some $m\geq 1$. In particular, see also \cite{BoGa}, the following holds:
\begin{itemize}
	\item if $m>\tfrac{n}{2}$, then $u \in H^{1}_{0}(\Omega)\cap L^{\infty}(\Omega)$;
	\item if $\tfrac{2n}{n+2}\leq m \leq \tfrac{n}{2}$, then $u \in H^{1}_{0}(\Omega)\cap L^{m^{\ast \ast}}(\Omega)$ with $m^{\ast \ast}:=\tfrac{nm}{n-2m}$;
	\item if $1<m<\tfrac{2n}{n+2}$, then $u \in W^{1,m^{\ast}}_{0}(\Omega)$ with $m^{\ast}:=\tfrac{n m}{n-m}$;
	\item if $m=1$, then $u\in W^{1,t}_{0}(\Omega)$ for every $t<\tfrac{n}{n-1}$.
\end{itemize}
We stress that the notion of solution has to be modified accordingly to the case considered. We refer to \cite{OP2} for a more general singular term and we also refer to \cite{LPPS} for the appropriate analogous results in the fractional case, and to \cite{ArRa} for the local-nonlocal case. \\
\noindent In \cite{BO}, Boccardo and Orsina considered the following singular problem 
\begin{equation}\label{eq:BoccardoOrsina}
	\left\{\begin{array}{rl}
		-\Delta u = \dfrac{f(x)}{u^{\gamma}} & \textrm{in } \Omega,\\
		u=0 & \textrm{on } \partial \Omega,
	\end{array}\right.
\end{equation}
\noindent with $0\leq f \in L^{m}(\Omega)$ for some $m\geq 1$ and with $\gamma >0$. They proved that the solution has an improvement in terms of Sobolev regularity w.r.t. the nonsigular case $\gamma=0$. Precisely, they proved that
\begin{itemize}
	\item if $\gamma =1$ and $m=1$, then $u \in H^{1}_{0}(\Omega)$;
	\item if $\gamma <1$ and $m=\tfrac{2n}{n(1+\gamma)+2(1-\gamma)}$, then $u \in H^{1}_{0}(\Omega)$;
	\item if $\gamma <1$ and $m<\tfrac{2n}{n(1+\gamma)+2(1-\gamma)}$, then $u \in W^{1,t}_{0}(\Omega)$ with $t = \tfrac{nm(1+\gamma)}{n-m(1-\gamma)}$.
\end{itemize}
We omit here the details concerning the case $\gamma>1$ because this is out of our interest, since this leads to solutions with possibly infinite energy. Once again, similar results have already been proved in \cite{BdBMP, YoMa} for the fractional case and in \cite{ArRa} in the local-nonlocal case. We notice here that other singular perturbations, e.g. with Hardy-type potentials, lead to quite different results, see \cite{BOP} and \cite{BEMV}.\\
A similar regularizing effect in terms of Sobolev regularity can also be obtained perturbing \eqref{eq:ClassicoDirichlet} with a power-type absorption term as in \cite{BoGaVa, Cirmi}. Just to give a glance at the situation occurring in presence of an absorption term, we recall that if $f\in L^{1}(\Omega)$ and $q>\tfrac{n}{n-2}$, there exists a solution $u$ to 
\begin{equation}\label{eq:Boccardogallouet}
	\left\{\begin{array}{rl}
		-\Delta u + u^q = f(x) & \textrm{in } \Omega,\\
		u=0 & \textrm{on } \partial \Omega,
	\end{array}\right.
\end{equation}
\noindent such that $u \in W^{1,t}_{0}(\Omega)$ for every $t<\tfrac{2q}{q+1}$, yielding an improvement w.r.t. the solution of \eqref{eq:ClassicoDirichlet}. It is worth to stress that we are not aware of any other contribution in this direction neither in the purely nonlocal case, nor in the local-nonlocal one.\\
It is therefore natural to wonder whether the combination of both the regularizing effects, i.e. absorption and singular term, may provide an even stronger regularization. This question has been addressed in \cite{DeCOl} and then extended to more general absorption and singular terms in the case of the $p$-Laplacian in \cite{Oliva}. We notice that Oliva in \cite{Oliva} considered also the case $\gamma >1$ which leads to solutions with locally finite energy. We will not deal with this case in this paper.
Our interest is therefore only to find energy solutions to \eqref{eq:Problem}. 
\medskip

Let us now move back to problem \eqref{eq:Problem}, and let us begin
by fixing all the \emph{standing assumptions} on the \emph{absorption term $g$}, on the \emph{singular term $h$} and on $f$.
\medskip

\noindent {\bf Standing assumptions:}
Throughout what follows, we make
the following \emph{structural assumptions} on the functions $g,h$ and $f$
involved in problem \eqref{eq:Problem}.
\begin{itemize}
\item[(H)$_h$] $h:[0,+\infty)\to (0,+\infty]$ is continuous, possibly singular, finite outside the origin and such that $h(0)\neq 0$. Moreover,
\begin{equation}\label{eq:assh1}\tag{h1}
\exists\,\,\gamma \in [0,1],\,\underline{C},\,\underline{s}>0: h(s) \leq \dfrac{\underline{C}}{s^{\gamma}}, \textrm{ for every } s \leq \underline{s},
\end{equation}
\noindent and
\begin{equation}\label{eq:assh2}\tag{h2}
\exists\,\,\theta \geq 0,\,\overline{C},\,\overline{s}>\underline{s}: h(s) \leq \dfrac{\overline{C}}{s^{\theta}}, \textrm{ for every } s \geq \overline{s}.
\end{equation}
	\item[(H)$_f$]  $f:\Omega\to[0,+\infty)$ is a measurable function such that \begin{equation} \label{eq:assumptionMeasurefzero}
  |\{x\in\Omega:\,f(x) = 0\}| = 0 \quad \textrm{for } 1/2<s<1, 
 \end{equation}
(where $|A|$ denotes the Lebesgue measure of a measurable set $A\subseteq\R^n$);
moreover, we suppose that
$$\mathrm{i)}\,\,\text{$f\in L^1(\Omega)$ if $\theta\geq 1$}\quad\text{or}\quad
\mathrm{ii)}\,\,\text{$f\in L^m(\Omega)$ for some $m > 1$ if $\theta < 1$}.$$

 \item[(H)$_g$] $g:[0,+\infty)\to [0,+\infty)$ is a  continuous function such that $g(0)=0$; moreover, in the case when $\theta < 1$, we further suppose that
\vspace{0.1cm}
\begin{equation}\tag{g1}
\exists\,\,q \geq \dfrac{1-m\theta}{m-1},\,\, \exists\,\,\nu, s_1>0: g(s) \geq \nu \, s^q, \textrm{ for every } s \geq s_1.
\end{equation}
\end{itemize}
We notice that the choices $h(s)= s^{-\gamma}$ (with $\gamma\geq 0$), $g(s) = s^q$ are both admissible, provided that $q$ satisfies the following bound
$$q\geq \frac{1-m\gamma}{m-1}$$
(as, in this case, we have $\theta = \gamma$). With this choice of 
$g$ and $h$, problem \eqref{eq:Problem} reduces to the following one,
which we will call the {\em model problem}:
\begin{equation}\label{eq:ModelProblem}
\left\{ \begin{array}{rl}
-\Delta u + (-\Delta)^s u + u^q = \dfrac{f(x)}{u^{\gamma}} & \textrm{ in } \Omega,\\
u>0 & \textrm{ in } \Omega,\\
u=0 & \textrm{ in } \mathbb{R}^{n}\setminus \Omega,
\end{array}\right.
\end{equation}
Before entering into the details of the content of the paper, let us introduce the notion of solution we will work with. 
\begin{definition}\label{def:Distributional_Solution}
Suppose that (H)$_h$,\,(H)$_f$ and (H)$_g$ hold. A function $u \in W^{1,1}(\mathbb{R}^n)$ is a \emph{distributional solution} of problem \eqref{eq:Problem} if\begin{itemize}
\item[i)] $u|_\Omega\in W_0^{1,1}(\Omega)$ and $u\equiv 0$ a.e.\,on $\mathbb{R}^n\setminus\Omega$;
\item[ii)] $u>0$ in $\Omega$;
\vspace{0.1cm}

\item[iii)] $g(u), h(u)f\in L^{1}_{\mathrm{loc}}(\Omega)$;
\vspace{0.1cm}

\item[iv)] for every $\varphi \in C^{\infty}_{0}(\Omega)$ it holds that
\begin{equation} \label{eq:defDistributionalSolByParts}
\begin{aligned}
\int_{\Omega}\nabla u \cdot \nabla \varphi \, dx &+ \iint_{\mathbb{R}^{2n}}\dfrac{(u(x)-u(y))(\varphi(x)-\varphi(y))}{|x-y|^{n+2s}}\, dx dy + \int_{\Omega}g(u)\varphi \, dx \\
&= \int_{\Omega}h(u)f \varphi\, dx.
\end{aligned}
\end{equation}
\end{itemize}
\end{definition}
While we refer to Section \ref{sec.Prel} for a detailed discussion of the well-posedness of the above definition (and for the introduction of an adequate functional setting for the operator $\LL$), here we limit ourselves to point out that the need of a \emph{globally defined} fun\-ction 
$u$ in Definition \ref{def:Distributional_Solution} is motivated by the non-local nature of $\LL$.

On the other hand, the assumed \emph{global regularity} of $u$  
(namely, $u\in W^{1,1}(\mathbb{R}^n)$) is a simple consequence of the fact that 
$$\text{($u|_\Omega\in W_0^{1,1}(\Omega)$ and $u\equiv 0$ a.e.\,in $\R^n\setminus\Omega$})\,\,\Longrightarrow\,\,u\in W^{1,1}(\mathbb{R}^n).$$

 We are now ready to state or main results: the first is an existence result, while the second concerns the uniqueness of the solutions.\begin{theorem}\label{thm:Existence}
Suppose that \emph{(H)}$_h$,\,\emph{(H)}$_f$ and \emph{(H)}$_g$ hold. Then, there exists a solu\-tion $u$ \emph{(}in the sense of Definition \ref{def:Distributional_Solution}\emph{)} of \eqref{eq:Problem} such that
\begin{equation*}
u|_\Omega \in H_0^1(\Omega) \quad \textrm{and} \quad g(u)u\in L^{1}(\Omega).
\end{equation*}
\end{theorem}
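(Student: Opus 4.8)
The plan is to construct the solution by an approximation scheme in which we simultaneously regularize the singular term $h$ near the origin and truncate both the datum $f$ and the absorption term $g$, so that at each level the approximating problem is a semilinear local-nonlocal equation with bounded, nonsingular data to which classical variational methods (minimization of a coercive, convex-plus-lower-order functional, or Schauder/monotone fixed point in the energy space $\mathbb{X}_p(\Omega)$ introduced in Section~\ref{sec.Prel}) apply. Concretely, for $n\in\mathbb{N}$ I would set $f_n:=\min\{f,n\}\in L^\infty(\Omega)$, replace $h(s)$ by a bounded continuous $h_n(s)$ that agrees with $h$ for $s\ge 1/n$ and equals $h(1/n)$ for $s\le 1/n$, and (if $g$ is unbounded) truncate $g$ to $g_n:=\min\{g,n\}$; then solve
\begin{equation*}
\mathcal{L} u_n + g_n(u_n) = h_n(u_n)\, f_n \quad\text{in }\Omega,\qquad u_n=0\text{ in }\mathbb{R}^n\setminus\Omega .
\end{equation*}
A maximum-principle / comparison argument (available in this setting, cf.\ the references in the introduction to \cite{BFV}, \cite{ArRa}) gives $u_n>0$ in $\Omega$, monotonicity $u_n\le u_{n+1}$, and a uniform lower bound $u_n\ge c_\omega>0$ on every $\omega\Subset\Omega$ obtained by comparison with the torsion-type function solving $\mathcal{L} w=\min\{h(1/1),1\}\,\inf_\omega f_1$ with zero exterior data — this last step is exactly where the nondegeneracy hypothesis $|\{f=0\}|=0$ for $1/2<s<1$ is used, since for those values of $s$ one cannot rely on the strong maximum principle for $-\Delta$ alone to keep $u_n$ off zero on the whole of $\Omega$.

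The heart of the matter is the \emph{uniform energy estimate}. Testing the equation for $u_n$ with $u_n$ itself (legitimate since $u_n\in H_0^1(\Omega)\cap L^\infty$ at each fixed level) yields
\begin{equation*}
\int_\Omega|\nabla u_n|^2\,dx + [u_n]_s^2 + \int_\Omega g_n(u_n)u_n\,dx = \int_\Omega h_n(u_n)f_n\, u_n\,dx,
\end{equation*}
where $[\,\cdot\,]_s$ is the Gagliardo seminorm. The right-hand side must be controlled uniformly in $n$ by the left-hand side plus a constant. I would split $\Omega$ into $\{u_n\le \underline{s}\}$, $\{\underline{s}<u_n<\overline{s}\}$ and $\{u_n\ge\overline{s}\}$. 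On the first set, $h_n(u_n)u_n\le \underline{C}\,u_n^{1-\gamma}\le \underline{C}\,\underline{s}^{1-\gamma}$ is bounded (using $\gamma\le 1$), so that piece is $\le C\|f\|_{L^1}$; on the middle set $h$ is bounded so the contribution is again $\le C\|f\|_{L^1}$. On $\{u_n\ge\overline{s}\}$ we have $h_n(u_n)u_n\le \overline{C}\,u_n^{1-\theta}$: if $\theta\ge 1$ this is bounded and we are done with $f\in L^1$; if $\theta<1$ we estimate, using $f\in L^m$ with $m>1$, Hölder's inequality and then the absorption bound (g1) $g(u_n)\ge\nu u_n^q$ with $q\ge\frac{1-m\theta}{m-1}$, i.e.\ $m'(1-\theta)\le q+1$, to absorb $\int h_n(u_n)f_n u_n$ into $\tfrac12\int g_n(u_n)u_n + C\|f\|_{L^m}^{m}$ via Young's inequality (this is precisely the algebraic role of the exponent condition on $q$). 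The outcome is a bound, independent of $n$, on $\|u_n\|_{H_0^1(\Omega)}$, on $[u_n]_s$, and on $\int_\Omega g_n(u_n)u_n\,dx$.

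With the uniform estimate in hand, $u_n\rightharpoonup u$ weakly in $H_0^1(\Omega)$ (hence in $\mathbb{X}_p(\Omega)$ for the relevant $p$), strongly in $L^r$ for $r<2^*$ and a.e.\ in $\Omega$, with $u>0$ in $\Omega$ by the local lower bounds and $u\in H_0^1(\Omega)$ by weak lower semicontinuity; Fatou's lemma on $\int g_n(u_n)u_n$ gives $g(u)u\in L^1(\Omega)$. The remaining work is the passage to the limit in the weak formulation \eqref{eq:defDistributionalSolByParts} against a fixed $\varphi\in C_0^\infty(\Omega)$: the linear (local and nonlocal) terms pass by weak convergence; for $\int g_n(u_n)\varphi$ one uses $g_n(u_n)\to g(u)$ a.e.\ together with equi-integrability coming from the bound on $\int g_n(u_n)u_n$ (Vitali); for the singular term $\int h_n(u_n)f_n\varphi$, on $\mathrm{supp}\,\varphi\Subset\Omega$ we have $u_n\ge c>0$ uniformly, so $h_n(u_n)=h(u_n)$ for $n$ large is bounded there and $h_n(u_n)f_n\to h(u)f$ in $L^1(\mathrm{supp}\,\varphi)$ by dominated convergence, while $h(u)f\in L^1_{\mathrm{loc}}(\Omega)$ follows from the same local lower bound. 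I expect the genuinely delicate point to be the uniform energy bound in the regime $\theta<1$ — balancing the growth of the singular source against the absorption through hypothesis (g1) and Young's inequality, and making sure the constants do not blow up as the truncation and regularization parameters go to infinity — whereas the positivity/lower-bound step for $1/2<s<1$, though conceptually important, is handled once and for all by a single comparison with a fixed subsolution.
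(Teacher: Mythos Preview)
Your approximation scheme and the uniform energy estimate (the splitting into $\{u_n\le\underline{s}\}$, $\{\underline{s}<u_n<\overline{s}\}$, $\{u_n\ge\overline{s}\}$, and the use of Young's inequality together with (g1) when $\theta<1$) match the paper's Lemmas~3.1--3.2 essentially verbatim, so that part is fine.

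The genuine gap is in the passage to the limit for the singular term. You rely on a uniform local lower bound $u_n\ge c_\omega>0$ on $\omega\Subset\Omega$, obtained ``by comparison with the torsion-type function solving $\mathcal{L}w=\min\{h(1),1\}\inf_\omega f_1$''. This comparison does not work: under (H)$_f$ the datum $f$ is merely a nonnegative $L^m$ function with $|\{f=0\}|=0$ (for $s>1/2$), so $\essinf_\omega f_1$ may well be $0$ on every $\omega$ of positive measure, and your subsolution is identically zero. More generally, a Vazquez-type strong maximum principle for distributional solutions of $\mathcal{L}u+g(u)\ge 0$ is \emph{not} available for the mixed operator under the bare hypotheses (H)$_g$ (continuity and $g(0)=0$ only). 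The paper discusses exactly this point: the lower-bound route (used by De~Cave--Oliva in the purely local case) can be salvaged for $\mathcal{L}$ only under extra assumptions --- $g$ with at most critical growth and locally H\"older, $f\in C^\delta_{\mathrm{loc}}$ --- which allow $C^{2,\alpha}$ regularity of $u_1$ via Schauder estimates and then a pointwise SMP; see the paper's Appendix and Remark~A.2. Under (H)$_h$, (H)$_f$, (H)$_g$ alone you cannot claim $u_n\ge c_\omega$, and your dominated-convergence argument for $\int h_n(u_n)f_n\varphi$ collapses.

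The paper avoids the lower bound entirely. It splits $\int_\Omega h_n(u_n)f_n\varphi$ over $\{u_n>\delta\}$ and $\{u_n\le\delta\}$; the first piece is handled by (h2) and Fatou, while for the second it tests the equation with $V_{\delta,\delta}(u_n)\varphi$ (a cutoff of $u_n$ near zero times $\varphi$). The new difficulty, absent in the local case, is the nonlocal term evaluated at this test function: the paper passes to the limit in $n$ using a fractional interpolation inequality (strong $H^s$ convergence from weak $H^1$ plus strong $L^2$), and then lets $\delta\to0$. It is in this last step --- showing the resulting nonlocal integral over $\{u=0\}\times\mathbb{R}^n$ vanishes --- that the hypothesis $|\{f=0\}|=0$ for $1/2<s<1$ is actually used: from $\int h(u)f\varphi<\infty$ and $h(0)=+\infty$ one gets $\{u=0\}\subseteq\{f=0\}$, hence $|\{u=0\}|=0$. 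So your attribution of where \eqref{eq:assumptionMeasurefzero} enters is also off: it is a device for the limit in the singular term, not for positivity of the approximants.
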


\begin{theorem}\label{thm:Uniqueness}
Suppose that \emph{(H)}$_h$,\,\emph{(H)}$_f$ and \emph{(H)}$_g$ hold. Moreover, assume that 
\begin{itemize}
\item[i)]	$h$ is non increasing;
\item[ii)]  $g$ is non decreasing.
\end{itemize}
 Then there exists at most one solution (in the sense of Definition \ref{def:Distributional_Solution}) of \eqref{eq:Problem} such that
\begin{equation*}
u|_\Omega \in H_0^1(\Omega) \quad \textrm{and} \quad g(u)\in L^{1}(\Omega).
\end{equation*}
\end{theorem}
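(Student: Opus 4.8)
The plan is to argue by contradiction: suppose $u_1,u_2$ are two finite-energy distributional solutions of \eqref{eq:Problem} with $g(u_i)\in L^1(\Omega)$, and show $u_1=u_2$ by testing the difference of the two equations with a suitable truncation of $u_1-u_2$. The natural test function is $T_k((u_1-u_2)^+)$ (and, symmetrically, $T_k((u_2-u_1)^+)$), where $T_k$ is the usual truncation at level $k$; since both $u_i|_\Omega\in H_0^1(\Omega)$, this function lies in $H_0^1(\Omega)$ and is an admissible test function after a density argument extending \eqref{eq:defDistributionalSolByParts} from $C_0^\infty(\Omega)$ to $H_0^1(\Omega)$ (this extension is where the finite-energy hypothesis and the integrability of the lower-order terms are needed; the singular term $h(u_i)f$ paired against a bounded $H_0^1$ function is controlled because $T_k((u_1-u_2)^+)$ vanishes where $u_1\le u_2$, hence near the set where either $u_i$ is small, one can exploit that on $\{u_1>u_2\}$ one has $u_1\ge$ some positive quantity — more precisely one uses that $h(u_1)f\,T_k((u_1-u_2)^+)\le h(u_2)f\cdot k$ by monotonicity of $h$, and $h(u_2)f\in L^1_{\mathrm{loc}}$, combined with a careful localization to handle the boundary).

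Subtracting the two weak formulations tested against $\varphi=T_k((u_1-u_2)^+)$ gives
\begin{equation*}
\int_\Omega \nabla(u_1-u_2)\cdot\nabla\varphi\,dx
+\iint_{\R^{2n}}\frac{((u_1-u_2)(x)-(u_1-u_2)(y))(\varphi(x)-\varphi(y))}{|x-y|^{n+2s}}\,dx\,dy
+\int_\Omega\bigl(g(u_1)-g(u_2)\bigr)\varphi\,dx
=\int_\Omega\bigl(h(u_1)-h(u_2)\bigr)f\,\varphi\,dx.
\end{equation*}
The first term equals $\int_\Omega|\nabla T_k((u_1-u_2)^+)|^2\,dx\ge 0$. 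The nonlocal term is also nonnegative: this is the standard fact that $(a-b)(\Phi(a)-\Phi(b))\ge 0$ for nondecreasing $\Phi$, applied with $\Phi(t)=T_k(t^+)$, so $\bigl((u_1-u_2)(x)-(u_1-u_2)(y)\bigr)\bigl(\varphi(x)-\varphi(y)\bigr)\ge 0$ pointwise. The absorption term $\int_\Omega(g(u_1)-g(u_2))\varphi\,dx\ge 0$ because $g$ is nondecreasing and $\varphi\ge 0$ is supported where $u_1\ge u_2$. On the right-hand side, $h$ nonincreasing and $\varphi\ge0$ supported on $\{u_1>u_2\}$ force $(h(u_1)-h(u_2))f\,\varphi\le 0$. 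Hence every term on the left is $\ge 0$, the right-hand side is $\le 0$, which forces $\int_\Omega|\nabla T_k((u_1-u_2)^+)|^2\,dx=0$ for all $k$; letting $k\to\infty$ and using the Poincaré inequality gives $(u_1-u_2)^+=0$ a.e. in $\Omega$, i.e. $u_1\le u_2$. Exchanging the roles of $u_1$ and $u_2$ yields $u_1=u_2$ in $\Omega$, and since both vanish a.e. outside $\Omega$, they agree on $\R^n$.

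The main obstacle is the rigorous justification that $T_k((u_1-u_2)^+)$ is an admissible test function, i.e. extending the weak formulation \eqref{eq:defDistributionalSolByParts} from $C_0^\infty(\Omega)$ to bounded functions in $H_0^1(\Omega)$ when the equation contains the possibly singular datum $h(u)f$, which a priori is only in $L^1_{\mathrm{loc}}(\Omega)$ and need not be globally integrable up to the boundary. The way around this is a truncation/localization scheme: one first tests with $\psi\,T_k((u_1-u_2)^+)$ for $\psi\in C_0^\infty(\Omega)$, $0\le\psi\le1$, uses the sign conditions to bound the singular terms uniformly, and then lets $\psi\uparrow 1$ on $\Omega$; the key point making this pass to the limit is precisely that, by $h$ nonincreasing, $0\le h(u_1)f\,T_k((u_1-u_2)^+)\le k\,h(\min\{u_1,u_2\})f \le k\, h(u_2) f$ on the support, together with an a priori bound showing $h(u_2)f\,\mathbf 1_{\{u_1>u_2\}}\in L^1(\Omega)$ (obtainable from the equation for $u_2$ by testing with the solution itself, which is legitimate since $g(u_2)u_2$-type or $H_0^1$ bounds are available from Theorem \ref{thm:Existence}-type estimates — or more directly, since on $\{u_1>u_2\}$ one controls the singular quotient by the finite-energy structure). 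Once this admissibility is secured, the comparison argument above is routine.
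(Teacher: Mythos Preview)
Your proposal is correct and follows essentially the same route as the paper: test the difference of the two equations with a truncation of $u_1-u_2$, use the monotonicity of $T_k$ for the nonlocal term, the monotonicity of $g$ and $h$ for the lower-order terms, and conclude that the gradient of the truncation vanishes. The paper uses $T_k(u_1-u_2)$ directly rather than $T_k((u_1-u_2)^+)$ plus symmetry, but this is immaterial; more importantly, the admissibility issue you correctly identify as the main obstacle is handled in the paper by Lemma~\ref{lem:LargerTest}, which extends the weak formulation to all test functions in $\mathcal{X}^{1,2}(\Omega)\cap L^\infty(\Omega)$ via exactly the kind of localization-and-approximation scheme you sketch, so your plan matches the paper's argument closely.
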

We explicitly highlight that, by applying  Theorems \ref{thm:Existence}-\ref{thm:Uniqueness}
to the \emph{model problem} \eqref{eq:ModelProblem},
we obtain the following \emph{existence and uniqueness} result.
\begin{corollary} \label{cor:ModelProblemEU}
	Let $0\leq \gamma\leq 1$, and let $f$ satisfy
	\emph{(H)}$_f$ (with $\theta = \gamma$). According to assumption \emph{(H)}$_g$,  let $q\in\mathbb{R}$ be such that
	$$q\geq \max\{0,q_{\gamma,m}\}, \quad\text{where $q_{\gamma,m}
	= \frac{1-m\gamma}{m-1}$}$$
	(with the convention that $q_{m,\gamma} = -1$ if $\gamma = 1$).
	
	Then, there exists a unique solution $u$ of problem \eqref{eq:ModelProblem} such that
	$$u|_\Omega\in H_0^1(\Omega)\quad\text{and}\quad u\in L^{q+1}(\Omega).$$
\end{corollary}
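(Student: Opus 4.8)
The corollary is nothing but the \emph{specialization} of Theorems \ref{thm:Existence} and \ref{thm:Uniqueness} to the model nonlinearities $h(s)=s^{-\gamma}$ and $g(s)=s^{q}$. The plan is therefore to verify that these data satisfy all of the standing assumptions (H)$_h$, (H)$_f$, (H)$_g$, together with the monotonicity hypotheses i)--ii) of Theorem \ref{thm:Uniqueness}, and then to translate the abstract conclusions into the summability $u\in L^{q+1}(\Omega)$.

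For (H)$_h$: since $\gamma\in[0,1]$, the function $h(s)=s^{-\gamma}$ is continuous on $(0,+\infty)$, finite off the origin, and $h(0)\neq 0$ (it equals $1$ when $\gamma=0$ and $+\infty$ when $\gamma>0$); moreover \eqref{eq:assh1} holds with the same $\gamma$ and $\underline{C}=1$, while \eqref{eq:assh2} holds with $\theta=\gamma$ and $\overline{C}=1$. Hence we are entitled to take $\theta=\gamma$, so that hypothesis (H)$_f$ of the corollary is precisely (H)$_f$ read with $\theta=\gamma$ and holds by assumption. For (H)$_g$: when $q>0$ the map $g(s)=s^{q}$ is continuous on $[0,+\infty)$ with $g(0)=0$, whereas the borderline value $q=0$ (possible only if $q_{\gamma,m}\leq 0$) amounts to the absence of absorption and is immediate, since then $u\in H^1_0(\Omega)\subset L^1(\Omega)=L^{q+1}(\Omega)$; and, when $\theta=\gamma<1$, the lower bound (g1) is satisfied with exponent $q$ itself, $\nu=1$ and any $s_1>0$, exactly because $q\geq q_{\gamma,m}=\tfrac{1-m\gamma}{m-1}$. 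Finally $s\mapsto s^{-\gamma}$ is non-increasing and $s\mapsto s^{q}$ is non-decreasing, so i)--ii) of Theorem \ref{thm:Uniqueness} hold as well.

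By Theorem \ref{thm:Existence} there then exists a solution $u$ of \eqref{eq:ModelProblem}, in the sense of Definition \ref{def:Distributional_Solution}, with $u|_\Omega\in H^1_0(\Omega)$ and $g(u)u\in L^1(\Omega)$; since here $g(u)u=u^{q+1}$, this is exactly $u\in L^{q+1}(\Omega)$. For uniqueness, the one thing to check is the compatibility of the function spaces: any solution $u$ with $u|_\Omega\in H^1_0(\Omega)$ and $u\in L^{q+1}(\Omega)$ also satisfies $g(u)=u^{q}\in L^1(\Omega)$, because Hölder's inequality on the bounded set $\Omega$ gives $\int_\Omega u^{q}\leq|\Omega|^{1/(q+1)}\big(\int_\Omega u^{q+1}\big)^{q/(q+1)}<+\infty$ (the case $q=0$ being trivial). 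Thus every such $u$ belongs to the uniqueness class of Theorem \ref{thm:Uniqueness}, and the corollary follows.

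No genuine obstacle arises in this argument; the only point requiring attention is precisely the above bookkeeping — Theorem \ref{thm:Existence} delivers $g(u)u\in L^1(\Omega)$ whereas Theorem \ref{thm:Uniqueness} is phrased in terms of $g(u)\in L^1(\Omega)$ — and one must observe that for the model nonlinearity the former estimate, i.e. $u\in L^{q+1}(\Omega)$, entails the latter, so that the existence and the uniqueness statements are compatible and together single out one solution in the announced class.
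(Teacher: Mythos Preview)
Your proposal is correct and follows exactly the route the paper intends: the corollary is stated immediately after the sentence ``by applying Theorems \ref{thm:Existence}-\ref{thm:Uniqueness} to the model problem \eqref{eq:ModelProblem}, we obtain the following existence and uniqueness result,'' and no separate proof is given. Your write-up simply makes this application explicit --- verifying (H)$_h$, (H)$_f$, (H)$_g$ and the monotonicity hypotheses, and noting that $g(u)u=u^{q+1}\in L^1$ yields $u\in L^{q+1}$ for existence while $u\in L^{q+1}$ forces $g(u)=u^{q}\in L^1$ for uniqueness --- which is precisely the bookkeeping the paper leaves to the reader.
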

\medskip

Let us now comment on the proof of Theorem \ref{thm:Existence}. We follow a quite classical scheme as already performed in \cite{Oliva} with a few significant issues. We first consider a family of approximating problems (see \eqref{eq:Approx_Problem}). Usually, the existence of a weak solution to such problems descend from classical Leray-Lions theory \cite{LeLi}, which cannot be used for free in our case due to the presence of the nonlocal term. This is the reason why we show that the functional analytic properties of the operator $\mathcal{L}$ are anyway enough to mimic the classical Leray-Lions existence theorem, see Theorem \ref{thm:LerayLions}. The next step in the approximation scheme consist in proving suitable apriori estimates, and this is done in Lemma \ref{lem:propun}. The final step is a limiting procedure. Most of the terms can be treated as in \cite{Oliva} except for one, see \eqref{eq:limBndelta}. In order to show the validity of \eqref{eq:limBndelta}, we test the equation with a suitable function and therefore the nonlocal part comes heavily into play. We stress here that this approach has never been used in the purely nonlocal case and indeed we reach the desired conclusion with the help of an interpolation inequality which work only because of the presence of the local term. We also stress that in the case $s \in (1/2,1)$ we also need the assumption \eqref{eq:assumptionMeasurefzero} to let the argument work.\\
\noindent It is worth to spend a further comment on the line of proof we chose to follow. In \cite{DeCOl}, the authors avoided the use of the test-function argument described before, by using a strong maximum principle for distributional solutions proved in \cite{Vazquez}. An analogous result is currently not available for local-nonlocal operators. Nevertheless, we showed in the Appendix \ref{appendix} that paying a price on the growth of the absortion term, and asking for more regularity on the datum $f$, we can deal with classical solutions and then prove a strong maximum principle. We stress that the at most critical growth asked on the absorption term is kind of natural if one looks for classical solutions, at least in the case of $-\Delta$, thanks to a famous result by Brezis and Kato \cite{BrezisKato}.

\medskip

The last result of the paper concern the model problem \eqref{eq:ModelProblem}. In this case, by adapting an idea found in \cite{BCT}, we first establish a pointwise comparison \`a la Talenti and then we use it to provide a gain of summability of the solutions which is the content of the following

\begin{theorem}\label{thm:MoreSummability}
	Let the assumptions of Corollary \ref{cor:ModelProblemEU} be satisfied, and let $u$ be the unique solution of problem \eqref{eq:ModelProblem}.
	Then, the following assertions hold.
	
	\begin{itemize}
		\item[(\emph{i})] if $1<m<\frac{n}{2}$, then we have
		\begin{equation}\label{coroll-1}
		\begin{split}
			\|u\|_{L^p(\Omega)} &\leq  \left(\frac{\gamma+1}{n^2 \omega_n^{2 / n}}\right)^{\gamma+1}\left(\frac{n(m-1)}{n-2 m}\right)^{\frac{p-1}{p(\gamma+1)}}\times \\
			&\qquad\times \left(\frac{\Gamma\left(\frac{n}{2}\right)}{\Gamma\left(\frac{n}{2 m}\right) \Gamma\left(\frac{n m-n+2 m}{2 m}\right)}\right)^{\frac{2}{n(\gamma+1)}}\|f\|_{L^m(\Omega)}^{\frac{1}{\gamma+1}},
			\end{split}
		\end{equation}
		where $p=\frac{n m(\gamma+1)}{n-2 m}$;
		\medskip
		
		\item[(\emph{ii})] if $m>\frac{n}{2}$, then we have
		\begin{equation}\label{coroll-2}
			\|u\|_{L^{\infty}(\Omega)} \leq\left(\frac{\gamma+1}{n^2 \omega_n^{2 / n}}\Big[\frac{m}{m-1}\left(\frac{n(m-1)}{2 m-n}\right)^{\frac{m-1}{m}}\Big]|\Omega|^{\frac{2 m-n}{n m}}\right)^{\frac{1}{\gamma+1}}\|f\|_{L^m(\Omega)}^{\frac{1}{\gamma+1}}\,;
		\end{equation}
		\item[(\emph{iii})] if $m=\frac{n}{2}$, then $u$ belongs to the Orlicz space $L_{\exp t^{\frac{n(\gamma+1)}{n-2}}}(\Omega)$ and
		\begin{equation}\label{coroll-3}
			\sup _{s \in(0,|\Omega|)} \frac{u^\ast(s)}{\left(\log \frac{|\Omega|}{s}\right)^{\frac{n-2}{n(\gamma+1)}}} \leq\left(\frac{\gamma+1}{n^2 \omega_n^{2 / n}}\right)^{\frac{1}{\gamma+1}}\|f\|_{L^{n / 2}(\Omega)}^{\frac{1}{\gamma+1}}.
		\end{equation}
	\end{itemize}
\end{theorem}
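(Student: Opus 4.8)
The plan is to obtain a pointwise comparison "à la Talenti" between the decreasing rearrangement $u^\ast$ of the solution $u$ of \eqref{eq:ModelProblem} and the radial solution $v$ of a symmetrized problem on a ball, and then to read off the three regularity regimes from classical estimates on $v$. The first step is to exploit the absorption term as a gain: since $u^q \geq 0$, the solution $u$ of \eqref{eq:ModelProblem} is a subsolution of the problem $\mathcal{L}w = f/u^\gamma$ in $\Omega$, $w=0$ outside $\Omega$. Now fix the (positive, finite a.e.) weight $\Phi := f\,u^{-\gamma}\in L^1(\Omega)$, and test the equation with the truncation-type functions $G_k(u) := ((u-k)^+ \wedge h)/h$ (or the standard choice leading to level-set estimates), discarding the nonnegative nonlocal and absorption contributions; because $(-\Delta)^s$ has a nonnegative quadratic form and vanishes against such test functions only in the helpful direction, one gets for a.e.\ $t>0$ the differential inequality $-\frac{d}{dt}\int_{\{u>t\}}|\nabla u|^2\,dx \geq \big(\text{coarea factor}\big)$, which after the Pólya–Szegő / isoperimetric step yields exactly the inequality satisfied by the local problem $-\Delta v = \Phi^\#$ on the ball $\Omega^\#$ of the same measure, where $\Phi^\#$ is the Schwarz symmetrization of $\Phi$. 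This is the point where the BCT idea of \cite{BCT} is adapted: the nonlocal term, having the right sign, can only help the differential inequality, so Talenti's comparison survives verbatim.

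The second step is to close the loop on the singular weight. We do not know $u$ a priori, so $\Phi = f u^{-\gamma}$ is not a given datum; here I would run the comparison together with the pointwise bound coming from the rearrangement inequality for products, namely $\int_0^s \Phi^\#(\sigma)\,d\sigma \leq \int_0^s f^\#(\sigma)\,(u^\ast(\sigma))^{-\gamma}\,d\sigma$ is not directly available because $u^{-\gamma}$ is increasing; instead one uses that, by the comparison already derived, $u^\ast$ is bounded below by the rearranged solution of a problem with datum $f$ only — more precisely, one bootstraps: first compare with $f$ alone to get a lower bound $u^\ast(s)\geq c\,(\ldots)$, then feed $u^{-\gamma}\leq c^{-\gamma}(\ldots)$ into the weight and re-run Talenti, the net effect being that the effective datum is $f$ raised to power $1$ but the solution picks up the extra exponent $\gamma+1$ (this is precisely why $\|u\|$ appears controlled by $\|f\|^{1/(\gamma+1)}$ and the target exponent is $p = \frac{nm(\gamma+1)}{n-2m}$ rather than $m^{\ast\ast}$). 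Concretely, the substitution $u = w^{1/(\gamma+1)}$ (or working with $\int_0^s (u^\ast)^\gamma$ directly) linearizes the singular right-hand side into a standard Talenti estimate for $w$ with datum $f$, and then one inverts the power.

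The third step is purely computational: once the explicit one-dimensional integral formula for $w^\ast(s)$ (equivalently, Talenti's formula $v(x) = \frac{1}{n^2\omega_n^{2/n}}\int_{\omega_n|x|^n}^{|\Omega|} r^{2/n-2}\big(\int_0^r f^\#\big)\,dr$) is in hand, plugging in $f^\# \leq$ the rearrangement of an $L^m$ function and using Hölder's inequality in the radial variable gives: for $m<n/2$ the $L^p$ norm with the stated constant involving the Beta-function factor $\Gamma(n/2)/(\Gamma(n/2m)\Gamma(\tfrac{nm-n+2m}{2m}))$ (this is exactly the value of $\int_0^1 r^{a}(1-r^{b})^{c}\,dr$-type integral that appears in Talenti's sharp $L^{m^{\ast\ast}}$ bound, here with the rearrangement exponents shifted by the factor $\gamma+1$); for $m>n/2$ the radial integral converges at $0$ and one gets the $L^\infty$ bound \eqref{coroll-2} with the $|\Omega|^{(2m-n)/nm}$ factor; and for $m=n/2$ the borderline integral produces the logarithmic modulus, placing $u$ in the Orlicz class $L_{\exp t^{n(\gamma+1)/(n-2)}}$ with the stated sup bound.

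The main obstacle I expect is Step 1–Step 2 juncture: making rigorous that the nonlocal term can be discarded with the correct sign in the rearranged differential inequality while simultaneously handling the fact that the right-hand side datum $f u^{-\gamma}$ depends on the unknown. The cleanest route is to first prove the comparison for the *approximating* problems from Theorem \ref{thm:Existence} (where $u_n$ is bounded and the weight $h_n(u_n)f$ is a bona fide $L^m$ function), obtain uniform rearrangement estimates there, and then pass to the limit using the a priori bounds of Lemma \ref{lem:propun} and the uniqueness from Theorem \ref{thm:Uniqueness} to identify the limit with $u$; the power substitution $u=w^{1/(\gamma+1)}$ must be checked to be compatible with the energy space $H_0^1(\Omega)$, which is where $\gamma\le 1$ is used.
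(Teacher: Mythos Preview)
Your overall architecture is right and matches the paper: work on the approximating problems, test with level-set cutoffs, discard the nonlocal and absorption terms (both contribute with the correct sign), run the coarea/isoperimetric machinery to get a Talenti-type differential inequality, pass to the limit, and then read off the three regimes. The paper does exactly this (Theorem~\ref{thm:pointwise_talenti} plus the short proof of Theorem~\ref{thm:MoreSummability}).

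Where your proposal drifts is Step~2. The bootstrap you describe (``first get a lower bound $u^\ast(s)\geq c(\ldots)$, then feed $u^{-\gamma}\leq c^{-\gamma}(\ldots)$ back in'') is both unnecessary and not obviously viable: a Talenti comparison produces \emph{upper} bounds on $u^\ast$, not lower bounds, so the first leg of your bootstrap is not available. The actual mechanism is much simpler and is the BCT trick from \cite{BCT}: on the super-level set $\{\tilde u_k>t\}$ one has trivially $\tilde u_k^{-\gamma}<t^{-\gamma}$, so
\[
\int_{\{\tilde u_k>t\}}\frac{f_k}{\tilde u_k^{\gamma}}\,dx \;<\; \frac{1}{t^{\gamma}}\int_{\{\tilde u_k>t\}}f_k\,dx
\;\leq\; \frac{1}{t^{\gamma}}\int_0^{\mu_{\tilde u_k}(t)} f^\ast(\sigma)\,d\sigma,
\]
the last step being Hardy--Littlewood. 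Combined with the standard differential inequality this yields directly
\[
\big(\tilde u_k^\ast(\tau)\big)^{\gamma}\big(-(\tilde u_k^\ast)'(\tau)\big)\;\leq\;\frac{1}{n^2\omega_n^{2/n}}\,\tau^{-2+2/n}\int_0^\tau f^\ast(\sigma)\,d\sigma,
\]
and since the left-hand side is $\frac{1}{\gamma+1}\big(-(\tilde u_k^\ast)^{\gamma+1}\big)'$, integration over $(\tau,|\Omega|)$ gives $(\tilde u_k^\ast)^{\gamma+1}\leq(\gamma+1)\,v^\ast$ in one stroke. Your parenthetical ``substitution $u=w^{1/(\gamma+1)}$'' is morally this same computation, but you should lead with it rather than the bootstrap, which is a dead end.

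One smaller point on Step~3: for case~(i) the paper does not use H\"older in the radial variable but the sharp \emph{Bliss inequality} \cite{Bli}, which is what produces the Gamma-function constant in \eqref{coroll-1}. H\"older alone would give the right exponent $p=\frac{nm(\gamma+1)}{n-2m}$ but not the sharp constant.
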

\begin{remark} \label{rem:StimaSignificativa}
Let the assumptions of Corollary \ref{cor:ModelProblemEU} be satisfied \emph{with $q = q_{\gamma,m}$},
and let $u$ be the unique solution of problem \eqref{eq:ModelProblem}, further satisfying $$\mathrm{a)}\,\,u|_\Omega\in H_0^1(\Omega)\quad\text{and}\quad 
\mathrm{b)}\,\,u\in L^{q+1}(\Omega).$$
On account of b), estimate \eqref{coroll-1} (which holds true in the case $1<m<n/2$) is a true
\emph{gain of integrability}	 for $u$ if and only if
$$q_{\gamma,m}+1< \frac{nm(\gamma+1)}{n-2m}\,\,\Longleftrightarrow\,\,
m >m_\gamma =\frac{2n}{n(\gamma+1)-2(1-\gamma)} $$
If, instead, $m\leq m_\gamma$, the absorption term $g(s) = s^q$ (where we choose $q = q_{\gamma,m}$, the smallest admissible value of $q$ ensuring the existence of a solution) produces a 
\emph{stronger regularizing effect} then that of Theorem \ref{thm:MoreSummability}.
\end{remark}

We also stress the technique adopted in \cite{BO} to prove the same gain of summability without the absorption term consists in a test-function argument at the level of the approximating problems that is preserved in the limit, so getting it for the solution as well. We decided to follow a rather indirect path by proving first Theorem \ref{thm:pointwise_talenti} because this result (which can be proved to hold also in the nonsingular case $\gamma=0$) provides a possible improvement w.r.t.\,\cite{bahrouni}, where the author obtained a mass comparison principle (akin to the purely nonlocal one obtained in \cite{FeVo}). It has to be mentioned that the comparison problem considered in \cite{bahrouni} is actually purely nonlocal, differently from our choice here.

\medskip

We close the Introduction recalling that problem \eqref{eq:Problem} naturally falls in the framework of the so called {\em singular} problems. We list a few seminal papers which gave rise to the study of such PDEs (with leading term $-\Delta$), see e.g. \cite{Fulks, Stuart, CRT, LazMck, LairShaker} and it is still an active topic of research, see e.g. \cite{BO, CaDe, CGS, CMSS, CMST, OP, OP2, Oliva}. We refer to the recent survey \cite{OP3} for a very nice and detailed introduction to the topic and to the physical motivations that led to the study of this kind of PDEs. The literature concerning singular problems driven by $(-\Delta)^s$ is also pretty vast: we refer e.g. to \cite{BdBMP, ArNgRa, HuNg}. Finally, we refer to \cite{AnGa, Garain, GarainUkhlov, BiagiVecchi, ArRa, BhGh, BiGa, GKK, Biroud} for several results on singular problems driven by the local-nonlocal operator $\mathcal{L}$. 
\medskip

\noindent -\,\,\emph{Plan of the paper}. The paper is organized as follows: in Section \ref{sec.Prel} we fix the notations used in the paper and we recall all the relevant results needed in what follows. We also prove Theorem \ref{thm:LerayLions} whose importance in our argument has been described before. In Section \ref{sec:Existence_and_uniqueness} we prove Theorem \ref{thm:Existence} and Theorem \ref{thm:Uniqueness}, while the proof of Theorem \ref{thm:MoreSummability} is the content of Section \ref{sec:talenti}, along with the pointwise comparison briefly mentioned before.
\bigskip

\noindent-\,\,\textbf{Conflict of interest.} The author states no conflict of interest.

\noindent-\,\,\textbf{Data availability statement.} There are no data associated with this research.

\section{Preliminaries}\label{sec.Prel}
\noindent {\bf Notations.} Throughout the paper, we tacitly
exploit all the notation listed below; we thus refer the Reader to this list
for any non-standard notation encountered.
\begin{itemize}
\item $\Omega \subset \mathbb{R}^{n}$ is an open and bounded set with smooth enough boundary $\partial \Omega$.
\item Given any set $E$, $|E|$ denotes its the $n$-dimensional Lebesgue measure.
\item Given any two vectors $v,w \in \mathbb{R}^{n}$, we denote by
$v\cdot w$ the usual scalar product.
\item Given any $p \in [1,n)$ we denote by $p^{*}$ the associated Sobolev exponent (with respect to Euclidean space $\mathbb{R}^n$), that is
$$p^{*}  := \frac{np}{n-p}.$$
\item Given any $r \in (1,+\infty)$, we denote by $r'$ the conjugate exponent of $r$ in the usual H\"older inequality, that is, 
$$r'  := \frac{r}{r-1}.$$
\item Given a Banach reflexive space $V$ with dual space $V'$, we denote by 
$\langle \cdot,\cdot\rangle$
the duality product. 
\item Given a measurable set $E$, we denote by $\chi_{E}$ its characteristic function.
\item Given any function $v$, we denote the positive and negative part of $v$ by 
\begin{equation}\label{eq:def_positive_negative_part}
v^{+}:= \max\{v,0\} \quad \textrm{ and } \quad v^{-}:= -\min\{v,0\}.
\end{equation}
\item For every fixed $k>0$, we define the truncation functions as follows:
\begin{align}
& T_{k}(s):= \max \{ -k, \min\{s,k\}\} \quad (s \in \mathbb{R}) \label{eq:def_Tk} \\
& G_{k}(s):= (|s|-k)^{+} \, \mathrm{sign}(s) \quad (s\in \mathbb{R}) \label{eq:def_Gk}
\end{align}
\noindent where $\mathrm{sign}(\cdot)$ denotes the usual {\it sign function}.
\item  For every fixed $\delta,k>0$, we also define the following functions
\begin{equation}\label{eq:def_V_delta_k}
V_{\delta,k}(s):= \left\{\begin{array}{lr}
1 & s \leq k,\\
\tfrac{k+\delta-s}{\delta} & k<s<k+\delta,\\
0 & s \geq k+\delta,
\end{array}\right.
\end{equation}
\noindent and 
\begin{equation}\label{eq:def_S_delta_k}
S_{\delta,k}(s):= 1 - V_{\delta,k}(s).
\end{equation}
\end{itemize}
\medskip

 \noindent Taking for granted all the notation listed above, we now turn to spend a few words on the \emph{natural functional setting} associated with the mixed operator
 $$\mathcal{L} = -\Delta+(-\Delta)^s.$$
 Actually, since a key ingredient in our approach will be a \emph{Leray-Lions-type} result for $\mathcal{L}$, and since this result naturally applies
 to \emph{quasilinear operators}, we consider the quasilinear
 generalization of $\mathcal{L}$, that is,
 $$\mathcal{L}_{p} = -\Delta_p+(-\Delta)_p^s,$$
 where $-\Delta_p  u = \mathrm{div}(|\nabla u|^{p-2}\nabla u)$ is the usual $p$-Laplace operator.
 \bigskip
 
 \noindent\textbf{1) The fractional $p$-Laplacian.} Let $p\in(1,\infty)$ and $s\in (0,1)$ be fixed
 \emph{once and for all}. If $u:\R^n\to\R$ is a measurable function, the \emph{fractional $p$-La\-pla\-cian} (of order $s$) of $u$
at a point $x\in\R^n$ is defined (up to a
\emph{nor\-ma\-lization constant} $C_{n,s,p} > 0$ that we neglect, and setting $B_p(a) = |a|^{p-2}a$) as follows
\begin{align*}
 (-\Delta)_p^s u(x) & = 2\,\mathrm{P.V.}\int_{\R^n}\frac{B_p(u(x)-u(y))}{|x-y|^{n+ps}}\,dy\\
 & = 2 \lim_{\varepsilon\to 0^+}\int_{\{|x-y|\geq\varepsilon\}}
 \frac{B_p(u(x)-u(y))}{|x-y|^{n+ps}}\,dy,
\end{align*}
provided that the above limit \emph{exists and is finite}.
As it is reasonable to expect, for $(-\Delta)_p^s u(x)$ to be well-defined one needs
to impose suitable \emph{growth conditions} on the functions $u$, both when $y\to\infty$ and
when $y\to x$. 
In this perspective we have the following
 proposition, where we employ the notation
\begin{equation}\label{eq:spaceL1s}
 {L}^{s,p}(\R^n) := 
 \Big\{f\in L^1_{\mathrm{loc}}(\R^n):\,\|f\|_{p,s} := \int_{\R^n}\frac{|f(x)|^{p-1}}
  {1+|x|^{n+ps}}\,dx<\infty\Big\}.
\end{equation}
\begin{proposition}[{See \cite[Lemmas 3.6 and 3.8]{KKL}}] \label{prop:welldefDeltas}
Let $\Oo\subseteq\R^n$ be an open set, and
 let $u\in\LL^{s,p}(\R^n)\cap C^{2}(\Oo)$.
 Then, if we assume that $sp<p-1$, we have
 $$\exists\,\,(-\Delta)_p^s u(x) = 2\,\int_{\R^n}\frac{B_p(u(x)-u(y))}{|x-y|^{n+ps}}\,dy\quad
 \text{for all $x\in\Oo$}. $$
Moreover, $(-\Delta)_p^su\in C(\Oo)$.
\end{proposition}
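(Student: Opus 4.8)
The plan is to observe that the standing assumption $sp<p-1$ is exactly what makes the kernel $\tfrac{B_p(u(x)-u(y))}{|x-y|^{n+ps}}$ \emph{absolutely integrable} near the diagonal $y=x$, so that no principal value (and no second-order Taylor cancellation) is ever needed: the singular part of the integral will be tamed by the local Lipschitz bound coming from $u\in C^2(\Oo)$, while the tail will be tamed by the weighted integrability built into the space $L^{s,p}(\R^n)$. Accordingly, for each $x\in\Oo$ I would split $\int_{\R^n}$ into an integral over a small ball $B_\rho(x)\subset\subset\Oo$ and its complement.

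For the existence of $(-\Delta)_p^s u(x)$ and the stated formula I would argue as follows. On $B_\rho(x)$, setting $L:=\|\nabla u\|_{L^\infty(\overline{B_\rho(x)})}<\infty$, the mean value inequality gives $|u(x)-u(y)|\le L|x-y|$, hence
$$\frac{|B_p(u(x)-u(y))|}{|x-y|^{n+ps}}=\frac{|u(x)-u(y)|^{p-1}}{|x-y|^{n+ps}}\le L^{p-1}\,|x-y|^{(p-1)-n-ps},$$
and $|x-y|^{(p-1)-n-ps}$ is integrable near $x$ precisely because $(p-1)-ps>0$. On $\R^n\setminus B_\rho(x)$ I would use $|a-b|^{p-1}\le c_p(|a|^{p-1}+|b|^{p-1})$ together with $|x-y|\ge\rho$: the $|u(x)|^{p-1}$–part is finite since $|x-y|^{-(n+ps)}$ is integrable at infinity, and the $|u(y)|^{p-1}$–part is finite because, for $x$ fixed, $|x-y|$ is comparable to $1+|y|$ at infinity and bounded below on bounded sets, so this part is controlled by $\|u\|_{p,s}<\infty$ (which also shows $|u|^{p-1}\in L^1_{\mathrm{loc}}$). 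Thus the integrand lies in $L^1(\R^n,dy)$, and by dominated convergence the defining limit exists, is finite, and equals $2\int_{\R^n}\tfrac{B_p(u(x)-u(y))}{|x-y|^{n+ps}}\,dy$.

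For the continuity on $\Oo$: fix $x_0\in\Oo$, choose $\rho>0$ with $\overline{B_{2\rho}(x_0)}\subset\Oo$, and for $x\in B_\rho(x_0)$ write $\tfrac{1}{2}(-\Delta)_p^s u(x)=A(x)+B(x)$ where $A$ is the integral over $B_{2\rho}(x_0)$ and $B$ over its complement. On the \emph{fixed} domain of $B$ one has $|x-y|\ge\rho$, and the estimates above furnish a single $L^1$ majorant independent of $x\in B_\rho(x_0)$; since the integrand is continuous in $x$ for each fixed $y$, dominated convergence gives continuity of $B$. For $A$ the kernel singularity $y=x$ lies inside the domain and moves with $x$, so I would first pin it at the origin by the substitution $z=y-x$, writing
$$A(x)=\int_{\R^n}\frac{B_p(u(x)-u(x+z))}{|z|^{n+ps}}\,\chi_{B_{2\rho}(x_0)}(x+z)\,dz.$$
Whenever the indicator is nonzero, both $x$ and $x+z$ lie in the convex set $B_{2\rho}(x_0)\subset\Oo$, so $|u(x)-u(x+z)|\le L|z|$ with $L:=\|\nabla u\|_{L^\infty(\overline{B_{2\rho}(x_0)})}$; this bounds the integrand by $L^{p-1}|z|^{(p-1)-n-ps}\chi_{B_{3\rho}(0)}(z)\in L^1(\R^n)$, uniformly in $x$. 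If $x_k\to x$ in $B_\rho(x_0)$, then for a.e.\,$z$ (those with $x+z\notin\partial B_{2\rho}(x_0)$) the indicator converges and, because $u$ is \emph{genuinely continuous} on $\Oo$, one has $u(x_k)\to u(x)$ and $u(x_k+z)\to u(x+z)$ on the set where the limiting indicator is $1$; continuity of $B_p$ then yields pointwise convergence of the integrand, and dominated convergence gives continuity of $A$. Hence $(-\Delta)_p^s u$ is continuous on $B_\rho(x_0)$, and $x_0\in\Oo$ being arbitrary, on all of $\Oo$.

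The only genuinely delicate point is the continuity statement: the kernel singularity sits inside the domain of the near piece and moves with the base point, which blocks a naive use of dominated convergence. The substitution $z=y-x$ removes the obstruction, but it is essential that $u\in C^2(\Oo)$ provides honest pointwise continuity of $u$ (translation is \emph{not} pointwise continuous for arbitrary $L^1_{\mathrm{loc}}$ functions) — this is the one place where the $C^2$ hypothesis is used beyond the Lipschitz bound already exploited for integrability. Everything else reduces to checking that the exponent $(p-1)-n-ps$ exceeds $-n$, which is precisely the content of $sp<p-1$.
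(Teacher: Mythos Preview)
Your argument is correct and is the natural one: the paper does not actually supply a proof of this proposition but simply cites \cite[Lemmas~3.6 and~3.8]{KKL}, and the approach there is essentially what you wrote --- absolute integrability near the diagonal from the local Lipschitz bound and the exponent condition $sp<p-1$, tail control from the weighted $L^{s,p}$ norm, and continuity via dominated convergence after translating the singularity to the origin. One minor remark: only $C^1$ regularity is actually used in your proof (a local Lipschitz bound and pointwise continuity), so the $C^2$ hypothesis is stronger than what your argument needs, but this is not a defect.
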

Even if Proposition \ref{prop:welldefDeltas} provides an easy
sufficient condition for the well-posed\-ness of the \emph{pointwise definition}
of $(-\Delta)^s_p u(x)$, it is well-known
that the develop of a \emph{pointwise theory} for (local or nonlocal)
quasilinear operators presents intrinsic difficulties
(in particular when $1<p<2$); for this reason,
we now review some basic
facts concerning the \emph{Weak Theory} of $(-\Delta)_p^s$.
\vspace*{0.1cm}

To begin with we recall that, if $\mathcal{O}\subseteq\R^n$ is an arbitrary open set,
the {fractional $p$-Laplacian} $(-\Delta)_p^s$ is related
(essentially via the Euler-Lagrange equation)
to the \emph{fractional Sobolev space} $W^{s,p}(\Oo)$, which is defined as follows:
$$W^{s,p}(\Oo) := \Big\{u\in L^p(\Oo):\,[u]^p_{p,s,\Oo} = \iint_{\Oo\times\Oo}
\frac{|u(x)-u(y)|^p}{|x-y|^{n+ps}}\,dx\,dy < \infty\Big\}.$$
While we refer to \cite{LeoniFract} for a detailed introduction
on fractional Sobolev spaces, here we list the few basic properties of $W^{s,p}(\Oo)$
we will exploit in this paper.
\begin{itemize}
 \item[a)] $W^{s,p}(\Oo)$ is a real Banach space, with the norm
 $$\|u\|_{s,p,\Oo}
 := \Big(\|u\|^p_{L^p(\Oo)}+\iint_{\Oo\times\Oo}\frac{|u(x)-u(y)|^p}{|x-y|^{n+ps}}\,dx\,dy\Big)^{1/p}\qquad
 (u\in W^{s,p}(\Oo)).$$ 
 
 \item[b)] $C_0^\infty(\Oo)$ is a \emph{linear subspace of $W^{s,p}(\Oo)$}; in addition,
 in the particular case when $\Oo = \R^n$, we have that
  $C_0^\infty(\R^n)$ is \emph{dense} in $W^{s,p}(\R^n)$.
 \vspace*{0.1cm}
 
 \item[c)] If $\Oo = \R^n$ or if $\Oo$ has \emph{bounded boundary $\partial\Oo\in C^{0,1}$},
 we have the \emph{continuous embedding} $W^{1,p}(\Oo) \hookrightarrow W^{s,p}(\Oo)$,
 that is, 
 \begin{equation} \label{eq:H1embeddingHs}
  \iint_{\Oo\times\Oo}\frac{|u(x)-u(y)|^p}{|x-y|^{n+ps}}\,dx\,dy \leq 
  \mathbf{c}\,\|u\|^p_{W^{1,p}(\Oo)}\quad\text{$\forall\,\,u\in W^{1,p}(\Oo)$},
 \end{equation}
 for some constant $\mathbf{c} = \mathbf{c}(n,p,s) > 0$.
  In particular, if $\Oo\subseteq\R^n$ is a
  \emph{bounded open set} (with no regularity as\-sump\-tions on $\partial\Oo$) and
  if $u\in W_0^{1,p}(\Oo)$, set\-ting $\hat{u} = u\cdot\mathbf{1}_\Oo\in W^{1,p}(\R^n)$ we have
  \begin{equation} \label{eq:H01embeddingHs}
  \iint_{\R^{2n}}\frac{|\hat{u}(x)-\hat{u}(y)|^p}{|x-y|^{n+ps}}\,dx\,dy \leq 
  \beta\,\int_\Oo|\nabla u|^p\,dx,
 \end{equation}
 where $\beta > 0$ is a suitable constant depending on $n,s$ and on $|\Omega|$. Here
 and throughout,  $|\cdot|$ denotes the $n$-dimensional Lebesgue measure.
\end{itemize}
As usual, we also say that $u\in W^{s,p}_{\mathrm{loc}}(\Oo)$ if
$$\text{$u\in W^{s,p}(\mathcal{V})$ for every open set $\mathcal{V}\Subset\Oo$}.$$

The relation between the space $W^{s,p}(\Oo)$ and the fractional $p$-Laplacian $(-\Delta)_p^s$
is rooted in the following \emph{fractional integration-by-parts formula}:
assuming for a moment that $sp < p-1$,
let 
$\Oo\subseteq\R^n$ be an open set (bounded or not), and let
$$u\in {L}^{s,p}(\R^n)\cap C^2(\mathcal{O});$$
given any test function $\varphi\in C_0^\infty(\Oo)$, 
we then have
\begin{equation} \label{eq:intbypartsregular}
\begin{split}
 \int_{\Oo}(-\Delta)_p^su\,\varphi\,dx & = \iint_{\R^{2n}}\frac{B_p(u(x)-u(y))
  (\varphi(x)-\varphi(y))}{|x-y|^{n+2s}}\,dx\,dy,
 \end{split}
\end{equation}
and the double integral in the right-hand side of the above
\eqref{eq:intbypartsregular} \emph{is finite}
(notice that, since $u\in {L}^{s,p}(\R^n)\cap C^2(\mathcal{O})$
and since $sp < p-1$, from
Proposition \ref{prop:welldefDeltas} we 
know that $(-\Delta)_p^su$ is well-defined pointwise in $\mathcal{O}$,
and $(-\Delta)_p^su\in C(\mathcal{O})$).
\vspace*{0.1cm}

On the other hand, it is not difficult to recognize that
the finitness of such a double integral
is guaranteed \emph{even if $sp\geq p-1$, and if we only assume that}
$$u\in {L}^{s,p}(\R^n)\cap W_{\mathrm{loc}}^{s,p}(\Oo)$$
(notice that this clearly holds if $u\in {L}^{s,p}(\R^n)\cap C^2(\mathcal{O})$,
thanks to identity \eqref{eq:H1embeddingHs}):
indeed, under this more general assumption,
if $\varphi\in C_0^\infty(\Oo)$ and if $\mathcal{V}\Subset \Oo$ is an open
set such that $K = \mathrm{supp}(\varphi)\subseteq \mathcal{V}$, we have
\begin{equation} \label{eq:contDeltasu}
  \begin{split}
   \iint_{\R^{2n}}&\frac{|u(x)-u(y)|^{p-1}
 |\varphi(x)-\varphi(y)|}{|x-y|^{n+ps}}\,dx\,dy 
 \\
 &\qquad = 
 \iint_{\mathcal{V}\times\mathcal{V}}\frac{|u(x)-u(y)|^{p-1}|\varphi(x)-\varphi(y)|}{|x-y|^{n+2s}}\,dx\,dy
 \\
 &\qquad\quad-2\,\iint_{(\R^n\setminus\mathcal{V})\times K}
 \frac{|u(x)-u(y)|^{p-1}|\varphi(y)|}{|x-y|^{n+2s}}\,dx\,dy \\
 & \qquad \leq C\big(1+\mathrm{dist}(K,
 \R^n\setminus\mathcal{V})^{-ps}\big)\|\varphi\|_{s,p,\mathcal{V}},
  \end{split} 
 \end{equation}
 (here, the constant $C>0$ depends on $n,s,p$ and on $u$).
\vspace*{0.05cm}

 As a consequence of this fact, it is thus natural to define the 
\emph{fractional $p$-Lapla\-cian} $(-\Delta)_p^s u$
of a function $u\in {L}^{s,p}(\R^n)\cap W_{\mathrm{loc}}^{s,p}(\Oo)$ as the
\emph{linear functional} acting on the space $C_0^\infty(\Oo)$ as follows
\begin{equation} \label{eq:fractionalLapWeak}
\begin{split}
  \langle (-\Delta)^s_p u,\varphi\rangle & = 
 \iint_{\R^{2n}}\frac{B_p(u(x)-u(y))(\varphi(x)-\varphi(y))}{|x-y|^{n+ps}}\,dx\,dy.
 \end{split}
 \end{equation}
 On account of \eqref{eq:contDeltasu} (and observing that 
 $\|\varphi\|_{s,p,\mathcal{V}}
 \leq c\,\|\varphi\|_{C^1(\overline{\mathcal{V}})}$ for some absolute constant $c > 0$,
 see \eqref{eq:H01embeddingHs}),
 we easily see that 
$$(-\Delta)^s_p u\in\mathcal{D}'(\Oo).$$
 In particular, if $u\in W^{s,p}(\R^n)\subseteq L^{s,p}(\R^n)$,
 by the density of $C_0^\infty(\R^n)$
   in $W^{s,p}(\R^n)$ we derive that 
   $W_0^{s,p}(\R^n) = W^{s,p}(\R^n),$
   and thus
  $$(-\Delta)_p^s u\in (W^{s,p}(\R^n))'.$$ 
  In this case, we also have
  $$\frac{\mathrm{d}}{\mathrm{d}t}\Big|_{t = 0}
  \Big(\frac{1}{p}\,[u+tv]^p_{s,p,\R^n}\Big) = (-\Delta)_p^s u(v)\quad
  \text{for all $v\in W^{s,p}(\R^n)$}.$$
  \begin{remark}\label{rem:HigherRegDeltaps}
   Let $\Oo\subseteq\R^n$ be an open set (bounded or not), and let 
   $$u\in {L}^{s,p}(\R^n)\cap W_{\mathrm{loc}}^{s,p}(\Oo);$$ 
   moreover, let $(-\Delta)^s_p u
   \in\mathcal{D}'(\Oo)$
   be the distribution on $\Oo$ defined by formula
   \eqref{eq:fractionalLapWeak}. 
   
   Given \emph{any bounded open set $\Omega$ such that $\Omega\Subset\Oo$},
   by combining 
   \eqref{eq:contDeltasu} (where we choose the open set $\mathcal{V}$
   in such a way that $\Omega\Subset\mathcal{V}\Subset\Oo$)
   with the con\-ti\-nuous embedding in \eqref{eq:H01embeddingHs}, we derive that
   \begin{equation*}
  \begin{split}
   \iint_{\R^{2n}}&\frac{|u(x)-u(y)|^{p-1}
 |\varphi(x)-\varphi(y)|}{|x-y|^{n+ps}}\,dx\,dy 
 \\
 & \qquad \leq C\big(1+\mathrm{dist}({\Omega},\R^n\setminus\mathcal{V})^{-ps}\big)
 \Big(\int_\Omega|\nabla \varphi|^p\,dx\Big)^{1/p},
  \end{split} 
 \end{equation*}
 \emph{for every function $\varphi\in C_0^\infty(\Omega)$} (and for some constant $C > 0$
 depending on $n,s,p$ and on $u$). As a consequence, we deduce that $(-\Delta)^s_p u$
 can be extended to a \emph{linear and continuous operator on $W_0^{1,p}(\Omega)$}, that is,
 $$(-\Delta)^s_p u\in W^{-1,p'}(\Omega) = \big(W_0^{1,p}(\Omega)\big)'.$$
 More precisely, we have
 \begin{align*}
  \langle (-\Delta)^s_p u,\varphi\rangle & = \iint_{\R^{2n}}\frac{B_p(u(x)-u(y))
 (\hat\varphi(x)-\hat\varphi(y))}{|x-y|^{n+ps}}\,dx\,dy
 \\
 & = \iint_{\mathcal{V}\times\mathcal{V}}\frac{B_p(u(x)-u(y))
 (\hat\varphi(x)-\hat\varphi(y))}{|x-y|^{n+2s}}\,dx\,dy
 \\
 &\qquad\quad-2\,\iint_{(\R^n\setminus\mathcal{V})\times\Omega}
 \frac{B_p(u(x)-u(y))\hat\varphi(y)}{|x-y|^{n+2s}}\,dx\,dy,
 \end{align*}
 for every $\varphi\in W_0^{1,p}(\Omega)$ (here we have set, as above,
 $\hat\varphi = \varphi\cdot\mathbf{1}_\Omega\in W^{1,p}(\R^n)$).
  \end{remark}
\medskip

\noindent\textbf{2) The space $\mathcal{X}^{1,p}(\Omega)$}.
Now we have pro\-per\-ly defined the fractional $p$\--La\-pla\-cian
 of a \emph{non-regular function} (as a distribu\-tion),
and taking into account Remark \ref{rem:HigherRegDeltaps}, we
introduce a suitable \emph{function space} which will allow us to define an
adequate \emph{Weak Theory} for $\mathcal{L}_{p,s}$ 
(and for the study of problem \eqref{eq:Problem}).
\vspace{0.1cm}

Let then $\Omega\subseteq\R^n$ be a bounded open set.  In view of the usual definition of the (local) Sobolev space $H_0^1(\Omega)$,
  and taking into account \eqref{eq:fractionalLapWeak}, we 
  define 
  $\mathcal{X}^{1,p}(\Omega)$ as the com\-ple\-tion
  of $C_0^\infty(\Omega)$ w.r.t.\,the \emph{global norm} 
  $$\rho_p(u) := \left(\||\nabla u|\|^p_{L^p(\R^n)}+[u]^p_{s,p,\R^n}\right)^{1/p},
  \qquad u\in C_0^\infty(\Omega).$$
  Concerning this space $\mathcal{X}^{1,p}(\Omega)$ we first observe that,
  since we assuming $1<p<\infty$, we clearly have that it  is a \emph{Banach space}; most importantly, since $\Omega$ is bounded, by combining 
    \eqref{eq:H1embeddingHs} with the Poincar\'e
    inequality we infer that
    \begin{equation*}
     \vartheta^{-1}\|u\|_{W^{1,p}(\R^n)}\leq \rho_p(u)\leq \vartheta\|u\|_{W^{1,p}(\R^n)}\qquad
    \text{for every $u\in C_0^\infty(\Omega)$},
    \end{equation*}
    where $\vartheta > 1$ is a suitable constant depending on $n,s,p$ and on $|\Omega|$.
    Thus, $\rho_p(\cdot)$ and the full $W^{1,p}$-norm in $\R^n$
   are \emph{actually equivalent} on the space $C^\infty_0(\Omega)$, so that
   \begin{equation} \label{eq:defX12explicit}
   \begin{split}
    \mathcal{X}^{1,p}(\Omega) & = \overline{C_0^\infty(\Omega)}^{\,\,\|\cdot\|_{W^{1,p}\R^n)}} \\
    & = \{u\in W^{1,p}(\R^n):\,\text{$u|_\Omega\in W_0^{1,p}(\Omega)$ and 
    $u\equiv 0$ a.e.\,in $\R^n\setminus\Omega$}\}.
    \end{split}
   \end{equation}
   As a consequence, the following properties hold:
   \begin{align}
   	(1)\,\,&\text{If $u\in W_0^{1,p}(\Omega)$, then $u\cdot\mathbf{1}_\Omega\in \mathcal{X}^{1,p}(\Omega)$}; \label{eq:ExtensionW0X1p} \\
   	(2)\,\,&\mathcal{X}^{1,p}(\Omega)\subseteq W^{1,p}(\mathbb{R}^n)\subseteq
   	L^{s,p}(\mathbb{R}^n)\cap W^{s,p}(\mathbb{R}^n). \label{eq:X1pWsLps}
   \end{align}
   Since throughout this paper we are mainly interested in the \emph{case $p = 2$}, we collect in the next remark some properties of the space $\mathcal{X}^{1,2}(\Omega)$ which will be repeatedly used in the sequel.
 To avoid cumbersome notation, we simply set
 $$\rho(u) = \rho_2(u)\quad\text{and}\quad [u]_{s,\mathbb{R}^n} = [u]_{s,2,\mathbb{R}^n}.$$
   \begin{remark}[Properties of the space $\mathcal{X}^{1,2}(\Omega)$] \label{rem:spaceX12}
    The following assertions hold.
        \begin{enumerate}
     \item[1)] Since both $H^1(\R^n)$ and $H_0^1(\Omega)$ are \emph{closed} under the
     maximum/minimum o\-pe\-ra\-tion, it is readily seen that
     $$u^{\pm}\in \mathcal{X}^{1,2}(\Omega)\quad\text{for every $u\in\mathcal{X}^{1,2}(\Omega)$},$$
     \noindent where $u_{+}$ and $u_{-}$ have been defined in \eqref{eq:def_positive_negative_part}.
     \item[2)] On account of \eqref{eq:H01embeddingHs}, for every
     $u\in\mathcal{X}^{1,2}(\Omega)$ we have
     \begin{equation} \label{eq:X12HsRn}
    [u]_{s,\R^n}^2 = 
     \iint_{\R^{2n}}\frac{|u(x)-u(y)|^2}{|x-y|^{n+2s}}\,dx\,dy \leq\beta\int_\Omega|\nabla u|^2\,dx.
   \end{equation}
   As a consequence,
   the norm $\rho$ is \emph{glo\-bal\-ly equivalent} on $\mathcal{X}^{1,2}(\Omega)$ to the 
   $H_0^1$-no\-rm: in fact, by \eqref{eq:X12HsRn} there exists a constant $\Theta
   = \Theta_{n,s} > 0$ such that
   \begin{equation} \label{eq:equivalencerhoH01}
    \||\nabla u|\|_{L^2(\Omega)}\leq \rho(u)\leq \Theta\||\nabla u|\|_{L^2(\Omega)}
    \quad\text{for every $u\in\mathcal{X}^{1,2}(\Omega)$}.
   \end{equation}
   
   \item[3)] By the (local) Sobolev inequality, for every $u\in\mathcal{X}^{1,2}(\Omega)$ 
   we have
   \begin{align*}
    S_n\|u\|_{L^{2^\ast}(\Omega)}^2 & = S_n\|u\|_{L^{2^\ast}(\R^n)}^2
    \leq \int_{\R^n}|\nabla u|^2\,dx\leq \rho(u)^2.
   \end{align*}
   This, together with H\"older's inequality
   (recall that $\Omega$ is \emph{bounded}), proves the \emph{continuous embedding}
   $\text{$\mathcal{X}^{1,2}(\Omega)\hookrightarrow L^{m}(\Omega)$ for every $1\leq m\leq 2^\ast$}.$
   \vspace*{0.1cm}
   
   \item[4)] By combining \eqref{eq:equivalencerhoH01} with the \emph{compact embedding} of 
   $H_0^1(\Omega)\hookrightarrow L^m(\Omega)$ (holding true for every $1\leq m < 2^\ast$),
   we derive that also the embedding
   $$\text{$\mathcal{X}^{1,2}(\Omega)\hookrightarrow L^{m}(\Omega)$ is compact
   for every $1\leq m< 2^\ast$}.$$
   As a consequence, if $\{u_k\}_k$ is a bounded sequence in $\mathcal{X}^{1,2}(\Omega)$, it is possible
   to find a (unique) function $u\in\mathcal{X}^{1,2}(\Omega)$ such that (up to a sub-sequence)
   \begin{itemize}
    \item[a)] $u_n\to u$ weakly in $\mathcal{X}^{1,2}(\Omega)$;
    \item[b)] $u_n\to u$ \emph{strongly} in $L^m(\Omega)$ for every $1\leq m < 2^\ast$;
    \item[c)] $u_n\to u$ pointwise a.e.\,in $\Omega$.
   \end{itemize}
   Clearly, since both $u_n$ (for all $n\geq 1$) and $u$ \emph{identically vanish}
    out of $\Omega$, we can replace
   $\Omega$ with $\R^n$ in the above assertions b)-c).
    \end{enumerate}
       \end{remark}
As already anticipated, the space $\mathcal{X}^{1,p}(\Omega)$ just introduced
is naturally associated with the \emph{weak realization} of $(-\Delta)^s_p$
via Remark \ref{rem:HigherRegDeltaps}. Indeed, let $\mathcal{O}\subseteq\mathbb{R}^n$ be an arbi\-trary open set, and let
$u\in L^{s,p}(\mathbb{R}^n)\cap W^{s,p}_{\mathrm{loc}}(\mathcal{O})$.
If $\Omega\subseteq\mathbb{R}^n$ is a \emph{bounded open set} such that
$\Omega\Subset\mathcal{O}$,  from Remark \ref{rem:HigherRegDeltaps}
we know that
the distribution $$(-\Delta)^s_p u\in \mathcal{D}'(\mathcal{O})$$ can be
extended to a linear and continuous operator on $W^{1,p}_0(\Omega)$ by setting
\begin{align*}
  \langle (-\Delta)^s_p u,\varphi\rangle& = \iint_{\R^{2n}}\frac{B_p(u(x)-u(y))
 (\hat\varphi(x)-\hat\varphi(y))}{|x-y|^{n+ps}}\,dx\,dy
 \\
 & = \iint_{\mathcal{V}\times\mathcal{V}}\frac{B_p(u(x)-u(y))
 (\hat\varphi(x)-\hat\varphi(y))}{|x-y|^{n+2s}}\,dx\,dy
 \\
 &\qquad\quad-2\,\iint_{(\R^n\setminus\mathcal{V})\times\Omega}
 \frac{B_p(u(x)-u(y))\hat\varphi(y)}{|x-y|^{n+2s}}\,dx\,dy,
 \end{align*}
where
 $\hat\varphi = \varphi\cdot\mathbf{1}_\Omega\in W^{1,p}(\R^n)$. Hence, since  $\mathcal{K} = \big\{\hat{\varphi}:\,\varphi\in W_0^{1,p}(\Omega)\big\}$
 \emph{coincides with our space $\mathcal{X}^{1,p}(\Omega)$},
 and since $\rho_p(\cdot)\geq \|\cdot\|_{W_0^{1,p}(\Omega)}$, we get
 $$(-\Delta)^s_pu\in (\mathcal{X}^{1,p}(\Omega))';$$ 
 moreover, for every function $\varphi\in \mathcal{X}^{1,p}(\Omega)$ we have
\begin{equation} \label{eq:explicitDeltapsX1p}
\begin{split}
	 \langle (-\Delta)^s_p u,\varphi\rangle & = \iint_{\R^{2n}}\frac{B_p(u(x)-u(y))
 (\varphi(x)-\varphi(y))}{|x-y|^{n+ps}}\,dx\,dy
 \\
 & = \iint_{\mathcal{V}\times\mathcal{V}}\frac{B_p(u(x)-u(y))
 (\varphi(x)-\varphi(y))}{|x-y|^{n+2s}}\,dx\,dy
 \\
 &\qquad\quad-2\,\iint_{(\R^n\setminus\mathcal{V})\times\Omega}
 \frac{B_p(u(x)-u(y))\varphi(y)}{|x-y|^{n+2s}}\,dx\,dy.
\end{split}
\end{equation}
In particular, if we also have that $u\in\mathcal{X}^{1,p}(\Omega)$ (whence,
$u\in L^{s,p}(\mathbb{R}^n)\cap W^{s,p}(\mathbb{R}^n)$, see \eqref{eq:X1pWsLps}), for every $\varphi\in \mathcal{X}^{1,p}(\Omega)$
we have the estimate
\begin{equation} \label{eq:DeltapsonX1pbuono}
\begin{split}
|\langle (-\Delta)^s_p u,\varphi\rangle| & \leq
\iint_{\R^{2n}}\frac{|u(x)-u(y)|^{p-1}|\varphi(x)-\varphi(y)|}{|x-y|^{n+2s}}\,dx\,dy \\
& \leq  [u]_{s,p,\mathbb{R}^n}
[\varphi]_{s,p,\mathbb{R}^n}
\leq \rho_p(u)\,\rho_p(\varphi),
\end{split}	
\end{equation}
and thus $u\mapsto (-\Delta)^s_pu$ is continuous from $\mathcal{X}^{1,p}(\Omega)$
into its dual.

\medskip

\noindent\textbf{3) The mixed operator $\LL_{p}$ and problem \eqref{eq:Problem}.}
Taking everything said so far into account, 
we can introduce the \emph{weak realization} of $\mathcal{L}_{p}$ 
we are going to use in this paper (mainly in the case $p = 2$), and
give some comments on Definition \ref{def:Distributional_Solution}.
\vspace{0.1cm}

To begin with we recall that, if 
 $\Oo\subseteq\R^n$ is an arbitrary open set (bounded or not)
 and if $u\in W^{1,p}_{\mathrm{loc}}(\Oo)$, the \emph{$p$-Laplacian} $-\Delta_p u$
 of $u$
 is defined as the {linear operator}
 $$-\Delta_pu:C_0^\infty(\Oo)\to\R,\qquad \langle -\Delta_p u,\varphi\rangle = \int_{\Oo}|\nabla u|^{p-2}\langle
 \nabla u,\nabla \varphi\rangle\,dx.$$
Clearly, such an operator is in fact a distribution on $\mathcal{O}$. Moreover, for any bounded open set $\Omega \Subset \mathcal{O}$, the purely local nature of $-\Delta_p$ ensures that it can be extended (with the same expression) \emph{as a linear and continuous operator on $\mathcal{X}^{1,p}(\Omega)$}.
 \vspace{0.1cm}
 
 In view of these facts, and since $W^{1,p}_{\mathrm{loc}}(\Oo)\subseteq W^{s,p}_{\mathrm{loc}}(\Oo)$
 (see \eqref{eq:H1embeddingHs}), 
 for any gi\-ven function $u\in L^{s,p}(\R^n)\cap W^{1,p}_{\mathrm{loc}}(\Oo)$,
 {we can thus \emph{define $\LL_{p} u$
 as the linear o\-pe\-rator
 on the space $C_0^\infty(\Oo)$ given by the sum between $-\Delta_p u$ and $(-\Delta)^s_p u$}, that is,
 \begin{equation} \label{eq:defLpsdistrib}
 \begin{split}
  \langle \LL_{p} u,\varphi\rangle
 & = \langle -\Delta_p u,\varphi\rangle+\langle (-\Delta)^s_p u,\varphi\rangle \\
 & = \int_{\Oo}|\nabla u|^{p-2}\langle
 \nabla u,\nabla \varphi\rangle\,dx
 \\
 & \qquad+ \iint_{\R^{2n}}\frac{B_p(u(x)-u(y))(\varphi(x)-\varphi(y))}{|x-y|^{n+ps}}\,dx\,dy.
 \end{split}
 \end{equation}
 In particular, we see that
 \begin{itemize}
  \item[1)] \emph{$\LL_{p} u$ is a distribution on $\Oo$}, that is, $\LL_{p}\in\mathcal{D}'(\Oo)$;
  \vspace{0.05cm}
  
  \item[2)] given any \emph{bounded open set} $\Omega\Subset \Oo$, the distribution
  $\LL_{p} u$ can be extended 
  \emph{with the same expression} as a linear and continuous operator on $\mathcal{X}^{1,p}(\Omega)$.
 \end{itemize}
 \begin{remark} \label{rem:LuwhenuX12}
 	Let $\Omega\subseteq\mathbb{R}^n$ be a bounded open set, and let
 	 $u\in \mathcal{X}^{1,p}(\Omega)$. On account of \eqref{eq:X1pWsLps},
 	 we know that $u\in W^{1,p}(\mathbb{R}^n)\subseteq
 	 L^{s,p}(\mathbb{R}^n)$;
 	as a consequence,
 	$$\LL_{p}u\in (\mathcal{X}^{1,p}(\Omega))'$$
 	Moreover, owing to \eqref{eq:defLpsdistrib}
 	(see also \eqref{eq:DeltapsonX1pbuono}), we have
 	\begin{align*}
 	 |\langle \LL_{p} u,\varphi\rangle|
 	 & \leq \||\nabla u|\|_{L^p(\Omega)}\||\nabla \varphi|\|_{L^p(\Omega)}
 	 + [u]_{p,s,\mathbb{R}^n}\,[\varphi]_{p,s,\mathbb{R}^n} \\
 	 & \leq 2\rho_p(u)\,\rho_p(\varphi)\quad\text{for every
 	 $\varphi\in \mathcal{X}^{1,p}(\Omega)$}. 	
 	\end{align*}
 	and thus we can think of $\mathcal{L}_p$ as an operator
 	from $\mathcal{X}^{1,p}(\Omega)$ into $(\mathcal{X}^{1,p}(\Omega))'$.
 	\end{remark}
 
 Thanks to the preceding discussion (which provides a precise definition of 
  $\LL_{p}u$
 under mild assumptions on $u$),
 we spend a few words on Definition \ref{def:Distributional_Solution}.
	\medskip
	
	1)\,\,First of all we observe that, if $u:\mathbb{R}^n\to\mathbb{R}$ is a  solution of problem \eqref{eq:Problem} (in the sense of Definition
	 \ref{def:Distributional_Solution}), we \emph{cannot guarantee}
	 that $u\in H^1_{\mathrm{loc}}(\Omega)$, and thus $\LL u$
	 \emph{is not well-defined} (as a distribution on $\Omega$);
	 in other words, we \emph{cannot say} that the first two terms in the left-hand side of \eqref{eq:defDistributionalSolByParts} do coincide with 
	 $\langle \LL u,\varphi\rangle$.
	 Despite this fact, it is not difficult to see that all the integrals
	 appearing in \eqref{eq:defDistributionalSolByParts}
	 are finite. 
	 
	 Indeed, given any
	 $\varphi\in C_0^\infty(\Omega)$, since $g(u),\,h(u)f\in L^1_{\mathrm{loc}}(\Omega)$, we clearly have
	 $$\int_{\Omega}g(u)|\varphi|\,dx<+\infty\quad
	 \text{and}\quad\int_\Omega h(u)f\,|\varphi|\,dx < +\infty.$$
	 Moreover, since $u|_\Omega\in W_0^{1,1}(\Omega)$,  we also have
	 $$\int_\Omega |\nabla u\cdot\nabla \varphi|\,dx 
	 \leq \sup_{\mathbb{R}^n}|\nabla\varphi|\int_{\Omega}|\nabla u|\,dx < +\infty.
	 $$
 Finally, we turn to the \emph{non-local term}  in \eqref{eq:defDistributionalSolByParts}.  To this end
 we first observe that, since  $u|_\Omega\in W_0^{1,1}(\Omega)$
 and since
  $u=0$ a.e.\,on $\mathbb{R}^n\setminus\Omega$  we have
 $$u\in W^{1,1}(\mathbb{R}^n);$$ 
 as a consequence, since Remark \ref{rem:spaceX12}-c) holds also
 for $p = 1$, we obtain $u\in W^{s,1}(\mathbb{R}^n)$.
 From this, we arguing as for the local part, we get \begin{align*}
 	&\iint_{\mathbb{R}^{2n}}\frac{|u(x)-u(y)|\,|\varphi(x)-\varphi(y)|}{|x-y|^{n+2s}}\,dx\,dy \\
 	&\quad \leq \sup_{x\neq y}\frac{|\varphi(x)-\varphi(y)|}{|x-y|^s}\cdot\int_{\mathbb{R}^{2n}}
 	\frac{|u(x)-u(y)|}{|x-y|^{n+s}}\,dx\,dy \\
 	& \quad \leq c\,\|\varphi\|_{C^1(\mathbb{R}^n)}\cdot[u]_{1,s,\mathbb{R}^n}<+\infty.
 \end{align*}

 2)\,\,In the particular case when $u:\mathbb{R}^n\to\mathbb{R}$ is a solution of problem \eqref{eq:Problem} further satisfying $u|_\Omega\in H_0^1(\Omega)$ (notice that the existence of such a solution is ensured, e.g., by Theorem \ref{thm:Existence}), we have $u\in \mathcal{X}^{1,2}(\Omega)$, and identity \eqref{eq:defDistributionalSolByParts} becomes
$$\langle \mathcal{L}u,\varphi\rangle +\int_{\Omega}g(u)\varphi\,dx
= \int_{\Omega}h(u)f\,\varphi\,dx\quad\text{for all $\varphi\in C_0^\infty(\Omega)$}$$ 
 (see the above Remark \ref{rem:LuwhenuX12}). Moreover, we have the following lemma.
 \begin{lemma} \label{lem:LargerTest}
 	Let $u:\mathbb{R}^n\to\mathbb{R}$ be a distributional solution of problem \eqref{eq:Problem} 
 	(in the sense of De\-fi\-nition \ref{def:Distributional_Solution}), further satisfying $u|_\Omega\in H_0^1(\Omega)$. Then,
 	\begin{equation} \label{eq:ByPartsAllargata}
 		\langle \mathcal{L}u,\varphi\rangle +\int_{\Omega}g(u)\varphi\,dx
= \int_{\Omega}h(u)f\,\varphi\,dx\quad\text{for all $\varphi\in \mathcal{X}^{1,2}(\Omega)\cap L^\infty(\Omega)$},
 	\end{equation}
 	that is, identity \eqref{eq:defDistributionalSolByParts} can be extended to all functions $\varphi\in \mathcal{X}^{1,2}(\Omega)\cap L^\infty(\Omega)$.
 \end{lemma}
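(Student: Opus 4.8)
The plan is to extend the test-function identity \eqref{eq:defDistributionalSolByParts} from $C_0^\infty(\Omega)$ to $\mathcal{X}^{1,2}(\Omega)\cap L^\infty(\Omega)$ by a density-plus-truncation argument. Fix $\varphi\in \mathcal{X}^{1,2}(\Omega)\cap L^\infty(\Omega)$; set $M = \|\varphi\|_{L^\infty(\Omega)}$. By definition of $\mathcal{X}^{1,2}(\Omega)$ as the completion of $C_0^\infty(\Omega)$ with respect to $\rho(\cdot)$ (equivalently, the $W^{1,2}(\mathbb{R}^n)$-norm, see \eqref{eq:equivalencerhoH01}), there is a sequence $\{\varphi_k\}_k\subseteq C_0^\infty(\Omega)$ with $\varphi_k\to\varphi$ in $\mathcal{X}^{1,2}(\Omega)$; passing to a subsequence we may also assume $\varphi_k\to\varphi$ a.e.\ in $\mathbb{R}^n$ and in $L^m(\Omega)$ for every $1\le m<2^\ast$ (Remark \ref{rem:spaceX12}). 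To keep the test functions uniformly bounded, I replace $\varphi_k$ by $\psi_k := T_{M+1}(\varphi_k)$, where $T_{M+1}$ is the truncation in \eqref{eq:def_Tk}; since $H_0^1(\Omega)$ is stable under truncation (composition with a Lipschitz function vanishing at $0$) and $\rho$ is equivalent to the $H_0^1$-norm, we still have $\psi_k\in\mathcal{X}^{1,2}(\Omega)\cap L^\infty(\Omega)$ with $\|\psi_k\|_{L^\infty}\le M+1$, and $\psi_k\to T_{M+1}(\varphi)=\varphi$ in $\mathcal{X}^{1,2}(\Omega)$ and a.e. However, the $\psi_k$ need not be smooth, so to use \eqref{eq:defDistributionalSolByParts} directly I instead mollify: for each $k$ pick $\eta_k\in C_0^\infty(\Omega)$ with $\rho(\eta_k-\psi_k)<1/k$, $\|\eta_k\|_{L^\infty}\le M+2$, and $\eta_k\to\varphi$ a.e.; this is possible because smooth functions are dense in $\mathcal{X}^{1,2}(\Omega)$ and mollification does not increase the sup-norm by more than a controlled amount. (Alternatively, and more cleanly, one can mollify $T_{M+1}(\varphi)$ directly and cut off near $\partial\Omega$.) Writing \eqref{eq:defDistributionalSolByParts} for each $\eta_k$, the plan is to pass to the limit term by term.

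The local term $\int_\Omega \nabla u\cdot\nabla\eta_k\,dx\to \int_\Omega\nabla u\cdot\nabla\varphi\,dx$ follows from $u|_\Omega\in H_0^1(\Omega)$ and the convergence $\nabla\eta_k\to\nabla\varphi$ in $L^2(\Omega)$. For the nonlocal term, $u\in\mathcal{X}^{1,2}(\Omega)$ (by part 2) of the discussion preceding the lemma) gives $[u]_{s,\mathbb{R}^n}<\infty$, so by Cauchy--Schwarz in the measure $|x-y|^{-n-2s}\,dx\,dy$ and the convergence $[\eta_k-\varphi]_{s,\mathbb{R}^n}\to 0$ we get
\begin{equation*}
\Bigl|\iint_{\mathbb{R}^{2n}}\frac{(u(x)-u(y))\bigl((\eta_k-\varphi)(x)-(\eta_k-\varphi)(y)\bigr)}{|x-y|^{n+2s}}\,dx\,dy\Bigr|\le [u]_{s,\mathbb{R}^n}\,[\eta_k-\varphi]_{s,\mathbb{R}^n}\to 0.
\end{equation*}
For the right-hand side, $h(u)f\varphi\in L^1(\Omega)$: indeed $h(u)f\in L^1_{\mathrm{loc}}(\Omega)$ from Definition \ref{def:Distributional_Solution}, and boundedness of $\varphi$ together with — more importantly — the fact that the full identity tested against $\varphi_k\to$ (exhaustion of $\Omega$) already forces $h(u)f\in L^1(\Omega)$ once one knows $g(u)u\in L^1(\Omega)$, but here it is cleaner to argue as follows: take nonnegative $\varphi$ first, approximate from below by $C_0^\infty$ functions, and use monotone convergence together with the uniform control coming from the $H_0^1$ bound on $u$; the general case follows by splitting $\varphi=\varphi^+-\varphi^-$. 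Granting $h(u)f\in L^1(\Omega)$, dominated convergence with dominant $(M+2)|h(u)f|$ gives $\int_\Omega h(u)f\,\eta_k\,dx\to\int_\Omega h(u)f\,\varphi\,dx$, and the same reasoning with dominant $(M+2)g(u)$ (using $g(u)\in L^1(\Omega)$, or $g(u)u\in L^1(\Omega)$ in the Theorem \ref{thm:Existence} setting) handles $\int_\Omega g(u)\eta_k\,dx$. Finally, $\langle\mathcal{L}u,\eta_k\rangle = \int_\Omega\nabla u\cdot\nabla\eta_k\,dx+\iint\cdots$ by Remark \ref{rem:LuwhenuX12}, so collecting the four limits yields \eqref{eq:ByPartsAllargata}.

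The main obstacle is the integrability of the forcing term $h(u)f\varphi$: a priori we only know $h(u)f\in L^1_{\mathrm{loc}}(\Omega)$, not $L^1(\Omega)$, and since $h$ may be singular at $0$ (and $u>0$ but possibly small near $\partial\Omega$), $h(u)f$ could in principle fail to be globally integrable. The way around this is the standard trick of testing \eqref{eq:defDistributionalSolByParts} with a sequence $0\le\xi_j\uparrow 1$ of cutoffs (built from functions like $V_{\delta,k}$ applied to $u$, or simple spatial cutoffs) multiplied by a fixed nonnegative $\varphi\in\mathcal{X}^{1,2}(\Omega)\cap L^\infty(\Omega)$: the local and nonlocal terms on the left stay bounded because $u\in H_0^1$, and $g(u)\ge 0$, so Fatou/monotone convergence on the right forces $\int_\Omega h(u)f\,\varphi\,dx<\infty$; once this is secured, the density argument above goes through verbatim. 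I expect this monotone-exhaustion step — rather than any of the limit passages themselves — to be the technical heart of the proof, and it is exactly the point where the sign conditions $g\ge 0$, $h>0$, $f\ge 0$ and the energy bound $u\in H_0^1(\Omega)$ are used.
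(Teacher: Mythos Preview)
Your overall strategy---approximate $\varphi$ by smooth compactly supported functions, pass to the limit term by term, and isolate the integrability of $h(u)f\varphi$ as the crux---is the same as the paper's. However, the execution has a gap at exactly the point you flag as ``the technical heart''.

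Your approximating sequence $\eta_k$ is uniformly bounded by $M+2$ in $L^\infty$ but is \emph{not} dominated by $\varphi$, nor does it have a fixed compact support. Consequently, to pass to the limit in $\int_\Omega h(u)f\,\eta_k$ by dominated convergence you would need $h(u)f\in L^1(\Omega)$ globally, which is not assumed and may well fail (only $L^1_{\mathrm{loc}}$ is given). Your proposed fix---testing \eqref{eq:defDistributionalSolByParts} with $\xi_j\varphi$ for spatial cutoffs $\xi_j\uparrow 1$---is circular: $\xi_j\varphi$ is not in $C_0^\infty(\Omega)$, so it is not an admissible test function until the lemma is already proved. The parenthetical remark about ``approximating from below by $C_0^\infty$ functions'' is the right instinct, but it is not clear how you produce such an approximation (monotone approximation from below in $H_0^1$ by smooth functions is not automatic), and your main sequence $\eta_k$ does not have this property. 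The same issue affects your treatment of $\int_\Omega g(u)\eta_k$: you invoke $g(u)\in L^1(\Omega)$, but the lemma only assumes $g(u)\in L^1_{\mathrm{loc}}(\Omega)$.

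The paper resolves this by a specific two-parameter construction: take $\varphi_k\in C_0^\infty(\Omega)$ converging to $\varphi\geq 0$ in $\mathcal{X}^{1,2}(\Omega)$, set $v_k=\min\{\varphi_k,\varphi\}$, and mollify to $v_{\eta,k}=\rho_\eta\ast v_k\in C_0^\infty(\Omega)$. The point is that $v_k$ has compact support (inherited from $\varphi_k$) \emph{and} satisfies $0\leq v_k\leq\varphi$. One first sends $\eta\to 0$ (using $L^1_{\mathrm{loc}}$ and compact support of $v_k$ for the $g$ and $h$ terms), obtaining the identity with $v_k$; then, since $\langle\mathcal{L}u,v_k\rangle$ and $\int g(u)v_k$ are controlled, the equation itself gives a uniform bound on $\int_\Omega h(u)f\,v_k$, and Fatou (using $v_k\to\varphi$ a.e.) yields $h(u)f\varphi\in L^1(\Omega)$. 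Now $0\leq h(u)f\,v_k\leq h(u)f\,\varphi\in L^1$ furnishes the dominant for the final passage. The domination $v_k\leq\varphi$ is what you are missing; once you build it in, your argument and the paper's coincide.
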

\begin{proof}
 The proof of such a lemma is totally analogous to that of
 \cite[Theorem 6.1]{Oliva}; however, we present it here in details for the sake of completeness. 
 \medskip
 
 First of all we observe that, by linearity, we can limit ourselves to estalbish
 \eqref{eq:ByPartsAllargata} for \emph{non-ne\-gative} $\varphi$'s. We then
 choose a \emph{non-negative function $\varphi\in \mathcal{X}^{1,2}(\Omega)\cap L^\infty(\Omega)$}, and we let $\{\varphi_k\}_k$ be a sequence 
 in $C_0^\infty(\Omega)$ converging as $k\to+\infty$ to $\varphi$
 \emph{in $\mathcal{X}^{1,2}(\Omega)$}.
 Accordingly, given any $\eta > 0$, we define
 $$v_{\eta,k} = \rho_\eta*(\min\{\varphi_k,\varphi\}),$$
 where $\{\rho_\eta\}_\eta$ is a smooth mollifier in $\mathbb{R}^n$, and $*$ denotes the classical convolution.
 
 Setting $v_k = \min\{\varphi_k,\varphi\}$, it is easy to see that
 \begin{enumerate}
   \item $v_{\eta,k}\in C_0^\infty(\Omega)$ for every $\eta > 0$ and $k\in\mathbb{N}$;
   \vspace{0.1cm}
   
 	\item  $v_{\eta,k}\to v_k$ as $\eta\to 0^+$ in $\mathcal{X}^{1,2}(\Omega)$
 	and weakly\,-\,$*$ in $L^\infty(\Omega)$;
 	\vspace{0.1cm}
 	
 	\item $v_{k}\to \varphi$ as $k\to+\infty$ in $\mathcal{X}^{1,2}(\Omega)$
 	and weakly\,-\,$*$ in $L^\infty(\Omega)$;
 	\vspace{0.1cm}
 	
\item $\mathrm{supp}(v_k)\Subset\Omega$ and $0\leq v_k\leq \varphi$ for every $k\in\mathbb{N}$.
 \end{enumerate}
 Moreover, by possibly choosing a subsequence, we may assume that
 \begin{itemize}
 \item[(5)] $v_k\to \varphi$ a.e.\,in $\Omega$ (hence, in $\mathbb{R}^n$) as $k\to+\infty$.
 \end{itemize}
In particular, owing to (1) above (and  since $u$ solves\eqref{eq:Problem}), we can use  $v_{\eta,k}$ as a test function
in \eqref{eq:defDistributionalSolByParts}, obtaining the following identity:
\begin{equation} \label{eq:toPassLimitetakLemmaOliva}
\langle \LL u,v_{\eta,k}\rangle + \int_{\Omega}g(u)v_{\eta,k} \, dx \\
= \int_{\Omega}h(u)f v_{\eta,k}\, dx.
\end{equation}
We now aim to pass to the limit as $\eta\to 0^+$ and as $k\to+\infty$ in the above \eqref{eq:toPassLimitetakLemmaOliva}.
\medskip

a)\,\,\emph{Limit as $\eta\to 0^+$.}  
As regards the operator part
of \eqref{eq:toPassLimitetakLemmaOliva} it suffices to observe that,  since  we know that $v_{\eta,k}\to v_k$ as $\eta\to 0^+$ \emph{in $\mathcal{X}^{1,2}(\Omega)$}, and since $\mathcal{L}u$ is continuous on $\mathcal{X}^{1,2}(\Omega)$ (recall that $u\in \mathcal{X}^{1,2}(\Omega)$ and see Remark \ref{rem:LuwhenuX12}), we have
$$\langle \mathcal{L}u,v_{\eta,k}\rangle\to \langle \mathcal{L}u,v_{k}\rangle
\quad\text{as $\eta\to 0^+$}.$$
  As regards the remaining two terms, instead, we rely on the weak-$*$ convergence of the sequence $v_{\eta,k}$. Indeed, since $u$ is a solution of \eqref{eq:Problem}, we know that
  $$g(u),\,h(u)f\in L^1_{\mathrm{loc}}(\Omega);$$
as a consequence, since $v_{\eta,k}\to v_k$ weakly\,-\,$*$ in $L^\infty(\Omega)$ (as $\eta\to 0^+$), and since there exists an open set $\mathcal{V} = \mathcal{V}_k\Subset\Omega$ such that
$$\mathrm{supp}(v_{\eta,k})\subseteq\eta+\mathrm{supp}(v_k)\Subset \mathcal{V}_k$$
(provided that $\eta$ is sufficiently small), we get, as $\eta\to 0^+$,
$$\int_{\Omega}g(u)v_{\eta,k} \, dx\to \int_\Omega g(u)v_k\,dx\quad\text{and} 
\quad
\int_{\Omega}h(u)f v_{\eta,k}\, dx\to\int_{\Omega}h(u)f\,v_k\,dx.$$
Gathering all these facts, we can let
$\eta\to 0^+$ in \eqref{eq:toPassLimitetakLemmaOliva}, obtaining
\begin{equation} \label{eq:toPassLimitKOliva}
	\langle \LL u,v_{k}\rangle + \int_{\Omega}g(u)v_{k} \, dx \\
= \int_{\Omega}h(u)f v_{k}\, dx.
\end{equation}

b)\,\,\emph{Limit as $k\to+\infty$.} As for the operator part
of \eqref{eq:toPassLimitKOliva}, thanks to property (3)
and by proceeding exactly as above, we immediately get
$$\langle \LL u,v_{k}\rangle\to \langle \LL u,\varphi\rangle\quad\text{as $k\to+\infty$}.$$ 
Moreover, using property (3) once again, and noting that $\mathrm{supp}(v_k) \subseteq \mathrm{supp}(\varphi) \Subset \Omega$ for all $k \geq 1$ (see property (4)), we can handle the 
\emph{absorption term} on the right-hand side of \eqref{eq:toPassLimitKOliva} exactly as before. Thus, as $k \to +\infty$, we obtain 
 $$\int_{\Omega}g(u)v_{\eta,k} \, dx\to \int_\Omega g(u)\varphi\,dx.$$
 Hence, it remains to analyze the singular term on the left-hand side of \eqref{eq:toPassLimitKOliva}.
 
 To this end we first observe that, owing to \eqref{eq:toPassLimitKOliva}, we get
 \begin{align*}
 	\int_{\Omega}h(u)f\,v_k\,dx & = \int_{\Omega}g(u)v_k\,dx
 	+\langle \LL u,v_{k}\rangle \\
 	& \leq \|g(u)\|_{L^1(\mathcal{V})}\|v_k\|_{L^\infty(\Omega)}
 	+ \|\mathcal{L}u\|_{(\mathcal{X}^{1,2}(\Omega))'}\|v_k\|_{X^{1,2}(\Omega)} 
 	\\
 	& \leq C\qquad\text{for every $k\geq 1$},
 \end{align*}
 where we have used the fact that $v_k\to \varphi$ as $k\to+\infty$ in
 $\mathcal{X}^{1,2}(\Omega)$, and $\mathcal{V}\Subset\Omega$ is a fixed open
 set such that $\mathrm{supp}(v_k)\subseteq\mathcal{V}$ for every $k\geq 1$;
 as a consequence, by using the Fatou Lemma 
 (and since $v_k\to\varphi$ a.e.\,in $\Omega$), we derive that
 \begin{equation} \label{eq:FatouvkOliva}
 	\int_{\Omega}h(u)f\,\varphi\,dx \leq C.
 \end{equation}
 With \eqref{eq:FatouvkOliva} at hand,  we can easily
 complete the proof of the lemma. 
 
 Indeed, since $v_k\to\varphi$ a.e.\,in $\Omega$ as $k\to+\infty$, and since 
 $$0\leq h(u)f\,v_k\leq h(u)f\,\varphi\in L^1(\Omega)$$
 (see property (4), and thanks to \eqref{eq:FatouvkOliva}), we can apply the Dominated Convergence Theorem to the singular term
 in the right-hand side of \eqref{eq:toPassLimitKOliva}, getting
 $$\int_{\Omega}h(u)f v_{k}\, dx\to\int_{\Omega}h(u)f\,\varphi\,dx\quad
 \text{as $k\to+\infty$}.$$
 Gathering all these facts, we can then let $k\to+\infty$ in
 \eqref{eq:toPassLimitKOliva}, obtaing
 $$\langle \mathcal{L}u,\varphi\rangle+\int_\Omega g(u)\varphi\,dx = 
 \int_{\Omega}h(u)f\,\varphi\,dx.
 $$
 This, together with the arbitrariness of $\varphi$, gives the desired \eqref{eq:ByPartsAllargata}.
 \end{proof}
  \medskip  
  
  \noindent\textbf{4)  A Leray-Lions-type Theorem.}
 We now state the announced Leray-Lions-ty\-pe theorem for our mixed operator $\mathcal{L}_p$
 which will be used in the sequel.
  \vspace{0.1cm}
  
  To this end we first recall that, 
  if $\Omega\subseteq\mathbb{R}^n$ is a bounded
  open set, by Remark \ref{rem:LuwhenuX12}
  we know that
  the following map
  $$u\ni \mathcal{X}^{1,p}(\Omega)\mapsto \mathcal{L}_pu\in(\mathcal{X}^{1,p}(\Omega))'$$
  is well-defined and continuous; moreover, it is not difficult to see that
  this map, still denoted by $\LL_p$, is monotone, coercive and hemicontinuous (see e.g. \cite[Section 3]{GarainLindgren}), and therefore it is surjective (see \cite[Corollary 2.2]{Showalter}). Finally, $\mathcal{L}_p$ is pseudomonotone (see \cite[Proposition 2.3]{Showalter}).
\medskip

Hence, we can have the following theorem.  
  \begin{theorem}\label{thm:LerayLions}
  	Let $p \in (1,+\infty)$ and $s\in (0,1)$. Let $a:\Omega \times \mathbb{R}\times \mathbb{R}^{n} \to \mathbb{R}^{n}$ and $F:\Omega \times \mathbb{R}\times \mathbb{R}^{n} \to \mathbb{R}$ be Carath\'{e}odory functions satisfying the following properties:
  	\begin{itemize}
  		\item there exists $\beta>0$ such that $|a(x,\sigma,\xi)| \leq \beta \, (|\sigma|^{p-1}+ |\xi|^{p-1})$;
  		\item there exists $\alpha>0$ such that $a(x,\sigma,\xi) \cdot \xi \geq \alpha |\xi|^p$, for all $\xi \in \mathbb{R}^{n}$;
  		\item $(a(x,\sigma,\xi)-a(x,\sigma,\eta) \cdot (\xi -\eta) >0$ for every $\xi \neq \eta$;
  		\item there exists $f\in L^{p'}(\Omega)$ such that $|F(x,\sigma,\eta)|\leq f(x)$.
  	\end{itemize}
  	Then there exists a solution $u\in \mathcal{X}^{1,p}(\Omega)$ of  
  \begin{equation}\tag{mLL}\label{eq:MixedLerayLions}
  	\left\{ \begin{array}{rl}
  		-\mathrm{div}(a(x,u,\nabla u)) + (-\Delta)_{p}^{s} u = F(x,u,\nabla u) & \textrm{in } \Omega,\\
  		u=0 & \textrm{in } \mathbb{R}^{n}\setminus \Omega.
  	\end{array}\right.
  \end{equation}
  \end{theorem}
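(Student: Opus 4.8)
\textbf{Proof proposal for Theorem \ref{thm:LerayLions}.}
The plan is to realize problem \eqref{eq:MixedLerayLions} as an abstract operator equation in the reflexive Banach space $V = \mathcal{X}^{1,p}(\Omega)$ and to invoke the surjectivity theory for pseudomonotone, coercive operators recalled just above the statement. Concretely, I would introduce the operator $A\colon V\to V'$ defined, for $u,\varphi\in V$, by
\begin{equation*}
\langle Au,\varphi\rangle = \int_\Omega a(x,u,\nabla u)\cdot\nabla\varphi\,dx + \iint_{\mathbb{R}^{2n}}\frac{B_p(u(x)-u(y))(\varphi(x)-\varphi(y))}{|x-y|^{n+ps}}\,dx\,dy - \int_\Omega F(x,u,\nabla u)\varphi\,dx,
\end{equation*}
and seek $u\in V$ with $Au = 0$. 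The growth bound on $a$, together with the continuous embedding $\mathcal{X}^{1,p}(\Omega)\hookrightarrow L^p(\Omega)$ (Remark \ref{rem:spaceX12}-3), item (3), in the case $p=2$, and its obvious $p$-analogue via \eqref{eq:defX12explicit}), the estimate \eqref{eq:DeltapsonX1pbuono} for the nonlocal part, and the bound $|F(x,\sigma,\eta)|\le f(x)$ with $f\in L^{p'}(\Omega)$, show that $Au\in V'$ and that $A$ maps bounded sets to bounded sets.

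Next I would verify the three structural properties. For \emph{coercivity}: the ellipticity $a(x,\sigma,\xi)\cdot\xi\ge\alpha|\xi|^p$ gives $\langle Au,u\rangle\ge \alpha\||\nabla u|\|_{L^p(\Omega)}^p + [u]_{p,s,\mathbb{R}^n}^p - \|f\|_{L^{p'}}\|u\|_{L^p(\Omega)}$; since $\||\nabla u|\|_{L^p(\Omega)}^p\le\rho_p(u)^p$ is equivalent to $\|u\|_{W^{1,p}(\mathbb{R}^n)}^p$ on $V$ (by Poincar\'e and \eqref{eq:H1embeddingHs}) while $\|u\|_{L^p(\Omega)}\le C\rho_p(u)$, the linear term is absorbed and $\langle Au,u\rangle/\rho_p(u)\to+\infty$. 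The nonlocal term, being nonnegative, only helps. For \emph{pseudomonotonicity}: I would split $A = A_1 + A_2 - A_3$, where $A_1$ is the local divergence part, $A_2$ the fractional $p$-Laplacian, $A_3$ the lower-order reaction. The operator $A_2$ is monotone, bounded and hemicontinuous on $V$ (exactly as recalled for $\mathcal{L}_p$ before the theorem), hence pseudomonotone; the Nemytskii-type map $A_3$ is, thanks to $|F|\le f\in L^{p'}$ and the compact embedding $\mathcal{X}^{1,p}(\Omega)\hookrightarrow L^p(\Omega)$ (Remark \ref{rem:spaceX12}-4), strongly continuous, hence pseudomonotone; and the sum of pseudomonotone operators is pseudomonotone. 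The only genuinely delicate piece is $A_1$: this is the classical Leray--Lions operator, whose pseudomonotonicity follows from the strict monotonicity hypothesis $(a(x,\sigma,\xi)-a(x,\sigma,\eta))\cdot(\xi-\eta)>0$ for $\xi\ne\eta$ via the standard Leray--Lions argument (if $u_k\rightharpoonup u$ in $V$ and $\limsup\langle A_1 u_k,u_k-u\rangle\le 0$, then, using strong $L^p$-convergence of $u_k$ and the compact embedding to control the dependence of $a$ on the zeroth-order slot, one shows $\nabla u_k\to\nabla u$ in $L^p$ along a subsequence and passes to the limit).

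With coercivity and pseudomonotonicity of $A\colon V\to V'$ established on the reflexive space $V = \mathcal{X}^{1,p}(\Omega)$, the abstract surjectivity theorem for pseudomonotone coercive operators (Br\'ezis; see \cite[Corollary 2.2 and Proposition 2.3]{Showalter}, invoked above for $\mathcal{L}_p$ itself) yields $u\in V$ with $Au = 0$, which is precisely a solution of \eqref{eq:MixedLerayLions} in the sense that \eqref{eq:defLpsdistrib}-type identities hold tested against all $\varphi\in\mathcal{X}^{1,p}(\Omega)$, in particular against all $\varphi\in C_0^\infty(\Omega)$. I expect the main obstacle to be the pseudomonotonicity of the local part $A_1$ in the range $1<p<2$, where the usual monotonicity trick is more delicate; this is, however, entirely classical (it is the heart of the original Leray--Lions theorem), and the nonlocal term $A_2$ causes no extra trouble since it is monotone and its pseudomonotonicity on $\mathcal{X}^{1,p}(\Omega)$ has already been recorded in the discussion preceding the statement.
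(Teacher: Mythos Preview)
Your proposal is correct and follows essentially the same approach as the paper: the paper's proof is a brief sketch observing that the classical Leray--Lions operator $u\mapsto -\mathrm{div}(a(x,u,\nabla u))-F(x,u,\nabla u)$ is pseudomonotone and coercive on $W_0^{1,p}(\Omega)$ (hence on $\mathcal{X}^{1,p}(\Omega)$), and that adding the monotone, bounded, continuous, nonnegative operator $(-\Delta)^s_p$ preserves both properties, so the surjectivity theorem applies. Your decomposition $A=A_1+A_2-A_3$ and the verification of coercivity and pseudomonotonicity of each piece is exactly this argument spelled out in more detail.
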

  	In absence of the nonlocal part $(-\Delta)_p^s$, Theorem \ref{thm:LerayLions} has been proved in \cite{LeLi} as an application of the surjectivity theorem. The crucial part of this proof consists in showing that the operator 
  	\begin{equation}
  		W_0^{1,p}(\Omega)\ni u\mapsto A(u)-\mathrm{div}(a(x,u,\nabla u)) - F(x,u,\nabla u)\in (W_0^{1,p}(\Omega))'
  	\end{equation}
  	\noindent is pseudomonotone and coercive. 
  	On the other hand,
  	since we know that the opera\-tor
  	 $u\mapsto (-\Delta)_p^s u$ is 
  	continuous from $\mathcal{X}^{1,p}(\Omega)$ to its dual,
  	see \eqref{eq:DeltapsonX1pbuono},
 and since it is not difficult to recognize that
  	\begin{align*}
  	(\ast)&\,\,\langle (-\Delta)^s_p u-(-\Delta)^s_p v, u-v\rangle \geq 0\\
  	(\ast)&\,\,\langle (-\Delta)^s_p u, u\rangle = [u]_{s,p,\mathbb{R}^n}\geq 0 	
  	\end{align*}
  	for every $u,v\in\mathcal{X}^{1,p}(\Omega)$ (see
  	\eqref{eq:explicitDeltapsX1p} and recall \eqref{eq:defX12explicit}),
  	 the mixed operator 
  	 $$A' = A+(-\Delta)^s_p$$ 
  	 we are  considering satisfies the assumptions of the surjectivity theorem as well.
  \vspace{0.1cm}
  
  	We explicitly point out that the argument outlined before shows that one is al\-lowed to perturb the original {\it Leray-Lions operator} with a more general nonlocal operator with $p$-growth provided it is pseudomonotone and non-negative when defined on the appropriate reflexive Banach space.
\bigskip

\noindent \textbf{4) Rearrangements.}
Finally, we end this section by recalling some basic notions and results about rearrangements that will be needed in Section \ref{sec:talenti}. For an exhaustive treatment on this topic see e.g. \cite{Talenti,Talenti2}.
\vspace{0.1cm}

Let $\Omega \subset \mathbb{R}^{n}$ be an open and bounded set and let $u: \Omega \rightarrow \mathbb{R}$ be a measurable function. We denote by $\mu_{u}$ the distribution function of $u$, that is
$$
\mu_{u}(t):=|\{x \in \Omega:|u(x)|>t\}|, \quad\text{for } t \geq 0
$$
and by $u^{*}$ its \emph{decreasing rearrangement} as the generalized inverse of $\mu_u$, that is
$$
u^{\ast}(s):=\sup \left\{t\geq 0: \mu_{u}(t)>s\right\}, \quad \text{for }s \in(0,|\Omega|).
$$
The radially symmetric, decreasing rearrangement of $u$, also known as the \emph{Schwarz decreasing rearrangement} of $u$, is hence defined as 
$$u^{\sharp}(x)=u^\ast(\omega_n|x|^n)\qquad x \in \Omega^\sharp,
$$
where $\omega_n$ is the measure of the unitary ball in $\R^n$, and $\Omega^\sharp$ is the ball (centered at the origin) having the same measure as $\Omega$.
From the definitions given above we can easily deduce that $u$, $u^\ast$ and $u^\sharp$ are equi-distributed, that is 
$$
\mu_u=\mu_{u^\ast}=\mu_{u^\sharp}.
$$
Moreover, from the definition, we can easily deduce the following properties that will be useful in the sequel:
\begin{itemize}
	\item[i)]if $|u(x)| \leq|v(x)|$ for a.e. $x \in \Omega$, then $u^{*}(s) \leq v^{*}(s), s \in(0,|\Omega|)$;
	\item[ii)]for any $c \in \mathbb{R},(u+c)^{*}(s)=u^{*}(s)+c, s \in(0,|\Omega|)$;
	\item[iii)] if $u \in L^{p}(\Omega), 1 \leq p \leq \infty$, then $u^{*} \in L^{p}(0,|\Omega|)$ and $\|u\|_{L^{p}(\Omega)}=\left\|u^{*}\right\|_{L^{p}(0,|\Omega|)}$;
	\item[iv)] if $u, v \in L^{p}(\Omega), 1 \leq p \leq \infty$, then
	$$
	\left\|u^{*}-v^{*}\right\|_{L^{p}(0,|\Omega|)} \leq\|u-v\|_{L^{p}(\Omega)} .
	$$
\end{itemize}

Moreover, the following inequality, known as Hardy-Littlewood inequality, holds true
\begin{equation*}
	\int_{\Omega}|u(x) v(x)| \,d x \leq \int_{0}^{|\Omega|} u^{*}(s) v^{*}(s) \,d s. 
\end{equation*}

  \section{Existence and uniqueness via approximating problems}\label{sec:Existence_and_uniqueness}
  In this section we provide the proofs of Theorems \ref{thm:Existence} and \ref{thm:Uniqueness}.
  \subsection{Existence}
For every $n,k\in \mathbb{N}$, we use the truncation functions $G_{k}(\cdot)$ 
(defined in \eqref{eq:def_Gk}) and $T_{n}(\cdot)$ (defined in \eqref{eq:def_Tk}) to 
set 
 \begin{equation} \label{eq:fngnhnGeneral}
 \begin{gathered}
 f_n:= T_{n}(f),
\qquad
 g_{k}(s):= \left\{ \begin{array}{rl}
 T_{k}(g(s)) & s\geq 0,\\
 0 & s< 0,
 \end{array}\right.
\\[0.1cm]
 h_{n}(s):= \left\{ \begin{array}{rl}
 T_{n}(h(s)) & s\geq 0,\\
 h(0) & s< 0.
 \end{array}\right.
 \end{gathered}
 \end{equation}
We then consider the following family of approximating problems
\begin{equation}\label{eq:Approx_Problem}
\left\{ \begin{array}{rl}
\mathcal{L}u_{n,k} + g_{k}(u_{n,k}) = h_n(u_{n,k})f_n & \textrm{in } \Omega,\\
u_{n,k} \geq 0 & \textrm{in } \Omega,\\
u_{n,k} = 0 & \textrm{in } \mathbb{R}^{n}\setminus \Omega.\end{array}\right.
\end{equation}  
The weak formulation of problem \eqref{eq:Approx_Problem} reads as follows:
\begin{equation}\label{eq:Weak_Approx}
\mathcal{B}(u_{n,k}, \varphi) + \int_{\Omega}g_{k}(u_{n,k})\varphi = \int_{\Omega}h_{n}(u_{n,k})f_{n}\varphi, \quad  \varphi \in \mathcal{X}^{1,2}(\Omega).
\end{equation}  
 We explicitly observe that we allowed to use $\mathcal{X}^{1,2}(\Omega)$ 
 as test\,-\,function space in \eqref{eq:Weak_Approx} since, by definition, $g_k,h_n$ and $f_n$ are globally bounded.

\begin{lemma}\label{lem:First_Approx}
The approximating problem \eqref{eq:Approx_Problem} admits a weak solution $u_{n,k}\in \mathcal{X}^{1,2}(\Omega)$ such that
\begin{itemize}
\item[i)] $u_{n,k}\in L^{\infty}(\Omega)$;
\item[ii)] $u_{n,k}$ is bounded in $L^{\infty}(\Omega)$ uniformly in $k\in\mathbb{N}$;
\item[iii)] $u_{n,k}$ is bounded in $\mathcal{X}^{1,2}(\Omega)$ uniformly in $k\in\mathbb{N}$.
\end{itemize}
\end{lemma}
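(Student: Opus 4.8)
The plan is to produce $u_{n,k}$ by invoking the Leray--Lions-type result Theorem~\ref{thm:LerayLions}, and then to read off the sign condition and the three listed properties from judicious choices of test function in the weak formulation~\eqref{eq:Weak_Approx}. For the existence part I would apply Theorem~\ref{thm:LerayLions} with $p=2$, with $a(x,\sigma,\xi):=\xi$ (so that $-\mathrm{div}(a(x,u,\nabla u))=-\Delta u$) and with $F(x,\sigma,\xi):=h_n(\sigma)f_n(x)-g_k(\sigma)$. The assumptions on $a$ are trivially met (growth and coercivity hold with $\alpha=\beta=1$, and strict monotonicity because $(a(x,\sigma,\xi)-a(x,\sigma,\eta))\cdot(\xi-\eta)=|\xi-\eta|^2$); the function $F$ is Carath\'eodory, independent of $\xi$, and --- since $f_n,g_k,h_n$ are globally bounded by construction --- it is bounded by a constant, hence dominated by an $L^2(\Omega)$ function as $\Omega$ is bounded. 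Theorem~\ref{thm:LerayLions} then delivers $u_{n,k}\in\mathcal{X}^{1,2}(\Omega)$ solving exactly~\eqref{eq:Weak_Approx}, all of $\mathcal{X}^{1,2}(\Omega)$ being admissible as test space there.

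To obtain $u_{n,k}\ge0$, I would test~\eqref{eq:Weak_Approx} with $\varphi=-u_{n,k}^{-}=\min\{u_{n,k},0\}$, which belongs to $\mathcal{X}^{1,2}(\Omega)$ by Remark~\ref{rem:spaceX12}: the local part of $\mathcal{B}(u_{n,k},-u_{n,k}^{-})$ equals $\|\nabla u_{n,k}^{-}\|_{L^2(\Omega)}^2$, the non-local part is $\ge0$ since $t\mapsto\min\{t,0\}$ is non-decreasing (so its integrand is pointwise non-negative), the $g_k$-term vanishes because $g_k\equiv0$ on $(-\infty,0)$, and the right-hand side is $\le0$ because $h_n,f_n\ge0$; hence $\|\nabla u_{n,k}^{-}\|_{L^2(\Omega)}^2\le0$, and since $u_{n,k}^{-}$ is supported in $\Omega$ the Poincar\'e inequality gives $u_{n,k}^{-}=0$.

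For the uniform $L^\infty$-bound in i)--ii) I would run a Stampacchia iteration. Testing with $\varphi=G_\ell(u_{n,k})=(u_{n,k}-\ell)^{+}\in\mathcal{X}^{1,2}(\Omega)$, the non-local part of $\mathcal{B}$ is again $\ge0$ (monotonicity of $G_\ell$), the term $\int_\Omega g_k(u_{n,k})G_\ell(u_{n,k})\,dx\ge0$ is discarded, and $h_n(u_{n,k})f_n\le n^2$, which yields $\|\nabla G_\ell(u_{n,k})\|_{L^2(\Omega)}^2\le n^2\int_{\{u_{n,k}>\ell\}}(u_{n,k}-\ell)\,dx$. Combining the Sobolev inequality with H\"older's inequality on $\{u_{n,k}>\ell\}$ one reaches, for $A_\ell:=\{u_{n,k}>\ell\}$, an inequality of the form $|A_m|\le C\,(m-\ell)^{-2^{*}}|A_\ell|^{(n+2)/(n-2)}$ for all $m>\ell>0$, with $C$ depending only on $n$ and the Sobolev constant, \emph{not} on $k$; since $(n+2)/(n-2)>1$ and $|A_0|\le|\Omega|$, the classical Stampacchia lemma produces a constant $M_n>0$ (depending on $n$ and $|\Omega|$ only) with $\|u_{n,k}\|_{L^\infty(\Omega)}\le M_n$ for every $k$, which proves i) and ii) (for $n\le2$ one replaces $2^{*}$ by any large finite exponent throughout). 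Finally, for iii) I would test with $\varphi=u_{n,k}$: since $\mathcal{B}(u_{n,k},u_{n,k})=\rho(u_{n,k})^2$ and $\int_\Omega g_k(u_{n,k})u_{n,k}\,dx\ge0$, using $h_n(u_{n,k})f_n\le n^2$ and the previous step I get $\rho(u_{n,k})^2\le n^2|\Omega|\,\|u_{n,k}\|_{L^\infty(\Omega)}\le n^2|\Omega|\,M_n$, uniformly in $k$.

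There is no real obstacle here beyond careful bookkeeping of the $k$-dependence in ii)--iii): the estimates are $k$-uniform precisely because the only $k$-dependent quantity, the absorption term $g_k(u_{n,k})$, always appears paired with a non-negative test function and is dropped by monotonicity, while the datum term $h_n(u_{n,k})f_n$ is bounded by the $k$-free constant $n^2$. The Stampacchia iteration is the only genuinely computational ingredient; the rest is a routine use of Theorem~\ref{thm:LerayLions} and of the monotonicity of $\mathcal{B}$ against truncation functions.
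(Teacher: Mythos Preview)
Your proposal is correct and follows essentially the same route as the paper: existence via Theorem~\ref{thm:LerayLions}, non-negativity by testing with (a sign of) $u_{n,k}^{-}$, the uniform $L^{\infty}$ bound via a Stampacchia iteration using $G_\ell(u_{n,k})$ as test function, and the uniform $\mathcal{X}^{1,2}$ bound by testing with $u_{n,k}$ itself. The only differences are cosmetic: the paper tests with $u_{n,k}^{-}$ (and checks the non-local sign case-by-case) rather than $-u_{n,k}^{-}$, and in the Stampacchia step the paper uses the sharper bound $h_n(u_{n,k})f_n\le \sup_{s\ge t}h(s)\cdot f_n$ on $\{u_{n,k}>t\}$ instead of your cruder $h_n(u_{n,k})f_n\le n^2$, but either choice yields a $k$-independent $L^\infty$ bound.
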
  
  \begin{proof}
  First, we notice that Theorem \ref{thm:LerayLions} provides the existence of a
  weak solution $u_{n,k}\in \mathcal{X}^{1,2}(\Omega)$ for every $k,n\in \mathbb{N}$ fixed. Moreover, exploiting \cite{BDVV}, one immediately obtains that $u_{n,k}\in L^{\infty}(\Omega)$. 
  \medskip
  
  Concerning i), recalling Remark \ref{rem:spaceX12}-1), we can test the weak formulation \eqref{eq:Weak_Approx} with $\varphi = u_{n,k}^{-}$, getting
\begin{equation}\label{eq:Testata_con_unk^-}
0\geq \mathcal{B}(u_{n,k}, u_{n,k}^{-}) + \int_{\Omega}g_{k}(u_{n,k})u_{n,k}^{-}\,dx = \int_{\Omega}h_{n}(u_{n,k})f_{n}u_{n,k}^{-}\,dx \geq 0,
\end{equation} 
  \noindent where the last inequality is a consequence of the nonnegativity of $h_{n}(u_{n,k})f_{n}u_{n,k}^{-}$. On the other hand, concerning the first inequality, it holds that
  \begin{equation*}
  0 = g_{k}(u_{n,k}(x))u_{n,k}^{-}(x) = \left\{ \begin{array}{rl}
  g_{k}(u_{n,k})\cdot 0, & x \in \{u_{n,k}\geq 0\}\\
  0 \cdot (-u_{n,k}),  & x \in \{u_{n,k}<0\},
  \end{array}\right.
  \end{equation*}
  \noindent and  
 \begin{equation*}
 \begin{aligned}
 & \mathcal{B}(u_{n,k}, u_{n,k}^{-}) \\
 & \qquad = -\int_{\Omega}|\nabla u_{n,k}^{-}|^2 \, dx + \iint_{\mathbb{R}^{2n}}\dfrac{(u_{n,k}(x)-u_{n,k}(y))(u_{n,k}^{-}(x)-u_{n,k}^{-}(y))}{|x-y|^{n+2s}}\,dxdy\\
 &\qquad\leq -\mathcal{B}(u_{n,k}^{-}, u_{n,k}^{-})\leq 0,
 \end{aligned}
\end{equation*}  
\noindent where we used that 
\begin{align*}
(LHS)&:= (u_{n,k}(x)-u_{n,k}(y))(u_{n,k}^{-}(x)-u_{n,k}^{-}(y)) \\
& \leq -(u_{n,k}^{-}(x)-u_{n,k}^{-}(y))^2 =: (RHS),	
\end{align*}
\noindent for every $x,y\in \mathbb{R}^{n}$. Indeed:
\begin{itemize}
\item if $x,y \in \{u_{n,k}\geq 0\}$, then $(LHS) = 0 =(RHS)$\,
\item if $x \in \{u_{n,k}\geq 0\}$ and $y \in \{u_{n,k}<0\}$, then
$$(LHS) = u_{n,k}(x)u_{n,k}(y) - u_{n,k}^{2}(y) \leq - u_{n,k}^{2}(y) =(RHS);$$
\item if $x \in \{u_{n,k}< 0\}$ and $y \in \{u_{n,k}\geq 0\}$, then
$$(LHS) = - u_{n,k}^{2}(x) + u_{n,k}(x)u_{n,k}(y)  \leq - u_{n,k}^{2}(x) =(RHS);$$
\item if $x,y \in \{u_{n,k}< 0\}$, then 
$$(LHS) = -(u_{n,k}(x)-u_{n,k}(y))^2 = (RHS).$$
\end{itemize}
  Therefore, going back to \eqref{eq:Testata_con_unk^-}, we find 
  $$0=\mathcal{B}(u_{n,k}^{-}, u_{n,k}^{-}) \geq \int_{\Omega}|\nabla u_{n,k}^{-}|^2 \, dx \geq C \, \|u_{n,k}^{-}\|^{2}_{L^{2}(\Omega)},$$
  \noindent and this implies that $u_{n,k}^{-} = 0$ in $\Omega$ and thus $u_{n,k}\geq 0$ a.e.\,in $\Omega$.\\
  
Regarding ii), for any given $t>0$ we take
\begin{equation}
G_{t}(u_{n,k}) \stackrel{\eqref{eq:def_Gk}}{=} (|u_{n,k}|-t)^{+}\textrm{sign}(u_{n,k}) \stackrel{(u_{n,k}\geq 0)}{=} (u_{n,k}-t)^{+} \in \mathcal{X}^{1,2}(\Omega),
\end{equation}  
  \noindent and we use as a test function in \eqref{eq:Weak_Approx}, finding
\begin{equation}\label{eq:Testata_con_G_t}
\mathcal{B}(u_{n,k}, G_{t}(u_{n,k})) + \int_{\Omega}g_{k}(u_{n,k})G_{t}(u_{n,k})\,dx = \int_{\Omega}h_{n}(u_{n,k})f_{n}G_{t}(u_{n,k})\,dx.
\end{equation} 
Now, clearly 
$$g_{k}(u_{n,k})G_{t}(u_{n,k}) \geq 0,$$
\noindent and
\begin{equation}
\begin{aligned}
&\mathcal{B}(u_{n,k}, G_{t}(u_{n,k}))  = \int_{\Omega}\nabla u_{n,k}\cdot \nabla G_{t}(u_{n,k})\, dx \\
&\qquad  + \iint_{\mathbb{R}^{2n}}\dfrac{(u_{n,k}(x)-u_{n,k}(y))(G_{t}(u_{n,k}(x))-G_{t}(u_{n,k}(x)))}{|x-y|^{n+2s}}\, dxdy \\
& \geq \int_{\Omega}|\nabla G_{t}(u_{n,k})|^{2}\, dx,
\end{aligned}
\end{equation}  
\noindent where the inequality for the nonlocal part follows from the fact that
$$(u_{n,k}(x)-u_{n,k}(y))((u_{n,k}-t)^{+}(x)-(u_{n,k}-t)^{+}(y)) \geq 0\quad
\text{for a.e.\,$x,y\in \mathbb{R}^{n}$},$$
 since the map $z\mapsto (z-t)^+$ is non-decreasing.

 Going back to \eqref{eq:Testata_con_G_t}, we finally have
 \begin{equation}
 \begin{aligned}
 \int_{\Omega}|\nabla G_{t}(u_{n,k})|^{2}\, dx &\leq \int_{\Omega}h_{n}(u_{n,k})f_{n}G_{t}(u_{n,k})\, dx\\
 &\leq \sup_{s\in [t,+\infty]}h(s) \, \int_{\Omega}f_{n}G_{t}(u_{n,k})\, dx.
 \end{aligned}
\end{equation}  
  \noindent The latter estimate is the starting point to run the nowadays classical procedure due to Stampacchia (see \cite{Stampacchia}). In the present case, this implies the existence of a positive constant $C>0$ independent of $k$ such that $\|u_{n,k}\|_{L^{\infty}(\Omega)}\leq C$. 
  
Concluding with iii), we can use the solution $u_{n,k}$ itself as a test function in \eqref{eq:Weak_Approx}, getting
\begin{equation}\label{eq:Testata_con_u_nk}
\begin{aligned}
\mathcal{B}(u_{n,k},u_{n,k}) & \leq \mathcal{B}(u_{n,k},u_{n,k}) + \int_{\Omega}g_{k}(u_{n,k})u_{n,k}\,dx \\
&=\int_{\Omega}h_{n}(u_{n,k})f_{n}u_{n,k}\,dx \leq C,
\end{aligned} 
\end{equation}  
  \noindent for some positive constant $C>0$, depending on $n$, $\Omega$ and $h$, but independent of $k\in \mathbb{N}$ (here we are also using assertion ii)).
  The latter \eqref{eq:Testata_con_u_nk} shows that $\rho(u_{n,k})$ is bounded uniformly on $k$ and hence that $u_{n,k}$ is bounded uniformly on $k$ in $\mathcal{X}^{1,2}(\Omega)$. 

This closes the proof.
  \end{proof}
  
 Thanks to Lemma \ref{lem:First_Approx}, we can easily 
 pass to the limit as $k\to +\infty$ obtaining, up to subsequences, a limit function $u_n$ which is a weak solution of
\begin{equation}\label{eq:Post_limit_in_k}
\left\{ \begin{array}{rl}
\mathcal{L}u_n + g(u_n) = h_{n}(u_n)f_n &\textrm{ in } \Omega,\\
u_n \geq 0 & \textrm{ in } \Omega,\\
u_n = 0 & \textrm{ in } \mathbb{R}^{n}\setminus \Omega.
\end{array}\right.
\end{equation}
Indeed, using assertion iii) in the cited Lemma \ref{lem:First_Approx},
jointly with Remark \ref{rem:spaceX12}-4),
we can find a function $u_n\in \mathcal{X}^{1,2}(\Omega)$ such that
(up to a subsequence)
\begin{itemize}
    \item[a)] $u_{n,k}\to u_n$ weakly in $\mathcal{X}^{1,2}(\Omega)$;
    \item[b)] $u_{n,k}\to u_n$ \emph{strongly} in $L^m(\Omega)$ for every $1\leq m < 2^\ast$;
    \item[c)] $u_{n,k}\to u_n$ pointwise a.e.\,in $\Omega$.
   \end{itemize}
Now, given any $\varphi\in \mathcal{X}^{1,2}(\Omega)$, by property a) we have
$$\mathcal{B}(u_{n,k},\varphi)\to \mathcal{B}(u_n,\varphi)\quad\text{as $k\to+\infty$}.$$
Moreover, by assertion ii) in Lemma \ref{lem:First_Approx} and
assumption (H)$_g$, we have
$$0\leq g_k(u_{n,k})\leq g(u_{n,k}) \leq \max_{[0,M]}g\quad\text{for every $k\geq 1$},$$
where $M = M_n > 0$ is such that $\|u_{n,k}\|_{L^\infty(\Omega)}\leq M$ for every $k\geq 1$; this, together with the above property c), allows us to use
the Dominated Convergence Theorem, from which we easily derive that
$$\int_{\Omega}g_k(u_{n,k})\varphi\,dx\to \int_{\Omega}g(u_n)\varphi\,dx\quad
\text{as $k\to+\infty$}.$$
Finally, again by Lemma \ref{lem:First_Approx}-ii) and the definition of $T_n$, we have
$$0\leq h_n(u_{n,k})f_n = T_n\big(h(u_{n,k})\big)\,T_n(f)\leq n^2\quad\text{for every $k\geq 1$};$$
hence, using property c) and again the Dominated Convergence Theorem, we get
$$\int_{\Omega}h_n(u_{n,k})f_n\varphi\,dx\to \int_{\Omega}h_n(u_{n})f_n\varphi\,dx\quad
\text{as $k\to+\infty$}.$$
Gathering all these facts, from \eqref{eq:Weak_Approx} we then infer that
\begin{equation}\label{eq:Weak_Post_limit_in_k}
\mathcal{B}(u_n,\varphi) + \int_{\Omega}g(u_n)\varphi\,dx = \int_{\Omega}h_n(u_n)f_n \varphi\,dx, \quad \varphi \in \mathcal{X}^{1,2}(\Omega).
\end{equation}
We explicitly notice that also in the above \eqref{eq:Weak_Post_limit_in_k}
we can use $\mathcal{X}^{1,2}(\Omega)$ as a test-fun-ction
space (since $g(u_n)\in L^\infty(\Omega)$, with a norm depending on $n$).
\vspace{0.1cm}

The following Lemma provides the necessary apriori estimates to further pass to the limit (as $n\to +\infty$) in \eqref{eq:Post_limit_in_k}. This an almost verbatim generalization of \cite[Lemma 4.1]{Oliva} to the local-nonlocal case.  
  
\begin{lemma} \label{lem:propun}
Let $u_n$ be a weak solution to \eqref{eq:Post_limit_in_k}.
Let $h$ satisfy {\rm (h1)} with $\gamma \leq 1$ and {\rm (h2)}. If one of the following holds
\begin{itemize}
\item[i)] $\theta \geq 1$ and $f\in L^{1}(\Omega)$;
\item[ii)] $\theta <1$, $f\in L^{m}(\Omega)$ with $m>1$, and $g$ satisfies {\rm (g1)},
\end{itemize}
\noindent then there exists a positive constant $C>0$  independent of $n$ such that
\begin{equation}\label{eq:Apriori_estimates}
\rho(u_n)^2 + \|g(u_n)u_n\|_{L^{1}(\Omega)} \leq C.
\end{equation}
Moreover, there exists a positive constant (still denoted by $C$) independent of $n$ such that
\begin{equation}\label{eq:Apriori_estimates_2}
\int_{\Omega}h_n(u_n)f_n \varphi\,dx \leq C + \rho(\varphi)^2 + C\|\varphi\|_{L^{\infty}(\Omega)},
\end{equation}
\noindent for every $\varphi \in \mathcal{X}^{1,2}(\Omega)\cap L^{\infty}(\Omega)$.
\end{lemma}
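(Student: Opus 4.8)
The plan is to establish \eqref{eq:Apriori_estimates} by using $u_n$ itself as a test function in \eqref{eq:Weak_Post_limit_in_k}, and then to obtain \eqref{eq:Apriori_estimates_2} almost for free by testing with the given $\varphi$. Since $u_n\in\mathcal{X}^{1,2}(\Omega)$ is an admissible test function, since $\mathcal{B}(u_n,u_n)=\rho(u_n)^2$, and since $g(u_n)u_n\geq 0$ (because $u_n\geq 0$ and $g\geq 0$), testing \eqref{eq:Weak_Post_limit_in_k} with $\varphi=u_n$ gives
\begin{equation*}
\rho(u_n)^2+\int_\Omega g(u_n)u_n\,dx=\int_\Omega h_n(u_n)f_n u_n\,dx=:\mathrm{RHS}_n,
\end{equation*}
so everything reduces to bounding $\mathrm{RHS}_n$ uniformly in $n$.

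To this end I split $\Omega$ into $\{u_n\leq\underline{s}\}$, $\{\underline{s}<u_n<\overline{s}\}$ and $\{u_n\geq\overline{s}\}$, and I use $h_n\leq h$ on $[0,+\infty)$. On the first set, since $\gamma\leq 1$ the function $t\mapsto t^{1-\gamma}$ is non-decreasing, so (h1) gives $h_n(u_n)u_n\leq\underline{C}u_n^{1-\gamma}\leq\underline{C}\,\underline{s}^{1-\gamma}$; on the second set $h_n(u_n)u_n\leq(\max_{[\underline{s},\overline{s}]}h)\,\overline{s}$ by continuity of $h$. Since $f_n\leq f$ and $f\in L^1(\Omega)$ in both cases, these two contributions are bounded by $C\|f\|_{L^1(\Omega)}$. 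On the third set (h2) gives $h_n(u_n)u_n\leq\overline{C}u_n^{1-\theta}$: in case i) ($\theta\geq 1$) the function $t\mapsto t^{1-\theta}$ is non-increasing, hence $u_n^{1-\theta}\leq\overline{s}^{1-\theta}$, and $\mathrm{RHS}_n\leq C\|f\|_{L^1(\Omega)}$, which already proves \eqref{eq:Apriori_estimates}.

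In case ii) ($\theta<1$, $f\in L^m(\Omega)$ with $m>1$, (g1)) the third set is controlled by means of the absorption term. There one estimates $\int_{\{u_n\geq\overline{s}\}}h_n(u_n)f_n u_n\,dx\leq\overline{C}\int_\Omega f\,u_n^{1-\theta}\,dx\leq\overline{C}\|f\|_{L^m(\Omega)}\big(\int_\Omega u_n^{(1-\theta)m'}\,dx\big)^{1/m'}$ by H\"older's inequality. The key algebraic identity is $(1-\theta)m'-1=\tfrac{1-m\theta}{m-1}$, so (g1) is exactly the requirement $(1-\theta)m'\leq q+1$; combining it with the lower bound $g(t)\geq\nu t^q$ valid for $t\geq s_1$ (whence $t^{q+1}\leq s_1^{q+1}+\nu^{-1}g(t)t$ for every $t\geq 0$) and with $t^{(1-\theta)m'}\leq 1+t^{q+1}$, one gets $\int_\Omega u_n^{(1-\theta)m'}\,dx\leq C_1+C_2\int_\Omega g(u_n)u_n\,dx$. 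Writing $Y_n:=\int_\Omega g(u_n)u_n\,dx$, the previous identity then yields
\begin{equation*}
\rho(u_n)^2+Y_n\leq K_0+\overline{C}\|f\|_{L^m(\Omega)}\,(C_1+C_2Y_n)^{1/m'}.
\end{equation*}
Since $1/m'<1$, Young's inequality with a small parameter absorbs $\tfrac{1}{2}Y_n$ into the left-hand side, giving $Y_n\leq C$ uniformly in $n$ and then $\rho(u_n)^2\leq C$; this is \eqref{eq:Apriori_estimates}.

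Finally, \eqref{eq:Apriori_estimates} readily implies \eqref{eq:Apriori_estimates_2}. First, it gives a uniform bound $\int_\Omega g(u_n)\,dx\leq C$ (on $\{u_n\geq 1\}$ one has $g(u_n)\leq g(u_n)u_n$, and on $\{u_n<1\}$ one has $g(u_n)\leq\max_{[0,1]}g$). Then, for $\varphi\in\mathcal{X}^{1,2}(\Omega)\cap L^\infty(\Omega)$, using $\varphi$ as a test function in \eqref{eq:Weak_Post_limit_in_k} together with the Cauchy--Schwarz bound $|\mathcal{B}(u_n,\varphi)|\leq\rho(u_n)\rho(\varphi)$,
\begin{equation*}
\int_\Omega h_n(u_n)f_n\varphi\,dx=\mathcal{B}(u_n,\varphi)+\int_\Omega g(u_n)\varphi\,dx\leq\tfrac{1}{2}\rho(u_n)^2+\tfrac{1}{2}\rho(\varphi)^2+\|\varphi\|_{L^\infty(\Omega)}\int_\Omega g(u_n)\,dx,
\end{equation*}
and \eqref{eq:Apriori_estimates_2} follows from the two uniform bounds. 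The only genuinely delicate point is the absorption in case ii): one must recognise that the smallest $q$ allowed by (g1) is exactly the one making the absorption term strong enough to reabsorb the H\"older estimate of the singular term, after which Young's inequality closes the argument. By contrast, the local--nonlocal nature of $\mathcal{L}$ creates no difficulty here, since $\mathcal{B}(u_n,u_n)=\rho(u_n)^2\geq 0$ and $|\mathcal{B}(u_n,\varphi)|\leq\rho(u_n)\rho(\varphi)$, exactly as for $-\Delta$ alone.
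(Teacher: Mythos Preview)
Your proof is correct and follows essentially the same strategy as the paper: test \eqref{eq:Weak_Post_limit_in_k} with $u_n$, split according to $\{u_n<\underline{s}\}$, $\{\underline{s}\leq u_n\leq\overline{s}\}$, $\{u_n>\overline{s}\}$, and then, for \eqref{eq:Apriori_estimates_2}, test with $\varphi$ and use the first estimate. The only cosmetic difference is in case ii): the paper applies Young's inequality with exponents $m,\,m'$ directly to $f_n u_n^{1-\theta}$ inside the integral, whereas you apply H\"older first and then Young at the scalar level; both routes exploit the same algebraic identity $(1-\theta)m'=q_{\min}+1$ and are equivalent. Your argument is in fact slightly more explicit than the paper's in deriving the uniform bound on $\|g(u_n)\|_{L^1(\Omega)}$ from that on $\|g(u_n)u_n\|_{L^1(\Omega)}$.
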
  
  \begin{proof}
  Firstly we test \eqref{eq:Post_limit_in_k} with the solution $u_n$ itself, finding
  \begin{equation}\label{eq:testing_with_u_n}
  \begin{aligned}
  &\mathcal{B}(u_n,u_n) + \int_{\Omega}g(u_n)u_n\,dx = \int_{\Omega}h_n(u_n)f_n u_n\,dx\\ &\leq \underline{C} \int_{\{u_n <\underline{s}\}}f_n u_n^{1-\gamma}\,dx  + \int_{\{\underline{s}\leq u_n \leq \overline{s}\}}h_n(u_n)f_n u_n\,dx + \overline{C}\int_{\{u_n>\overline{s}\}}f_n u_n^{1-\theta}\,dx,
  \end{aligned}
  \end{equation}
  \noindent where $\underline{C}$, $\gamma$ and $\underline{s}$ come from {\rm (h1)} while $\overline{C}$, $\theta$ and $\overline{s}$ from {\rm (h2)}.
  Now, in both cases i) and ii), we have that 
  \begin{equation}\label{eq:Holding_for_i_and_ii}
  \begin{aligned}
  \underline{C} \int_{\{u_n <\underline{s}\}}f_n u_n^{1-\gamma}\,dx  & + \int_{\{\underline{s}\leq u_n \leq \overline{s}\}}h_n(u_n)f_n u_n\,dx \\
&\leq \left( \underline{C}\, \underline{s}^{1-\gamma} + \max_{s \in [\underline{s},\overline{s}]}(h(s)s)\right)\|f\|_{L^{1}(\Omega)}
\end{aligned}
  \end{equation}

In case {\rm i)} holds, we can simply estimate the  very last term in the right hand side of \eqref{eq:testing_with_u_n} as follows
\begin{equation}\label{eq:Third_term_i}
\overline{C}\int_{\{u_n>\overline{s}\}}f_n u_n^{1-\theta}\,dx
\leq \overline{C}\overline{s}^{1-\theta}\, \|f\|_{L^{1}(\Omega)}.
\end{equation}  
Combining \eqref{eq:testing_with_u_n} with \eqref{eq:Holding_for_i_and_ii} and \eqref{eq:Third_term_i} we prove \eqref{eq:Apriori_estimates} in case {\rm i)} holds for 
\begin{equation*}
C:= \left( \underline{C}\, \underline{s}^{1-\gamma} + \max_{s \in [\underline{s},\overline{s}]}(h(s)s) + \overline{C}\overline{s}^{1-\theta}\right)\|f\|_{L^{1}(\Omega)},
\end{equation*}
\noindent which is independent of $n$.

  Let us now consider the case in which {\rm ii)} is in force. Starting from \eqref{eq:testing_with_u_n}, we can use Young's inequality (with exponents $m$ and $\tfrac{m}{m-1}$) on the very last term finding
  \begin{equation}\label{eq:Third_term_ii}
\overline{C}\int_{\{u_n>\overline{s}\}}f_n u_n^{1-\theta}\,dx
  \leq C_{\varepsilon} \|f\|^{m}_{L^{m}(\Omega)} + \varepsilon \int_{\{u_n > \overline{s}\}}u_{n}^{\tfrac{(1-\theta)m}{m-1}}\,dx,
  \end{equation}
  \noindent for every $\varepsilon>0$ and for some positive constant $C_\varepsilon>0$ depending on $\varepsilon$. From now on, we can assume without loss of generality, that $s_1 \leq \overline{s}$.
  Now, if $q= \tfrac{1-m\theta}{m-1}$ in {\rm (g1)}, it follows that
  $q+1 =\tfrac{(1-\theta)m}{m-1}$ and therefore
  \begin{equation}\label{eq:Choosing_varepsilon}
  \varepsilon \int_{\{ u_n>\overline{s}\geq s_1\}}u_n^{q+1}\,dx \leq \dfrac{\varepsilon}{\nu}\int_{\Omega}g(u_n)u_n\,dx,
  \end{equation}
  \noindent which can reabsorbed for $\varepsilon>0$ small enough (e.g. $\varepsilon = \tfrac{\nu}{2}$). Summarizing, in case {\rm ii)} with $q= \tfrac{1-m\theta}{m-1}$, combining \eqref{eq:testing_with_u_n}, \eqref{eq:Holding_for_i_and_ii}, \eqref{eq:Third_term_ii} and \eqref{eq:Choosing_varepsilon}, we get
  \begin{equation*}
  \begin{aligned}
    \rho(u_n)^2 &+ \dfrac{1}{2}\int_{\Omega}g(u_n)u_n\,dx \\
   & \leq C:=\left( \underline{C}\underline{s}^{1-\gamma} + \max_{s \in [\underline{s},\overline{s}]}(h(s)s) + \overline{C}\overline{s}^{1-\theta}\right)\|f\|_{L^{1}(\Omega)} + C_{\varepsilon} \|f\|^{m}_{L^{m}(\Omega)},
   \end{aligned}
  \end{equation*}
  \noindent which is independent of $n$ and the latter in turn gives
  \begin{equation*}
  \rho(u_n)^2 + \dfrac{1}{2}\int_{\Omega}g(u_n)u_n\,dx \leq 3C.
  \end{equation*}
  The case {\rm ii)} with $q >\tfrac{1-m\theta}{m-1}$ can be treated similarly applying once again Young's inequality with exponents $\left(\tfrac{(q+1)(m-1)}{(1-\theta)m}, \tfrac{(q+1)(m-1)}{(q+1)(m-1)-(1-\theta)m}\right)$ on the last term on the right hand side of \eqref{eq:Third_term_ii}. This choice of exponents, combined with {\rm (g1)}, gives
  \begin{equation}
  \begin{aligned}
  \varepsilon & \int_{\{u_n > \overline{s}\}}u_{n}^{\tfrac{(1-\theta)m}{m-1}}\,dx \\
  &\leq \dfrac{\varepsilon (1-\theta)m}{(q+1)(m-1)} \int_{\{u_n>\overline{s}\geq s_1\}}u_{n}^{q+1}\,dx + \dfrac{\varepsilon[(q+1)(m-1)-(1-\theta)m]}{(q+1)(m-1)}|\Omega|\\
  &\leq \dfrac{\varepsilon (1-\theta)m}{\nu (q+1)(m-1)} \int_{\Omega}g(u_{n})u_n\,dx + \dfrac{\varepsilon[(q+1)(m-1)-(1-\theta)m]}{(q+1)(m-1)}|\Omega|.
  \end{aligned}
  \end{equation}
  \noindent Arguing as before, we can complete the proof of \eqref{eq:Apriori_estimates} in case {\rm ii)} with $q >\tfrac{1-m\theta}{m-1}$. 
 \medskip
 
  Once \eqref{eq:Apriori_estimates} is proved, we are allowed to test \eqref{eq:Post_limit_in_k} with $\varphi \in \mathcal{X}^{1,2}(\Omega)\cap L^{\infty}(\Omega)$  finding
  \begin{equation*}
  \begin{aligned}
  \int_{\Omega}h_n(u_n)f_n \varphi\,dx &= \mathcal{B}(u_n,\varphi) +\int_{\Omega}g(u_n)\varphi\,dx \\
  &\leq \dfrac{\rho(u_n)^2}{2} + \dfrac{\rho(\varphi)^2}{2}+ \|g(u_n)\|_{L^{1}(\Omega)} \|\varphi\|_{L^{\infty}(\Omega)}.
    \end{aligned}
  \end{equation*}
    Combining the latter with \eqref{eq:Apriori_estimates} we get \eqref{eq:Apriori_estimates_2}. This closes the proof. 
  \end{proof}

We are now ready to prove the existence result.
\begin{proof}[Proof of Theorem \ref{thm:Existence}]
We consider a weak solution $u_n\in\mathcal{X}^{1,2}(\Omega)$ to problem \eqref{eq:Post_limit_in_k}.
 By \eqref{eq:Apriori_estimates}, we have that $\rho(u_n)$ is uniformly bounded (i.e., $u_n$ is uniformly bounded in $\mathcal{X}^{1,2}(\Omega)$); hence, by Remark \ref{rem:spaceX12}\,-\,4) there exist a (unique)
 function $u \in \mathcal{X}^{1,2}(\Omega)$ such that
 (by choosing a sub-sequence if needed, and as $n\to+\infty$)
\begin{itemize}
\item[a)] $u_n \to u$ weakly in $\mathcal{X}^{1,2}(\Omega)$;
\item[b)] $u_n \to u$  strongly in $L^{r}(\Omega)$ for every $1\leq r <2^{\ast}$;
\item[c)] $u_n\to u$ 
 pointwise a.e.\,in $\Omega$ (hence, in $\R^n$).
\end{itemize} 
Moreover, by passing to the limit as $n \to +\infty$ in \eqref{eq:Apriori_estimates} with the aid
of the Fatou Lemma, we derive that
\begin{equation} \label{eq:guuL1}
 g(u)u\in L^{1}(\Omega),
 \end{equation}
as requested. We explicitly stress that, owing to 
\eqref{eq:guuL1} (and recalling that $g$ is non-negative
and continuous on $[0,+\infty)$), we also derive that 
$$g(u)\in L^1(\Omega).$$
We now fix a test function $\varphi\in \mathcal{X}^{1,2}(\Omega)\cap L^\infty(\Omega),
\varphi\geq 0$ in $\Omega$,
and we turn to show that \emph{it is possible to pass to the limit as $n\to+\infty$ in \eqref{eq:Weak_Post_limit_in_k}}, thus obtaining
\begin{equation}  \label{eq:finalGoalWeaku}
 \mathcal{B}(u,\varphi)+\int_{\Omega}g(u)\varphi\,dx = \int_{\Omega} h(u)f\varphi\,dx.
 \end{equation}
Due the arbitrariness of $\varphi$, this proves that $u$ is a weak solution of 
\eqref{eq:Problem}. To ease the readability,
we study separately the three terms appearing in the cited \eqref{eq:Weak_Post_limit_in_k}.
\medskip

\noindent-\,\,\emph{The operator part.}
Thanks to {\rm b)}, we immediately have that 
\begin{equation} \label{eq:operatorTerm}
\mathcal{B}(u_n,\varphi) \to \mathcal{B}(u,\varphi) \quad \textrm{as } n \to +\infty.
\end{equation}
-\,\,\emph{The absorption term}.
Regarding the absorption term, our goal is to show that
$$\int_{\Omega}g(u_n)\varphi\,dx$$
\noindent is equi-integrable (with respect to $n$). To this aim, we first test \eqref{eq:Post_limit_in_k} with the function $S_{\eta,k}(u_n)$ (defined in \eqref{eq:def_S_delta_k} with $\eta, k>0$ being fixed), finding
\begin{equation}\label{eq:Test_con_S_eta_k}
	\mathcal{B}(u_n,S_{\eta,k}(u_n)) + \int_{\Omega}g(u_n)S_{\eta,k}(u_n)\,dx = \int_{\Omega}h_n(u_n)f_n \, S_{\eta,k}(u_n)\,dx.
\end{equation}
Now,
\begin{equation}\label{eq:Test_S_eta_k_locale}
	\int_{\Omega}\nabla u_n \cdot \nabla S_{\eta,k}(u_n)\,dx = \int_{\Omega}|\nabla u_n|^2 S'_{\eta,k}(u_n)\,dx,
\end{equation}
\noindent while we claim that
\begin{equation}\label{eq:Test_S_eta_k_nonlocale}
	\iint_{\mathbb{R}^{2n}}\dfrac{(u_n(x)-u_n(y))(S_{\eta,k}(u_n(x))-S_{\eta,k}(u_n(y)))}{|x-y|^{n+2s}}\,dx\,dy \geq 0.
\end{equation}
Indeed, since the map $z\mapsto S_{\eta,k}(z)$ is non-dec\-reasing, we have
\begin{equation*}
	(u_n(x)-u_n(y))(S_{\eta,k}(u_n(x))-S_{\eta,k}(u_n(y))) \geq 0\quad\text{for a.e.\,$x,y\in\mathbb{R}^n$}.
\end{equation*}
Combining \eqref{eq:Test_con_S_eta_k} with \eqref{eq:Test_S_eta_k_locale} and \eqref{eq:Test_S_eta_k_nonlocale}, we further get
\begin{equation*}
	\int_{\Omega}|\nabla u_n|^2 S'_{\eta,k}(u_n)\,dx + \int_{\Omega}g(u_n)S_{\eta,k}(u_n)\,dx \leq (\sup_{s\in [k,+\infty)}h(s))\int_{\Omega}f_n S_{\eta,k}(u_n)\,dx.
\end{equation*}
From now on we can follow verbatim \cite[Proof of Theorem 3.4]{Oliva}, giving
\begin{equation} \label{eq:AbsorTerm}
\lim_{n\to+\infty}\int_\Omega g(u_n)\varphi\,dx = \int_{\Omega} g(u)\varphi\,dx.
\end{equation}
\noindent-\,\,\emph{The singular term.}
Finally, we turn to study the last term of \eqref{eq:Weak_Post_limit_in_k}, that is,
$$\int_{\Omega}h_n(u_n)f_n \varphi\,dx$$
To this end we preliminary notice that, if $h(0)<+\infty$, an application
of the Dominated Convergence Theorem (based on \eqref{eq:assh2}) easily shows that
$$\lim_{n\to+\infty}\int_{\Omega}h_n(u_n)f_n \varphi\,dx = \int_{\Omega}h(u)f \varphi\,dx,$$
and the proof is complete in this case; hence, we assume that
\begin{equation} \label{eq:assumptionSingular}
 h(0) = +\infty,
\end{equation}
and we closely follow the argument in the proof
of \cite[Theorem 3.4]{Oliva}. 

First of all we first observe that, 
by passing to the limit as $n\to+\infty$ 
in \eqref{eq:Apriori_estimates_2} with the aid of the
Fatou Lemma, we get
\begin{equation} \label{eq:boundhufvarphi}
 \int_{\Omega} h(u)f\varphi\,dx \leq C,
\end{equation}
for some $C > 0$ independent of $n$. In particular, by
\eqref{eq:assumptionSingular}\,-\,\eqref{eq:boundhufvarphi},
 we have
 \begin{equation} \label{eq:inclusionufzero}
  \text{$\{u = 0\}\subseteq \{f = 0\}$ (up to a set of zero Lebesgue measure)}.
  \end{equation}
 We then fix
 $\delta > 0$ in such a way that $|\{u = \delta\}| = 0$
(notice that this is certainly possible, since $u\in \mathcal{X}^{1,2}(\Omega)\subseteq L^2(\Omega)$ ensures that $\mathcal{E} = \{\delta:\,|\{u = \delta\}| > 0\}\subseteq\mathbb{R}$ is at most countable, see \cite[Lemma 2.8]{MonticelliRW}), and we write
\begin{equation}\label{eq:splitting_the_third}
\begin{split}
\int_{\Omega}h_n(u_n)f_n \varphi\,dx & = \int_{\{u_n > \delta\}} h_n(u_n)f_n \varphi\,dx 
+ \int_{\{u_n \leq \delta\}} h_n(u_n)f_n \varphi\,dx\\
& = A_{n,\delta}+B_{n,\delta}.
\end{split}
\end{equation}
Taking into account \eqref{eq:splitting_the_third}, we now turn to show that it is possible
 to pass to the limit \emph{as $n\to+\infty$ and as $\delta\to 0^+$} in both terms
$A_{n,\delta},B_{n,\delta}$.
\medskip

-\,\,\emph{The term $A_{n,\delta}$}. As regards this term,
we can proceed exactly as in the proof
of \cite[Theorem 3.4]{Oliva} (where the same term is considered,
and since the \emph{operator term} does not play any role): owing to the properties of $h$
(see, in particular, property \eqref{eq:assh2}),
and taking into account \eqref{eq:boundhufvarphi}, we derive that
\begin{equation} \label{eq:limAndelta}
 \lim_{\delta\to 0^+}\lim_{n\to+\infty}A_{n,\delta} =
\int_\Omega h(u)f\varphi\,dx.
\end{equation}

-\,\,\emph{The term $B_{n,\delta}$}. As regards this term, instead, we claim that
\begin{equation} \label{eq:limBndelta}
 \lim_{\delta\to 0^+}\lim_{n\to+\infty}B_{n,\delta} = 0.
 \end{equation}
Taking this claim for granted for a moment, we can immediately complete the proof
of the theorem. Indeed, by combining \eqref{eq:limAndelta}\,-\,\eqref{eq:limBndelta},
from \eqref{eq:splitting_the_third} we deduce that
$$\lim_{n\to+\infty}\int_{\Omega}h_n(u_n)f_n \varphi\,dx 
= \int_\Omega h(u)f\varphi\,dx;$$
this, together with \eqref{eq:operatorTerm}\,-\,\eqref{eq:AbsorTerm}, finally shows \eqref{eq:finalGoalWeaku}.

Hence, we are left to show \eqref{eq:limBndelta}. To this end
we first observe that, by using the function $v_n = V_{\delta,\delta}(u_n)\varphi\in\mathcal{X}^{1,2}(\Omega)
\cap L^\infty(\Omega)$ as
a test function in \eqref{eq:Post_limit_in_k} (here, $V_{\delta,\delta}$ is the
function defined in \eqref{eq:def_V_delta_k} with $k = \delta$), we get
\begin{equation}\label{eq:Grad_v_n}
\begin{split}
&
0 \leq \int_{\{u_n \leq \delta\}} h_n(u_n)f_n \varphi\,dx \leq \int_{\Omega}h_n(u_n)f_n V_{\delta,\delta}
(u_n)\varphi\,dx \\
& \qquad
 = \int_{\Omega}\nabla u_n\cdot \nabla v_n\,dx
 + \iint_{\R^{2n}}\dfrac{(u_n(x)-u_n(y))(v_n(x)-v_n(y))}{|x-y|^{n+2s}}\,dx\,dy
 \\
 & \qquad\qquad
 +\int_{\Omega}g(u_n)v_n\,dx \\
 & \qquad (\text{since $V'_{\delta,\delta}(s)\leq 0$ for a.e.\,$s\geq 0$}) \\
 & \qquad
 \leq \int_{\Omega}V_{\delta,\delta}(u_n)\nabla u_n\cdot \nabla\varphi\,dx
 + \iint_{\R^{2n}}\dfrac{(u_n(x)-u_n(y))(v_n(x)-v_n(y))}{|x-y|^{n+2s}}\,dx\,dy
 \\
 & \qquad\qquad
 +\int_{\Omega}g(u_n)v_n\,dx.
 \end{split}
\end{equation}
Moreover, by arguing exactly as in the proof of \cite[Theorem 3.4]{Oliva}
(where the \emph{very same terms} are considered), we have
\begin{equation} \label{eq:convergenzedaOliva}
 \begin{split}
  \mathrm{i)}&\,\,\lim_{\delta\to 0^+}\lim_{n\to+\infty}
  \int_{\Omega}V_{\delta,\delta}(u_n)\nabla u_n\cdot \nabla\varphi\,dx
  = \int_{\{u = 0\}}\nabla u\cdot\nabla\varphi\,dx = 0;
  \\
   \mathrm{ii)}&\,\,\lim_{\delta\to 0^+}\lim_{n\to+\infty}\int_{\Omega}g(u_n)v_n\,dx
   = \int_{\{u = 0\}}g(u)\varphi\,dx = 0.
 \end{split}
\end{equation}
Hence, we are left to study the \emph{nonlocal part} in \eqref{eq:Grad_v_n}, whose presence is
the main different with respect to the case considered in \cite{Oliva}.
\vspace*{0.1cm}

To begin with we observe that, by combining the \emph{global interpolation
inequality} for fractional Sobolev spaces in 
\cite[Theorem 7.45]{LeoniFract}
(applied here with $p_1=p_2=2$ and $\theta = 1-s$) with
the properties of the sequence 
$\{u_n\}_n\subseteq\mathcal{X}^{1,2}(\Omega)$, we get
\begin{equation} \label{eq:convergenceunStrongHs}
\begin{split}
	[u_n-u]_{s,\mathbb{R}^{n}} &\leq C \, \|u_n-u\|_{L^{2}(\mathbb{R}^{n})}^{\theta}\, 
	\|\nabla(u_n-u)\|^{1-\theta}_{L^{2}(\mathbb{R}^{n})} \\
	& \leq C\,\|u_n-u\|_{L^{2}(\mathbb{R}^{n})}^{\theta}\,\rho(u_n-u)^{1-\theta}\\
	& (\text{by property b), and since $\rho(u_n)$ is uniformly bounded}) \\
	& \leq C\,\|u_n-u\|_{L^{2}(\mathbb{R}^{n})}^{\theta}
	\to 0 \quad \textrm{ as } n \to +\infty.
	\end{split}
\end{equation}
Moreover, it is readily seen that also the sequence $\{v_n\}_n$ is bounded in $\mathcal{X}^{1,2}(\Omega)$:
indeed, taking into account \eqref{eq:X12HsRn}, for every $n\in\mathbb{N}$ we have
\begin{equation}
 	\begin{aligned}
 		& \rho(v_n)^2\leq C\,\int_{\Omega}|\nabla v_n|^2\,dx 
 		 \\
 		 &\qquad= \int_{\Omega}\left|V'_{\delta,\delta}(u_n)\varphi \nabla u_{n} \chi_{\{\delta<u_n<2\delta\}} + V_{\delta,\delta}(u_n)\nabla \varphi\right|^2\,dx \\
 		&\qquad \leq C \left( \int_{\Omega}|\nabla u_n|^2\,dx +  \int_{\Omega}|\nabla \varphi|^2\,dx\right)
 		\leq C.
 	\end{aligned}
\end{equation}
As a consequence, by Remark \ref{rem:spaceX12}\,-\,4) (and recalling that
$u_n\to u$ a.e.\,on $\R^n$), we deduce that (by passing to a sub-sequence if needed)
$$\text{$v_n\to v = V_{\delta,\delta}(u)\varphi$ 
strongly in $L^{2}(\Omega)$ (and pointwise a.e.\,in $\Omega$)}. $$
From this,
again by using the \emph{global interpolation
inequality} for fractional Sobolev spa\-ces in 
\cite[Theorem 7.45]{LeoniFract}, we obtain
\begin{equation} \label{eq:convergencevnStrongHs}
\begin{split}
	[v_n-v]_{s,\mathbb{R}^{n}} &\leq C \, \|v_n-v\|_{L^{2}(\mathbb{R}^{n})}^{\theta}\, 
	\|\nabla(v_n-v)\|^{1-\theta}_{L^{2}(\mathbb{R}^{n})} \\
	& \leq C\,\|v_n-v\|_{L^{2}(\mathbb{R}^{n})}^{\theta}\,\rho(v_n-v)^{1-\theta}\\
	& \leq C\,\|v_n-v\|_{L^{2}(\mathbb{R}^{n})}^{\theta}
	\to 0 \quad \textrm{ as } n \to +\infty.
	\end{split}
\end{equation}
Gathering \eqref{eq:convergenceunStrongHs}\,-\,\eqref{eq:convergencevnStrongHs}
(and using H\"older inequality), 
we can then pass to the limit as $n\to+\infty$ in the nonlocal
part in \eqref{eq:Grad_v_n}, obtaining
\begin{align*}
 & \Big|\iint_{\R^{2n}}\dfrac{(u_n(x)-u_n(y))(v_n(x)-v_n(y))}{|x-y|^{n+2s}}\,dx\,dy
 \\
 & \qquad\qquad - \iint_{\R^{2n}}\dfrac{(u(x)-u(y))(v(x)-v(y))}{|x-y|^{n+2s}}\,dx\,dy\Big| \\
 & \qquad\leq 
 \iint_{\R^{2n}}\dfrac{|(u_n-u)(x)-(u_n-u)(y)||v_n(x)-v_n(y)|}{|x-y|^{n+2s}}\,dx\,dy\\
 &\qquad\qquad
 + \iint_{\R^{2n}}\dfrac{|u(x)-u(y)||(v_n-v)(x)-(v_n-v)(y)|}{|x-y|^{n+2s}}\,dx\,dy\\[0.15cm]
 & \qquad
 \leq [u_n-u]_{s,\R^n}[v_n]_{s,\R^n}+[u]_{s,\R^n}[v_n-v]_{s,\R^n}\to 0\quad\text{as $n\to+\infty$}.
\end{align*}
In other words, we have
\begin{equation} \label{eq:limitnNonlocalpart}
\begin{split}
 & \lim_{n\to+\infty}\iint_{\R^{2n}}\dfrac{(u_n(x)-u_n(y))(v_n(x)-v_n(y))}{|x-y|^{n+2s}}\,dx\,dy
 \\
 &\qquad
 = 
 \iint_{\R^{2n}}\dfrac{(u(x)-u(y))(V_{\delta,\delta}(u)(x)\varphi(x)-
  V_{\delta,\delta}(u)(y)\varphi(y))}{|x-y|^{n+2s}}\,dx\,dy.
 \end{split}
\end{equation}
Finally, we turn to pass to the limit as $\delta\to 0^+$ in the right-hand side of 
\eqref{eq:limitnNonlocalpart}. To this aim, we distinguish the following two cases (according
to the values of $s$).
\medskip

-\,\,\textsc{Case I) $0<s<1/2$.} In this first case, we claim that it is possible to
apply the Le\-be\-sgue Dominated Convergence Theorem to show that
\begin{equation} \label{eq:toProveWithDCTLows}
\begin{split}
 & \lim_{\delta\to 0^+}
  \iint_{\R^{2n}}\dfrac{(u(x)-u(y))(V_{\delta,\delta}(u)(x)\varphi(x)-
  V_{\delta,\delta}(u)(y)\varphi(y))}{|x-y|^{n+2s}}\,dx\,dy \\
  & \qquad
  = \iint_{\R^{2n}}\dfrac{(u(x)-u(y))((\chi_{\{u = 0\}}\varphi)(x)-
 (\chi_{\{u = 0\}}\varphi)(y))}{|x-y|^{n+2s}}\,dx\,dy.
\end{split}  
\end{equation}
Indeed, by the very definition of $V_{\delta,\delta}(\cdot)$ we readily see that
$$\text{$\lim_{\delta\to 0^+}V_{\delta,\delta}(u) = \chi_{\{u = 0\}}$ a.e.\,in $\Omega$}.$$
Moreover, since $0\leq V_{\delta,\delta}(\cdot)\leq 1$ and since $\varphi\in L^\infty(\Omega)$, we have
\begin{equation*}
\begin{split}
 & \Big|\dfrac{(u(x)-u(y))(V_{\delta,\delta}(u)(x)\varphi(x)-
  V_{\delta,\delta}(u)(y)\varphi(y))}{|x-y|^{n+2s}}\,dx\,dy\Big| \\
  & \qquad\qquad\leq 2\|\varphi\|_{L^\infty(\Omega)}
  \dfrac{|u(x)-u(y)|}{|x-y|^{n+2s}} = \mathfrak{g}(x,y)
  \end{split}
 \end{equation*}
and \emph{the function $\mathfrak{g}$ is integrable on $\R^n\times\R^n$}. To see this fact it suffices
to notice that,
since $0<s<1/2$, by arguing as in \cite[Proposition 2.2]{DRV} 
(and by recalling the explicit expression of the space $\mathcal{X}^{1,2}(\Omega)$ given in
 \eqref{eq:defX12explicit}) we get
\begin{align*}
 & \iint_{\R^{2n}}\mathfrak{g}(x,y)\,dx\,dy 
 \leq C
 \Big(\int_{\{|x-y|\leq 1\}}\frac{|u(x)-u(y)|}{|x-y|^{n+2s}}\,dx\,dy
 +\|u\|_{L^1(\Omega)}\Big) \\
 & \qquad
 \leq C\Big\{\int_{\R^n}\Big(\int_{B_1(0)}\frac{|u(x)-u(x+z)|}{|z|}\cdot\frac{1}{|z|^{n+(2s-1)}}\,dz\Big)dx
 + \|u\|_{L^1(\Omega)}\Big\} \\
 & \qquad
 \leq C\Big\{\||\nabla u|\|_{L^1(\R^n)}\int_{B_1(0)}\frac{1}{|z|^{n+(2s-1)}}\,dz
 + \|u\|_{L^1(\Omega)}\Big\} < +\infty,
\end{align*}
where $C > 0$ is a suitable constant depending on $\varphi,n$ and $s$.

Gathering all these facts, we are then entitled to apply the 
Le\-be\-sgue Dominated Convergence Theorem in \eqref{eq:limitnNonlocalpart},
which yields \eqref{eq:toProveWithDCTLows}. Summing up, we have
\begin{equation} \label{eq:tousePerConcludereCasoA}
\begin{split}
& \lim_{\delta\to 0^+}
\lim_{n\to+\infty}
 \iint_{\R^{2n}}\dfrac{(u_n(x)-u_n(y))(v_n(x)-v_n(y))}{|x-y|^{n+2s}}\,dx\,dy \\
 & \qquad
 = \iint_{\R^{2n}}\dfrac{(u(x)-u(y))((\chi_{\{u = 0\}}\varphi)(x)-
 (\chi_{\{u = 0\}}\varphi)(y))}{|x-y|^{n+2s}}\,dx\,dy.
 \end{split}
\end{equation}
With \eqref{eq:tousePerConcludereCasoA} at hand, we can finally complete
the proof
of the claimed \eqref{eq:limBndelta} in this case. Indeed, by a direct computation case\,-\,by\,-\,case
we readily derive that
$$\iint_{\R^{2n}}\dfrac{(u(x)-u(y))((\chi_{\{u = 0\}}\varphi)(x)-
 (\chi_{\{u = 0\}}\varphi)(y))}{|x-y|^{n+2s}}\,dx\,dy\leq 0;$$
 thus, by combining this last inequality with \eqref{eq:convergenzedaOliva}, from
 \eqref{eq:Grad_v_n} we obtain
 \begin{align*}
  & 0\leq \lim_{\delta\to 0^+}\lim_{n\to+\infty}
  B_{n,\delta}\\
  & \qquad\qquad\leq \iint_{\R^{2n}}\dfrac{(u(x)-u(y))((\chi_{\{u = 0\}}\varphi)(x)-
 (\chi_{\{u = 0\}}\varphi)(y))}{|x-y|^{n+2s}}\,dx\,dy\leq 0,
 \end{align*}
 from which we immediately derive the claimed \eqref{eq:limBndelta}.
 \medskip
 
 -\,\,\textsc{Case II): $1/2<s<1$}. In this second case, the non-integrability of the 
 singular kernel $|z|^{-n-(2s-1)}$
 near the origin seems to prevent the possibility of applying the
 Lebesgue Dominated Convergence Theorem
 in \eqref{eq:limitnNonlocalpart}; thus, we essentially imitate the approach
 in \cite{Oliva} for the \emph{local part}.
 \vspace*{0.1cm}
 
 First of all, we write
 \begin{align*}
  & \iint_{\R^{2n}}\dfrac{(u(x)-u(y))(V_{\delta,\delta}(u)(x)\varphi(x)-
  V_{\delta,\delta}(u)(y)\varphi(y))}{|x-y|^{n+2s}}\,dx\,dy \\
  & \qquad
  = \iint_{\R^{2n}}V_{\delta,\delta}(u)(x)\dfrac{(u(x)-u(y))(\varphi(x)-
 \varphi(y))}{|x-y|^{n+2s}}\,dx\,dy \\
 & \qquad\qquad
 +
 \iint_{\R^{2n}}\varphi(y)\dfrac{(u(x)-u(y))(V_{\delta,\delta}(u)(x)-
  V_{\delta,\delta}(u)(y))}{|x-y|^{n+2s}}\,dx\,dy \\
  & \qquad (\text{since $V_{\delta,\delta}(\cdot)$ is non-increasing}) \\
  & \qquad\leq \iint_{\R^{2n}}V_{\delta,\delta}(u)(x)\dfrac{(u(x)-u(y))(\varphi(x)-
 \varphi(y))}{|x-y|^{n+2s}}\,dx\,dy.
 \end{align*}
 Moreover, since $0\leq V_{\delta,\delta}(\cdot)\leq 1$, we have
 \begin{align*}
  & \Big|V_{\delta,\delta}(u)(x)\dfrac{(u(x)-u(y))(\varphi(x)-
 \varphi(y))}{|x-y|^{n+2s}}\Big| \\
 & \qquad
  \leq \dfrac{|u(x)-u(y)||\varphi(x)-
 \varphi(y)|}{|x-y|^{n+2s}} = \mathfrak{g}(x,y),
 \end{align*}
 and this function $\mathfrak{g}$ is trivially integrable on $\R^n\times\R^n$ (as both $u$ and $\varphi$
 belong to the space $\mathcal{X}^{1,2}(\Omega)$, see Remark \ref{rem:spaceX12}\,-\,c)).
 As a consequence, we can apply the Lebesgue Dominated Convergence Theorem, obtaining
 \begin{align*}
  & \limsup_{\delta\to 0^+}
  \iint_{\R^{2n}}\dfrac{(u(x)-u(y))(V_{\delta,\delta}(u)(x)\varphi(x)-
  V_{\delta,\delta}(u)(y)\varphi(y))}{|x-y|^{n+2s}}\,dx\,dy \\
  & \qquad \leq \lim_{\delta\to 0^+}\iint_{\R^{2n}}V_{\delta,\delta}(u)(x)\dfrac{(u(x)-u(y))(\varphi(x)-
 \varphi(y))}{|x-y|^{n+2s}}\,dx\,dy \\
 & \qquad = 
 \iint_{\{u = 0\}\times\R^n}\dfrac{(u(x)-u(y))(\varphi(x)-
 \varphi(y))}{|x-y|^{n+2s}}\,dx\,dy.
 \end{align*}
 Summing up, in this second case we have
 \begin{equation*}
\begin{split}
& \limsup_{\delta\to 0^+}
\lim_{n\to+\infty}
 \iint_{\R^{2n}}\dfrac{(u_n(x)-u_n(y))(v_n(x)-v_n(y))}{|x-y|^{n+2s}}\,dx\,dy \\
 & \qquad
 \leq \iint_{\{u = 0\}\times\R^n}\dfrac{(u(x)-u(y))(\varphi(x)-
 \varphi(y))}{|x-y|^{n+2s}}\,dx\,dy.
 \end{split}
\end{equation*}
 With the above estimate at hand, we can  complete
the proof
of the claimed \eqref{eq:limBndelta} also in this case. 
Indeed, since $s > 1/2$, by combining \eqref{eq:assumptionMeasurefzero}\,-\,\eqref{eq:inclusionufzero}, 
we see that
$$|\{u = 0\}| = 0,$$ 
and therefore we clearly have
\begin{equation} \label{eq:NonLocalzeroCaseB}
 \iint_{\{u = 0\}\times\R^n}\dfrac{(u(x)-u(y))(\varphi(x)-
 \varphi(y))}{|x-y|^{n+2s}}\,dx\,dy = 0.
 \end{equation}
 Then, by combining \eqref{eq:NonLocalzeroCaseB} with \eqref{eq:convergenzedaOliva}, from
 \eqref{eq:Grad_v_n} we obtain
 \begin{align*}
  & 0\leq \lim_{\delta\to 0^+}\lim_{n\to+\infty}
  B_{n,\delta}\\
  & \qquad\qquad\leq \iint_{\{u = 0\}\times\R^n}\dfrac{(u(x)-u(y))(\varphi(x)-
 \varphi(y))}{|x-y|^{n+2s}}\,dx\,dy = 0,
 \end{align*}
 from which we immediately derive the claimed \eqref{eq:limBndelta}.
\end{proof}

\subsection{Uniqueness}
Finally, we provide the  
\begin{proof}[Proof of Theorem \ref{thm:Uniqueness}]
Suppose by contradiction that there exist two different solutions of \eqref{eq:Problem} $u_1, u_2$ such that  $g(u_1), g(u_2)\in L^{1}(\Omega)$ and
$$\text{$u_1|_\Omega,u_2	|_\Omega \in H_0^1(\Omega)$
(hence, $u_1,u_2\in \mathcal{X}^{1,2}(\Omega)$}).$$ 
Owing to Lemma \ref{lem:LargerTest}, we can use 
$\varphi = T_{k}(u_1 - u_2)$ as a test function either for the equations solved by $u_1$, either for the one solved by $u_2$. Taking the difference yields
\begin{equation*}
	\begin{aligned}
			&\int_{\Omega}\nabla(u_1-u_2)\cdot \nabla T_{k}(u_1 - u_2)\,dx \\
			& \qquad + \iint_{\mathbb{R}^{2n}}\dfrac{((u_1-u_2)(x)-(u_1-u_2)(y))(T_{k}(u_1-u_2)(x)-T_{k}(u_1-u_2)(y))}{|x-y|^{n+2s}}\,dx\,dy\\
			&\qquad +\int_{\Omega}(g(u_1)-g(u_2))T_{k}(u_1 - u_2)\,dx \\
			&\qquad = \int_{\Omega}(h(u_1)-h(u_2))f T_{k}(u_1 - u_2)\,dx. 
	\end{aligned}
\end{equation*}
Now, since the map $z\mapsto T_k(z)$ is non-decreasing, we have
\begin{equation*}
((u_1-u_2)(x)-(u_1-u_2)(y))(T_{k}(u_1-u_2)(x)-T_{k}(u_1-u_2)(y))\geq 0.
\end{equation*}
Therefore, noticing that
\begin{equation*}
	\int_{\Omega}(g(u_1)-g(u_2))T_{k}(u_1 - u_2)\,dx \geq 0
\end{equation*}
\noindent and 
\begin{equation*}
	\int_{\Omega}(h(u_1)-h(u_2))f T_{k}(u_1 - u_2)\,dx\leq 0,
\end{equation*}
\noindent we find that
\begin{equation*}
	\int_{\Omega}\nabla(u_1-u_2)\cdot \nabla(u_1 - u_2) \chi_{\{|u_1-u_2|\leq k\}}\,dx \leq 0 \quad \textrm{ for every } k \geq 0,
\end{equation*}
\noindent which gives $u_1 =u_2$ a.e. in $\Omega$ after the use of a monotonicity argument. This closes the proof.
\end{proof}

\section{Gain of summability via comparison results}\label{sec:talenti}
Here we establish the pointwise comparison \`a la Talenti mentioned
in the Introduction, allowing to prove Theorem \ref{thm:MoreSummability}.

\begin{theorem}\label{thm:pointwise_talenti}
Let the assumptions of Corollary \ref{cor:ModelProblemEU}, and let $u$ be the unique solu\-tion to the model problem 
\begin{equation}\label{eq:Model-talenti}
\left\{ \begin{array}{rl}
-\Delta u + (-\Delta)^s u + u^q = \dfrac{f(x)}{u^{\gamma}} & \text{ in } \Omega,\\
u>0 & \text{ in } \Omega,\\
u=0 & \text{ on } \mathbb{R}^{n}\setminus \Omega.
\end{array}\right.
\end{equation}
Then
\begin{equation}\label{th:talenti2}
u^\ast(\tau) \leq\left((\gamma+1)v^\ast(\tau)\right)^{\frac{1}{\gamma+1}}, \quad \text{{\color{red}} for a.e. }\tau \in(0,|\Omega|),
\end{equation}
where $v(x)=v^\ast\left(\omega_n|x|^n\right)$ is the unique radial
solution to the problem
\begin{equation}\label{eq:comp-talenti}
\begin{cases}-\Delta v=f^{\sharp} & \text { in } \Omega^{\sharp} \\ v=0 & \text { on } \partial\Omega^{\sharp} .\end{cases}
\end{equation}
\end{theorem}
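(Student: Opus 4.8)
\emph{Strategy.} The plan is to run Talenti's classical symmetrization scheme \emph{directly on the equation solved by $u$} — rather than on an auxiliary equation for a power of $u$ — so that both the absorption term $u^q$ and, more importantly, the nonlocal term $(-\Delta)^s u$ can be discarded because of a sign; the singular datum $f/u^\gamma$ and the exponent $\gamma+1$ will then be handled at the level of the resulting one-dimensional ordinary differential inequality for $u^\ast$, following the substitution idea of \cite{BCT}.

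\emph{Step 1: the level-set inequality.} Fix $t>0$ and, for $h>0$, let $\psi_h$ be the bounded, non-decreasing, Lipschitz function $\psi_h(\sigma):=\tfrac1h\big[(\sigma-t)^+-(\sigma-t-h)^+\big]$, so that $0\le\psi_h\le\chi_{(t,+\infty)}$ and $\psi_h\to\chi_{(t,+\infty)}$ as $h\to0^+$. Since $\psi_h(0)=0$ and $u|_\Omega\in H^1_0(\Omega)$, the function $\psi_h(u)$ belongs to $\mathcal{X}^{1,2}(\Omega)\cap L^\infty(\Omega)$, hence it is an admissible test function by Lemma \ref{lem:LargerTest}; plugging it into \eqref{eq:ByPartsAllargata} written for \eqref{eq:Model-talenti} gives
\begin{equation*}
\int_\Omega \nabla u\cdot\nabla\psi_h(u)\,dx+\langle(-\Delta)^s u,\psi_h(u)\rangle+\int_\Omega u^q\,\psi_h(u)\,dx=\int_\Omega \frac{f}{u^\gamma}\,\psi_h(u)\,dx.
\end{equation*}
The absorption integral is non-negative, and since $\psi_h$ is non-decreasing the elementary inequality $(a-b)(\psi_h(a)-\psi_h(b))\ge0$ forces $\langle(-\Delta)^s u,\psi_h(u)\rangle\ge0$ as well. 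Therefore
\begin{equation*}
\frac1h\int_{\{t<u<t+h\}}|\nabla u|^2\,dx=\int_\Omega\nabla u\cdot\nabla\psi_h(u)\,dx\le\int_\Omega\frac{f}{u^\gamma}\,\psi_h(u)\,dx .
\end{equation*}
Letting $h\to0^+$ (dominated convergence on the right, since $0\le\psi_h(u)\le\chi_{\{u>t\}}$ and $f/u^\gamma\le f/t^\gamma$ on $\{u>t\}$) and estimating the limit by the Hardy--Littlewood inequality, we obtain, for a.e.\ $t>0$,
\begin{equation*}
\liminf_{h\to0^+}\frac1h\int_{\{t<u<t+h\}}|\nabla u|^2\,dx\;\le\;\frac{1}{t^\gamma}\int_0^{\mu_u(t)}f^\ast(s)\,ds .
\end{equation*}

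\emph{Step 2: the ODE for $u^\ast$ and the comparison.} From here the argument is the classical one. By Cauchy--Schwarz, $\big(\tfrac1h\int_{\{t<u<t+h\}}|\nabla u|\big)^2\le\big(\tfrac{|\{t<u<t+h\}|}{h}\big)\big(\tfrac1h\int_{\{t<u<t+h\}}|\nabla u|^2\big)$; letting $h\to0^+$, using the coarea formula, $|\{t<u<t+h\}|/h\to-\mu_u'(t)$ and the isoperimetric inequality $P(\{u>t\})\ge n\omega_n^{1/n}\mu_u(t)^{1-1/n}$, we get, for a.e.\ $t>0$,
\begin{equation*}
n^2\omega_n^{2/n}\,\mu_u(t)^{2-\frac2n}\le\big(-\mu_u'(t)\big)\,\frac{1}{t^\gamma}\int_0^{\mu_u(t)}f^\ast(s)\,ds .
\end{equation*}
Since $u|_\Omega\in H^1_0(\Omega)$, the rearrangement $u^\ast$ is locally absolutely continuous on $(0,|\Omega|)$ (see e.g.\ \cite{Talenti}); performing the change of variable $t=u^\ast(\tau)$, $\mu_u(t)=\tau$ (so that $\big(-\mu_u'(u^\ast(\tau))\big)\big(-(u^\ast)'(\tau)\big)=1$) and multiplying through by $(u^\ast(\tau))^\gamma(-(u^\ast)'(\tau))\ge0$, the previous inequality becomes
\begin{equation*}
n^2\omega_n^{2/n}\,\tau^{2-\frac2n}\,(u^\ast(\tau))^\gamma\big(-(u^\ast)'(\tau)\big)\le\int_0^\tau f^\ast(s)\,ds\qquad\text{for a.e.\ }\tau\in(0,|\Omega|).
\end{equation*}
Recognizing $(u^\ast(\tau))^\gamma\big(-(u^\ast)'(\tau)\big)=-\dfrac{d}{d\tau}\Big(\dfrac{(u^\ast(\tau))^{\gamma+1}}{\gamma+1}\Big)$, and recalling that the radial solution $v$ of \eqref{eq:comp-talenti} satisfies Talenti's identity $-(v^\ast)'(\tau)=\dfrac{1}{n^2\omega_n^{2/n}\tau^{2-2/n}}\displaystyle\int_0^\tau f^\ast(s)\,ds$ (obtained by integrating the radial ODE for \eqref{eq:comp-talenti}), we arrive at
\begin{equation*}
-\frac{d}{d\tau}\Big(\frac{(u^\ast(\tau))^{\gamma+1}}{\gamma+1}\Big)\le-(v^\ast)'(\tau)\qquad\text{for a.e.\ }\tau\in(0,|\Omega|).
\end{equation*}
Both sides are integrable on $(\tau,|\Omega|)$, the maps $\tau\mapsto(u^\ast(\tau))^{\gamma+1}/(\gamma+1)$ and $\tau\mapsto v^\ast(\tau)$ are locally absolutely continuous, and they vanish as $\tau\to|\Omega|^-$ (because $u|_\Omega\in H^1_0(\Omega)$ forces $u^\ast(|\Omega|^-)=0$, and $v=0$ on $\partial\Omega^\sharp$); integrating the last inequality on $(\tau,|\Omega|)$ yields $(u^\ast(\tau))^{\gamma+1}/(\gamma+1)\le v^\ast(\tau)$, which is exactly \eqref{th:talenti2}.

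\emph{Main difficulty.} The only genuinely new point with respect to the classical (purely local, possibly singular) setting is the nonlocal operator: one \emph{cannot} reduce the problem to a single weak differential inequality of the form $-\Delta\big(u^{\gamma+1}/(\gamma+1)\big)\le f$ against all non-negative test functions, since the bilinear form of $(-\Delta)^s$ is not sign-definite when tested against $u^\gamma\varphi$ for a general $\varphi\ge0$ (and, moreover, $u^{\gamma+1}$ need not lie in $H^1_0(\Omega)$ without extra integrability of $u$). The way around this is precisely to test the equation for $u$ with the \emph{bounded, non-decreasing} truncations $\psi_h(u)$: their monotonicity makes the nonlocal contribution non-negative, hence harmless, exactly as the absorption term, and the power $\gamma+1$ is recovered only afterwards, at the level of the scalar ODE for $u^\ast$. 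The remaining ingredients (coarea formula, isoperimetric inequality, local absolute continuity of rearrangements, Talenti's explicit formula for $v$) are classical.
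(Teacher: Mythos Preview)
Your argument is correct and complete. It follows the same Talenti machinery as the paper (test with level-set truncations, drop the nonlocal and absorption terms by sign, coarea/isoperimetric/H\"older, then integrate the resulting one-dimensional inequality and compare with the explicit formula for $v^\ast$), but there is one genuine methodological difference worth noting.

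The paper does \emph{not} work directly on $u$: it runs the whole symmetrization argument on the bounded approximants $\tilde u_k=u_k+1/k$ solving the regularized problems \eqref{pk}, obtains the pointwise estimate $\tilde u_k^\ast(\tau)^{\gamma+1}\le(\gamma+1)v^\ast(\tau)$, and only then passes to the limit $k\to\infty$ using the contraction property iv) of rearrangements. This detour buys two things: the approximants are in $L^\infty(\Omega)$, so every test function in $\mathcal{X}^{1,2}(\Omega)$ is admissible without further justification, and the singular right-hand side $f_k/(\tilde u_k)^\gamma$ is globally bounded, so no care is needed with dominated convergence. Your route is shorter and more transparent: you test the \emph{limit} equation directly with $\psi_h(u)$, which is legitimate precisely because Lemma~\ref{lem:LargerTest} has already been proved and because the support condition $\psi_h(u)=0$ on $\{u\le t\}$ makes the singular integral trivially bounded by $t^{-\gamma}\|f\|_{L^1}$. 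In effect, you trade the paper's approximation/limit step for a single invocation of Lemma~\ref{lem:LargerTest}; the rest of the argument is identical. Either way one must eventually justify $u^\ast(|\Omega|^-)=0$, which you get (as the paper does for $u_k^\ast$) from $u|_\Omega\in H^1_0(\Omega)$ via P\'olya--Szeg\H{o} and the continuity of radial $H^1$ functions away from the origin.
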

\begin{proof}
We split the proof into different steps. 
  \vspace{0.2cm}

-\,\,\textsc{Step I): Approximating problems.} For every $k\in \mathbb{N}$ and $x\in\Omega$ we consider the following nonsingular approximating problems 
\begin{equation}\label{pk}
\left\{
\begin{array}{ll}
-\Delta u_k+(-\Delta)^s u_k +g_k(u_k)=\dfrac{f_k}{(u_k+\frac{1}{k})^{\gamma}}\qquad& \mbox{in }\Omega
\\
u_k>0& \mbox{in }\Omega
\\
u_k=0& \mbox{on }\mathbb{R}^n\setminus\Omega,
\end{array}
\right.
\end{equation}
where, recalling \eqref{eq:def_Tk} and exploiting that $f\geq 0$, we have set
\begin{align*}
	\mathrm{i)}\,\,&f_k= T_{k}(f) = \min\{f,k\}; \\
	\mathrm{ii)}\,\,&
	g_{k}(s)= \begin{cases}
		T_{k}(g(s)) & \text{if $s\geq 0$}\\
		0 & \text{if $s < 0$}
	\end{cases} 
	\,\,\,= \begin{cases}
		\min\{s^p,k\} & \text{if $s\geq 0$} \\
		0 & \text{if $s < 0$}
	\end{cases}
\end{align*}
By Lemma \ref{eq:Approx_Problem}, for every $k \in \N$ problem \eqref{pk} has a unique nonnegative solution belonging to $\XX^{1,2}(\Omega)\cap L^{\infty}(\Omega)$. Setting $\tilde u_k:=u_k+\frac{1}{k}$, problem \eqref{pk} reads as
\begin{equation*}
\begin{cases}-\Delta \tilde{u}_{k}+(-\Delta)^s\tilde u_k+g_k\left(\tilde u_k-\frac 1 k\right)=\dfrac{f_k}{\tilde{u}_{k}^{\gamma}} & \text { in } \Omega  \\ \tilde{u}_{k}>\frac{1}{k} & \text { in } \Omega \\ \tilde{u}_{k}=\frac{1}{k} & \text { on } \R^n\setminus \Omega .\end{cases}
\end{equation*}
In particular,
\begin{equation}\label{weakapprox}
\begin{split}
\int_\Omega\nabla \tilde u_k(x)\cdot\nabla\varphi(x)\,dx&+\iint_{\R^{2n}}\frac{(\tilde u_k(x)-\tilde u_k(y))(\varphi(x)-\varphi(y))}{|x-y|^{n+2s}}\,dx\,dy\\&+\int_\Omega g_k\left(\tilde u_k(x)-\frac 1 k\right)\varphi(x) \,dx
=\int_{\Omega}\frac{f_k(x)}{\tilde u_k(x)^{\gamma}}\,\varphi(x)\,dx
\end{split}
\end{equation}
for every $\varphi\in \XX^{1,2}(\Omega)$.

\vspace{0.2cm}

Let us fix $\frac{1}{k}< t<\|u_k\|_{L^\infty(\Omega)}$ and $h>0$. We consider the following test function 
$$\varphi(x)=\mathcal{G}_{t,h}(\tilde u_k(x)),$$ 
where $\mathcal{G}_{t,h}(\theta)$ is defined by
\begin{equation}\label{def-G}
\mathcal{G}_{t,h}(\theta)=\begin{cases}
h \qquad & \mbox{if } \theta>t+h\\
\theta-t & \mbox{if } t<\theta\leq t+h\\
0  & \mbox{if } \theta\leq t.
\end{cases}
\end{equation}
We note that $\mathcal{G}_{t,h}(\tilde u_k)\in \XX^{1,2}(\Omega)$, so we can use it the weak formulation of solution \eqref{weakapprox}, obtaining
\begin{equation}\label{b}
\begin{aligned}
\int_{\Omega}\nabla \tilde u_k(x)&\cdot\nabla\mathcal{G}_{t,h}(\tilde u_k(x))\,dx\\
&+\iint_{\R^{2n}}\frac{(\tilde u_k(x)-\tilde u_k(y))\left(\mathcal{G}_{t,h}(\tilde u_k(x))-\mathcal{G}_{t,h}(\tilde u_k(y))\right)}{|x-y|^{n+2s}}\,dx\,dy\\
&+\int_\Omega
g_k\left(\tilde u_k(x)-\frac 1 k \right)\mathcal{G}_{t,h}(\tilde u_k(x))\,dx\\
&=\int_{\Omega}\frac{f_k(x)}{\tilde u_k(x)^{\gamma}}\,\mathcal{G}_{t,h}(\tilde u_k(x))\,dx.
\end{aligned}
\end{equation}
We first deal with the left-hand side in \eqref{b}. To this end we observe that, setting\begin{equation*}
\mathcal{H}:= (\tilde u_k(x)-\tilde u_k(y))\left(\mathcal{G}_{t,h}(\tilde u_k(x))-\mathcal{G}_{t,h}(\tilde u_k(y))\right),
\end{equation*}
by the monotonicty of $\mathcal{G}_{t,h}$ we see that
$\mathcal{H}\geq 0$.
Therefore, 
\begin{equation*}
 \iint_{\R^{2n}}\frac{(\tilde u_k(x)-\tilde u_k(y))\left(\mathcal{G}_{t,h}(\tilde u_k(x))-\mathcal{G}_{t,h}(\tilde u_k(y))\right)}{|x-y|^{n+2s}}\,dx\,dy\geq 0.
\end{equation*}
Moreover, since $g_k\left(\tilde u_k-\frac{1}{k}\right)\geq 0$ and $\mathcal{G}_{h,t}(\tilde u_k)\geq 0$, we have also
\begin{equation*}
    \int_\Omega g_k\left(\tilde u_k-\frac{1}{k}\right)\mathcal{G}_{t,h}(\tilde u_k(x))\,dx\geq 0,
\end{equation*}
thus
\begin{equation}\label{bb}
\int_\Omega\nabla \tilde u_k(x)\cdot\nabla\mathcal{G}_{t,h}(\tilde u_k(x))\,dx\leq \int_{\Omega}\frac{f_k(x)}{\tilde u_k(x)^{\gamma}}\,\mathcal{G}_{t,h}(\tilde u_k(x))\,dx.
\end{equation}
Dividing by $h>0$ the right-hand side of \eqref{bb} and using \eqref{def-G}, we obtain
\begin{equation*}
\begin{aligned}
\dfrac{1}{h} \int_{\Omega}\dfrac{f_k(x)}{\tilde u_k(x)^{\gamma}}\,\mathcal{G}_{t,h}(\tilde u_k(x))\,dx = &\int_{\{\tilde u_k>t+h\}}\dfrac{f_k(x)}{\tilde u_k(x)^{\gamma}}\,dx\\
&\ \ \ \ +\int_{\{t<\tilde u_k\leq t+h\}}\dfrac{f_k(x)}{\tilde u_k(x)^{\gamma}}\left( \frac{\tilde u(x)-t}{h}\right)\,dx\end{aligned}
\end{equation*}
which implies, 
\begin{equation}\label{lim-RHS}
	\begin{aligned}
    \lim_{h\to 0^+}\dfrac{1}{h}& \int_{\Omega}\dfrac{f_k(x)}{\tilde u_k(x)^{\gamma}}\,\mathcal{G}_{t,h}(u_k(x))\,dx =\int_{\{\tilde u_k>t\}}\dfrac{f_k(x)}{\tilde u_k(x)^{\gamma}}\,dx\\
\end{aligned}
\end{equation}
While for the left-hand side of \eqref{bb}, we have
\begin{equation*}
    \int_\Omega\nabla \tilde u_k(x)\cdot\nabla\mathcal{G}_{t,h}(\tilde u_k(x))\,dx=\int_{\{t<\tilde u_k\leq t+h\}}|\nabla \tilde u_k(x)|^2 \,dx
\end{equation*}
and dividing by $h$ and letting $h\to 0$, we get
\begin{equation}\label{lim-LHS}
    \lim_{h\to0}\frac{1}{h}\int_{\{t<\tilde u_k\leq t+h\}}|\nabla \tilde u_k(x)|^2\, dx=-\frac{d}{dt}\int_{\{\tilde u_k>t\}}|\nabla \tilde u_k(x)|^2\, dx.
\end{equation}
Now by applying the Coarea formula we get
$$
\int_{\{\tilde u_k>t\}}|\nabla \tilde u_k(x)| \,dx=\int_t^{\infty} P(\{x \in \Omega: \tilde u_k(x)>r\}) \,d r, \quad \text{for } t>\frac{1}{k}
$$
where $P(\cdot)$ denotes the perimeter. Then, by the isoperimetric inequality, we have
\begin{equation}\label{use-isop}
n \,\omega_n^{1/n}\, \mu_{\tilde u_k}(t)^{1-1/n} \leq P(\{x \in \Omega: \tilde u_k>t\})=-\frac{d}{d t} \int_{\{\tilde u_k>t\}}|\nabla \tilde u_k(x)| \,dx .
\end{equation}
Moreover, H\"older's inequality gives
\begin{equation}\label{use-holder}
\begin{aligned}
-\dfrac{d}{d t}& \int_{\{\tilde u_k>t\}}|\nabla \tilde u(x)| \,dx  =\lim _{h \rightarrow 0} \dfrac{1}{h} \int_{\{t<\tilde u_k \leq t+h\}}|\nabla \tilde u_k(x)| \,dx \\
& \leq \lim _{h \rightarrow 0}\left(\dfrac{1}{h} \int_{\{t<\tilde u_k \leq t+h\}}|\nabla \tilde u_k(x)|^2 \,dx\right)^{1 /2}\left(\dfrac{\mu_{\tilde u_k}(t)-\mu_{\tilde u_k}(t+h)}{h}\right)^{1/2} \\
& =\left(-\dfrac{d}{d t} \int_{\{\tilde u_k>t\}}|\nabla \tilde u_k(x)|^2 \,d x\right)^{1 / 2} \left(-\mu_{\tilde u_k}^{\prime}(t)\right)^{1 /2}
\end{aligned}
\end{equation}
Thus, combing \eqref{use-isop} and \eqref{use-holder}, we obtain
\begin{equation}\label{deriv-mu}
n^2\omega_n^{2/n} \dfrac{\mu_{\tilde u_k}(t)^{2-2/n}}{-\mu_{\tilde u_k}^{\prime}(t)} \leq -\frac{d}{d t} \int_{\{\tilde u_k>t\}}|\nabla \tilde u_k(x)|^2 \,d x.
\end{equation}
Finally, from \eqref{bb}, \eqref{lim-RHS}, \eqref{lim-LHS} and \eqref{deriv-mu}, we deduce
\begin{equation}\label{ineq-u-k}
    n^2\omega_n^{2/n} \dfrac{\mu_{\tilde u_k}(t)^{2-2/n}}{-\mu_{\tilde u_k}^{\prime}(t)} \leq \int_{\{\tilde  u_k>t\}}\dfrac{f_k(x)}{\tilde u_k(x)^\gamma}\,dx\quad \text{for a.e. }t\in\Big(\frac{1}{k},\|u\|_{L^\infty(\Omega)}\Big)
\end{equation}
Then, recalling Hardy-Littlewood inequality and 
that facts that $f_k \leq f$ and $t>\frac 1k$, we have
$$
n^2 \omega_n^{2 / n} \frac{\mu_{\tilde{u}_k}(t)^{2-2 / n}}{\left(-\mu_{\tilde{u}_k}\right)^{\prime}(t)} 
<\frac{1}{t^\gamma} \int_0^{\mu_{\tilde{u}_k}(t)} f_k^\ast(\sigma) \,d \sigma 
\leq \frac{1}{t^\gamma} \int_0^{\mu_{\tilde{u}_k}(t)} f^\ast(\sigma) \,d \sigma
$$
which can be rewritten as 
\begin{equation}\label{bb2-new}
\tilde{u}_k^\ast(\tau)^\gamma\left(-\tilde{u}_k^\ast(\tau)^{\prime}\right) \leq \frac{1}{n^2 \omega_n^{2 / n}} \,\tau^{-2+2 / n} \int_0^\tau f^\ast(\sigma) \,d\sigma, \quad \text { for a.e. } \tau \in(0,|\Omega|).
\end{equation}
Integrating \eqref{bb2-new} on the interval $(\tau,|\Omega|)$, we get
\begin{equation}\label{stima-tilde-u-k}
\tilde{u}_k^\ast(\tau)^{\gamma+1} \leq \frac{\gamma+1}{n^2 \omega_n^{2 / n}} \int_\tau^{|\Omega|} r^{-2+2 / n}\left(\int_0^r f^\ast(\sigma) \,d \sigma\right) \,d r, \quad\text{for a.e. } \tau\in(0,|\Omega|).
\end{equation}

-\,\,\textsc{Step 2): Local symmetrized nonsingular problems.} Let $v$ be the solution to problem \eqref{eq:comp-talenti}. Due to the radial symmetry of $v$, we have 
$$v(x)=v^\sharp(x)=v^*(\omega_n|x|^n).$$
So, arguing as in \textsc{Step 1)}, we find
\begin{equation*}
n^2 \omega_n^{2 / n} \dfrac{\mu_{v}(t)^{2-2/n}}{-\mu_{v}^{\prime}(t)} =\int_{\{v>t\}} f^\sharp(x)\,d x=\int_0^{\mu_v(t)}f^\ast(\sigma)\, d\sigma
\end{equation*}
which can be rewritten as
$$
-v^\ast(\tau)^{\prime} =\frac{1}{n^2 \omega_n^{2 / n}} \tau^{-2+2 / n} \int_0^\tau f^\ast(\sigma) \,d\sigma, \quad \text {for a.e. } \tau \in(0,|\Omega|)
$$
Again, integrating \eqref{bb2-new} on the interval $(\tau,|\Omega|)$, we get
\begin{equation}\label{eq-v}
v^\ast(\tau)= \frac{1}{n^2 \omega_n^{2 / n}} \int_\tau^{|\Omega|} r^{-2+2 / n}\left(\int_0^r f^\ast(\sigma) \,d \sigma\right) \,d r, \quad\text{for a.e. } \tau\in(0,|\Omega|).
\end{equation}
which together \eqref{stima-tilde-u-k} ensures that 
\begin{equation}\label{stima-punt-u-k}
    \tilde u_k^\ast(\tau)^{\gamma+1}\leq (\gamma+1)\,v^\ast(\tau)\quad \text{ for a.e. }\tau \in(0,|\Omega|).
\end{equation}

-\,\,\textsc{Step 3): Passing to the limit as $k \to +\infty$.} By Lemma \ref{lem:First_Approx}, we know that the sequences $u_k$ is bounded in $\XX^{1,2}(\Omega)$. Hence, up to subsequences, $\tilde u_k$ converges to $\tilde u \in \XX^{1,2}(\Omega)$, weakly in $\XX^{1,2}(\Omega)$, strongly in $L^r(\Omega)$ for any $r\in [1,2^\ast)$ and a.e. in $\Omega$, where $u$, is a weak solution to problems \eqref{eq:Model-talenti}. Thus by property iv) of rearrangement, we have also that, up to subsequence, $\tilde u_k^\ast$ converges strongly to $u^\ast$ in $L^r(0,|\Omega|)$, for any $r\in[1,2^\ast)$, and a.e. in $(0,|\Omega|).$ So, passing to the limit as $k\to\infty$ in \eqref{stima-punt-u-k}, we obtain \eqref{th:talenti2}. This closes the proof.
\end{proof}

\begin{proof}[Proof of Theorem \ref{thm:MoreSummability}]
At first, let us suppose that $1<m<\frac{n}{2}$. Setting 
$$\bar{f}(t)=\frac{1}{t} \int_{0}^{t} f^{*}(s) \,ds,$$ 
by \eqref{th:talenti2}, \eqref{eq-v} and applying Bliss inequality (see \cite{Bli}), we obtain
$$
\begin{aligned}
	\int_{0}^{|\Omega|} u^{*}(s)^{\frac{n m(\gamma+1)}{n-2m}} \, d s & \leq\left(\frac{\gamma+1}{n^{2} \omega_{n}^{2 / n}}\right)^{\frac{n m}{n-2m}} \int_{0}^{|\Omega|}\left(\int_{s}^{|\Omega|} t^{-1+2 / n} \bar{f}(t) \,d t\right)^{\frac{n m}{n-2 m}} \, d s \\
	& \leq\left(\frac{\gamma+1}{n^{2} \omega_{n}^{2 / n}}\right)^{\frac{n m}{n-2 m}} C\,\left(\int_{0}^{|\Omega|} \bar{f}(s)^{p} \, d s\right)^{\frac{n}{n-2 m}} \\
	& =\left(\frac{\gamma+1}{n^{2} \omega_{n}^{2 / n}}\right)^{\frac{n m}{n-2 m}} C\,\|f\|_{L^{m}(\Omega)}^{\frac{n m}{n-2 m}}
\end{aligned}
$$
where the constant $C$ is explicitly given by 
$$ C=
\left( \frac{n(m - 1)}{n - 2m} \right)^{\frac{n(m-1)}{n - 2m}} \cdot
\left( 
\frac{
	\Gamma\left(\frac{n}{2}\right)
}{
	\Gamma\left( \frac{n}{2m} \right)
	\Gamma\left( \frac{n(m - 1)}{2m} + 1 \right)
}
\right)^{\frac{2m}{n - 2m}}.$$ This proves \eqref{coroll-1}.
\medskip

Now let $m>\frac{n}{2}$. By combining the one-dimensional Hardy inequality with H\"older inequality (with exponent $m$ and $m/(m-1)$), we get
$$
\begin{aligned}
	u^{*}(0)^{\gamma+1} & \leq \frac{\gamma+1}{n^{2} \omega_{n}^{2 / n}} \int_{0}^{|\Omega|} t^{-2+2 / n}\left(\int_{0}^{t} f^{*}(r) \,d r\right) \,d t \\
	& \leq \frac{\gamma+1}{n^{2} \omega_{n}^{2 / n}}\Big[\frac{m}{m-1}\left(\frac{n(m-1)}{2 m-n}\right)^{\frac{m-1}{m}}\Big]|\Omega|^{\frac{2 m-n}{n m}}\|f\|_{L^{m}(\Omega)}
\end{aligned}
$$
which is precisely \eqref{coroll-2}.
\medskip

Finally, suppose $m=\frac{n}{2}$. Then, again by H\"older inequality, we have
$$
\begin{aligned}
	u^{*}(s)^{\gamma+1} & \leq \frac{\gamma+1}{n^{2} \omega_{n}^{2 / n}}\left(\int_{s}^{|\Omega|} \bar{f}(t)^{n / 2} \, d t\right)^{2 / n}\left(\int_{s}^{|\Omega|} \frac{1}{t} \, d t\right)^{\frac{n-2}{n}} \\
	& \leq \frac{\gamma+1}{n^{2} \omega_{n}^{2 / n}}\left(\log \frac{|\Omega|}{s}\right)^{\frac{n-2}{n}}\|f\|_{L^{n / 2}(\Omega)}, \quad \text{for a.e. }s \in(0,|\Omega|)
\end{aligned}
$$
or, equivalently,
$$
\frac{u^{*}(s)}{\left(\log \frac{|\Omega|}{s}\right)^{\frac{n-2}{n(\gamma+1)}}} \leq\left(\frac{\gamma+1}{n^{2} \omega_{n}^{2 / n}}\right)^{\frac{1}{\gamma+1}}\|f\|_{L^{n / 2}(\Omega)}^{\frac{1}{\gamma+1}}, \quad \text{for a.e. }s \in(0,|\Omega|) .
$$
By \cite[Proposition 2.2]{BCT}, we get $u \in L_{\exp t^{\frac{n(\gamma+1)}{n-2}}}(\Omega)$ and \eqref{coroll-3} holds true. 
\end{proof}

\appendix
\section{Concerning Assumption \eqref{eq:assumptionMeasurefzero}}\label{appendix}
As is clear from the proof of Theorem \ref{thm:Existence} (where it 
is used in place of the Dominated Convergence Theorem to handle 
the nonlocal term \eqref{eq:limitnNonlocalpart}), assumption 
\eqref{eq:assumptionMeasurefzero} is purely technical and plays 
no essential role. Although it is general enough to include the `model' singularity
$$h(u) = \frac{1}{u^\gamma},$$
we now briefly outline, for the sake of completeness, how to prove the existence of a (unique) 
solution in the more general `model' case
$$\frac{f(x)}{u^\gamma},$$
where $f$ may vanish on a set of positive measure. In this broader setting, however, we must consider the 
`model' absorption term
$$g(s) = s^q.$$
Let then $0\leq \gamma\leq 1$ be fixed, and let $f\in L^m(\Omega)\,(\text{with $m > 1$}),\,\text{$f\geq 0$ a.e.\,in $\Omega$}$.
Moreover, according to assumption (H)$_g$
we
let $q\geq 0$
be such that
\begin{equation} \label{eq:assqgeqh2}
	q\geq \frac{1-m\gamma}{m-1}.
\end{equation}
Finally, \emph{only in this part of the appendix} we also suppose that
\begin{align} 
	\ast)&\,\,q\leq 2^\ast-1 \label{eq:qsubcriticalModello} \\
	\ast)&\,\,\text{there exists $\delta \in (0,1)$ s.t.\,$f\in C^{\delta}_{\mathrm{loc}}(\Omega)$}
\end{align}
(notice that the validity of \emph{both} \eqref{eq:assqgeqh2} and \eqref{eq:qsubcriticalModello} 
`encodes'
an implicit assumptions on $m$ and $\gamma$).
Under such assumptions, our aim is to demonstrate that
there exists a solution
(in the sense of Definition \ref{def:Distributional_Solution}) of 
the `model' problem \eqref{eq:ModelProblem}.
\vspace*{0.05cm}

To this end, for every $k,n\in\mathbb{N}$, we start considering the truncated problem
\begin{equation}\label{eq:Approx_Problem_Specifico}
	\begin{cases}
		\mathcal{L}u + g_{k}(u) = \dfrac{f_n}{(u+1/n)^\gamma} & \textrm{in } \Omega,\\
		u \geq 0 & \textrm{in } \Omega,\\
		u = 0 & \textrm{in } \mathbb{R}^{n}\setminus \Omega,
	\end{cases}
\end{equation}  
where (as in the general case) we have set
\begin{align*}
	\mathrm{i)}\,\,&f_n= T_{n}(f); \\
	\mathrm{ii)}\,\,&
	g_{k}(s)= \begin{cases}
		T_{k}(g(s)) & \text{if $s\geq 0$}\\
		0 & \text{if $s < 0$}
	\end{cases} 
	\,\,\,= \begin{cases}
		\min\{s^p,k\} & \text{if $s\geq 0$} \\
		0 & \text{if $s < 0$}.
	\end{cases}
\end{align*}
Since the `regularized' function $\mathcal{R}_n(t) = (t+1/n)^\gamma$ (with $t\geq 0$)
clearly satisfies the same `qua\-li\-tative' pr\-o\-per\-ties of the truncation $h_n$ defined in 
\eqref{eq:fngnhnGeneral}, namely
\begin{equation}\label{eq:propRnPerLemma}
	\text{$\mathcal{R}_n\geq 0$ and $\mathcal{R}_n\in L^\infty(\Omega)$},
\end{equation}
we can certainly exploit Lemmas \ref{lem:First_Approx}\,-\,\ref{lem:propun} 
also in this context (notice that the proof of such lemmas only relies on
\eqref{eq:propRnPerLemma},
and not on the explicit definition of $h_n$). Thus, for every fixed $n\in\mathbb{N}$,
we can
find a function $u_n\in\mathcal{X}^{1,2}(\Omega)$, obtained
as the limit as $k\to+\infty$ of $u_{k,n}$ (the latter being a solution
of
\eqref{eq:Approx_Problem_Specifico}),
such that
\begin{itemize}
	\item[(P1)] $u_n\in L^\infty(\Omega)$;
	\item[(P2)] $u_n$ is a weak solution (in the sense of Definition \ref{def:Distributional_Solution}) of
	\begin{equation*}
		\begin{cases}
			\mathcal{L}u+ u^q  = \dfrac{f_n}{(u+1/n)^\gamma} & \textrm{in } \Omega,\\
			u \geq 0 & \textrm{in } \Omega,\\
			u = 0 & \textrm{in } \mathbb{R}^{n}\setminus \Omega,
		\end{cases}
	\end{equation*}  
	that is, $u_n\geq 0$ a.e.\,in $\Omega$ and
	\begin{equation} \label{eq:WeakFormPbApproxnModello}
		\mathcal{B}(u_n,\varphi)+\int_\Omega u_n^q\varphi\,dx =\int_\Omega \dfrac{f_n}{(u_n+1/n)^\gamma}\varphi\,dx\quad 
		\text{for every $\varphi\in \mathcal{X}^{1,2}(\Omega)$}.
	\end{equation}
	\item[(P3)] there exists a constant $C>0$, independent of $n$, such that
	\begin{align}
		&\rho(u_n)^2 + \|u_n^{q+1}\|_{L^{1}(\Omega)} \leq C;  \label{eq:Apriori_estimatesModello} \\
		&\int_{\Omega}\dfrac{f_n}{(u_n+1/n)^\gamma}\varphi\,dx
		\leq C + \rho(\varphi)^2 + C\|\varphi\|_{L^{\infty}(\Omega)}.
		\label{eq:Apriori_estimates_2Modello}
	\end{align}
\end{itemize}
In view of these facts, we can then proceed exactly as in the \emph{incipit} of the proof of Theorem
\ref{thm:Existence}, and find a function 
$u\in\mathcal{X}^{1,2}(\Omega)$ such that (up to a subsequence)
\begin{itemize}
	\item[a)] $u_n \to u$ weakly in $\mathcal{X}^{1,2}(\Omega)$;
	\item[b)] $u_n \to u$  strongly in $L^{r}(\Omega)$ for every $1\leq r<2^{\ast}$;
	\item[c)] $u_n\to u$ pointwise a.e.\,in $\Omega$ (hence, in $\R^n$);
	\item[d)] $g(u) = u^q\in L^1(\Omega)$.
\end{itemize} 
In order to pass to the limit as $n\to+\infty$ in 
\eqref{eq:WeakFormPbApproxnModello} (and thus to prove
that $u$ solves problem \eqref{eq:ModelProblem}), instead, we rely on the following lemma.
\begin{lemma} \label{lem:propunModelCase}
	The following assertions hold.
	\begin{enumerate}
		\item the sequence $\{u_n\}_n$ is non-decreasing in $\Omega$;
		\item for every compact set $K\subseteq\Omega$ there exists $c = c_K > 0$ such that
		\begin{equation} \label{eq:staccadazero}
			\text{$u_n\geq c_K$ a.e.\,in $K$}.
		\end{equation}
	\end{enumerate}
\end{lemma}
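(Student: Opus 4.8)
The plan is to prove the two assertions separately, relying heavily on the comparison principle associated with $\mathcal{L}$ (available, e.g., through \cite{BFV, BDVV}) and on the structure of the approximating problems \eqref{eq:Approx_Problem_Specifico}.

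For assertion (1), I would show $u_n \leq u_{n+1}$ a.e.\ in $\Omega$ by a comparison argument at the level of the limit problems in (P2). Fix $n$ and compare $u_n$ and $u_{n+1}$: both belong to $\mathcal{X}^{1,2}(\Omega)\cap L^\infty(\Omega)$, and satisfy, in the weak sense of \eqref{eq:WeakFormPbApproxnModello},
$$\mathcal{L}u_n + u_n^q = \frac{f_n}{(u_n+1/n)^\gamma}, \qquad \mathcal{L}u_{n+1} + u_{n+1}^q = \frac{f_{n+1}}{(u_{n+1}+1/(n+1))^\gamma}.$$
Subtract the two identities tested against $\varphi = (u_n - u_{n+1})^+ \in \mathcal{X}^{1,2}(\Omega)$ (a legitimate test function by Remark \ref{rem:spaceX12}-1) and since $f_n, f_{n+1}, u_n, u_{n+1} \in L^\infty$). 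On the set $\{u_n > u_{n+1}\}$ one has, using $f_n \leq f_{n+1}$, $1/n > 1/(n+1)$, and the monotonicity of $t \mapsto t^{-\gamma}$,
$$\frac{f_n}{(u_n+1/n)^\gamma} \leq \frac{f_{n+1}}{(u_{n+1}+1/(n+1))^\gamma},$$
so the right-hand side contributes a non-positive term; the absorption term $(u_n^q - u_{n+1}^q)(u_n-u_{n+1})^+ \geq 0$ and the quadratic form $\mathcal{B}((u_n-u_{n+1})^+,(u_n-u_{n+1})^+)\geq 0$ force $\nabla(u_n-u_{n+1})^+ = 0$, hence $(u_n-u_{n+1})^+ \equiv 0$. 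For the base step $u_1 \leq u_2$ one may need $u_1 \geq 0$, which is part of (P2); monotonicity then propagates.

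For assertion (2), I would first observe that, since $\{u_n\}$ is non-decreasing, it suffices to bound $u_1$ (or $u_{n_0}$ for a fixed small $n_0$) away from zero on compact sets. The function $w := u_1$ is a non-negative weak solution of $\mathcal{L}w + w^q = f_1/(w+1)^\gamma \geq 0$ in $\Omega$ with $w \not\equiv 0$ (it is strictly positive, by (P2)); moreover, the right-hand side $f_1/(w+1)^\gamma$ is, in the model case treated in the appendix, locally Hölder continuous (as $f \in C^\delta_{\mathrm{loc}}(\Omega)$ and $w \in L^\infty$, so by interior regularity for $\mathcal{L}$ — see \cite{BDVV, SVWZ, DeFMin} — one can bootstrap $w$ to be continuous, indeed $C^{1,\alpha}$, locally). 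Thus $w$ is a continuous supersolution of the linear equation $\mathcal{L}w \geq -w^q \geq -\|w\|_\infty^q =: -M$ in $\Omega$. Apply the weak Harnack inequality / strong maximum principle for $\mathcal{L}$ on supersolutions (available through \cite{BDVV, GarainKinnunen} or the Green-function estimates of \cite{CKSV}): on any ball $B_{2r}(x_0)\Subset\Omega$ one gets $\inf_{B_r(x_0)} w \geq c(r)\big(\fint_{B_{2r}} w - C M r^2\big)$ or, more directly, since $w > 0$ everywhere and $w$ is continuous, $\min_{\overline{K}} w =: c_K > 0$ for every compact $K \subseteq \Omega$ — this already suffices. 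Finally, since $u_n \geq u_1$ for all $n$, we conclude $u_n \geq c_K$ a.e.\ in $K$.

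The main obstacle I anticipate is the \emph{comparison step in (1)}: one must be careful that the singular right-hand sides are genuinely ordered on the relevant set and that testing with $(u_n-u_{n+1})^+$ is justified — this is where the global $L^\infty$ bounds from Lemma \ref{lem:First_Approx}-(P1) and the fact that $(-\Delta)^s$ contributes non-negatively to $\mathcal{B}(\varphi^+,\varphi^+)$ (Remark \ref{rem:spaceX12}) are essential; for the nonlocal part one uses the elementary inequality $(a-b)\big((a-b)^+ - \text{(its value at another point)}\big) \geq ((a-b)^+ \text{-difference})^2 \geq 0$ exactly as in the proof of Lemma \ref{lem:First_Approx}-i). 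A secondary subtlety is invoking interior continuity of $u_1$ in assertion (2); this is precisely the point where the extra appendix hypothesis $f \in C^\delta_{\mathrm{loc}}(\Omega)$ together with $q \leq 2^\ast - 1$ enters, allowing the bootstrap and the application of the strong minimum principle for $\mathcal{L}$.
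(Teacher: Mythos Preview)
Your proposal is correct and follows essentially the same approach as the paper: for (1) both you and the paper test the difference of the two equations with $(u_n-u_{n+1})^+$ (equivalently $(u_{n+1}-u_n)^-$) and exploit the monotonicity of $s\mapsto s^q$ and $s\mapsto (s+1/n)^{-\gamma}$ together with the nonlocal sign computation from Lemma~\ref{lem:First_Approx}; for (2) both reduce to bounding $u_1$ from below via a regularity bootstrap (enabled precisely by $q\leq 2^\ast-1$ and $f\in C^\delta_{\mathrm{loc}}$) followed by a strong maximum principle. The only differences are cosmetic: the paper pushes regularity up to $C^{2,\alpha}_{\mathrm{loc}}$ and proves the SMP by a direct pointwise contradiction at a hypothetical interior zero, whereas you invoke the SMP from the literature; also note that your parenthetical ``(it is strictly positive, by (P2))'' is not quite right---(P2) only gives $u_1\geq 0$---so the strict positivity genuinely requires the SMP step you outline afterward.
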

\begin{proof}
	We prove the two assertions separately.
	\medskip
	
	(1)\,\,Let $n\in\mathbb{N}$ be fixed, and let $v_n = u_{n+1}-u_n$.
	First of all,
	by using 
	$$\varphi = v_n^- = (u_{n+1}-u_n)^-\in\mathcal{X}^{1,2}(\Omega)$$
	as a test func\-tion in the weak formulation \eqref{eq:WeakFormPbApproxnModello}
	for $u_n$ and $u_{n+1}$, respectively,
	and by subtracting the corresponding identities, we derive 
	\begin{equation} \label{eq:doveusareisegniMonotone}
		\begin{split}
			& \int_{\Omega}|\nabla(u_{n+1}-u_n)^-|^2\,dx
			- \iint_{\R^{2n}}\frac{(v_n(x)-v_n(y))(v_n^-(x)-v_n^-(y))}{|x-y|^{n+2s}}\,dx\,dy
			\\
			&\quad = \mathcal{B}(u_n,\varphi)-\mathcal{B}(u_{n+1},\varphi)
			\\
			& \quad
			= \int_{\{u_n\geq u_{n+1}\}}\big(u_n^q-u_{n+1}^q\big)v_n\,dx\\
			&\quad\quad -\int_{\{u_n\geq u_{n+1}\}}f_n\Big(\frac{1}{(u_n+1/n)^\gamma}
			- \frac{1}{(u_{n+1}+1/(n+1))^\gamma}\Big)v_n\,dx.
		\end{split}
	\end{equation}
	We then observe that, by the computations in Lemma \ref{lem:First_Approx}, we get
	\begin{equation} \label{eq:nonlocalsignMonotone}
		\begin{split}
			& -\big(v_n(x)-v_n(y)\big)\big(v_n^-(x)-v_n^-(y)\big)
			\geq (v_n(x)-v_n(y))^2\geq 0.
		\end{split}
	\end{equation}
	Moreover, since the map $s\mapsto g(s) = s^q$ is increasing in $(0,+\infty)$, we have
	\begin{equation} \label{eq:signuqMonotone}
		\begin{split}
			\big(u_n^q-u_{n+1}^q\big)v_n = (g(u_n)-g(u_{n+1}))(u_{n+1}-u_{n})\leq 0.
		\end{split}
	\end{equation}
	Finally, since the map $s\mapsto\mathcal{R}_{n}(s) = (s+1/n)^{-\gamma}$ is decreasing
	in $(0,+\infty)$, and since (by definition) $f_n = T_n(f)\geq 0$ in $\Omega$,
	for a.e.\,$x\in\{u_n\geq u_{n+1}\}$ we also have
	\begin{equation} \label{eq:singSingularMonotone}
		\begin{split}
			& -f_n\Big(\frac{1}{(u_n+1/n)^\gamma}
			- \frac{1}{(u_{n+1}+1/(n+1))^\gamma}\Big)v_n \\[0.2cm]
			& \qquad (\text{since $v_n = u_{n+1}-u_{n}\leq 0$}) \\
			& \qquad \geq f_n\Big(\frac{1}{(u_n+1/n)^\gamma}
			- \frac{1}{(u_{n+1}+1/n)^\gamma}\Big)(u_n-u_{n+1}) \\
			& \qquad = f_n\big(\mathcal{R}_n(u_n)-\mathcal{R}_n(u_{n+1})\big)(u_n-u_{n+1})\leq 0.
		\end{split}
	\end{equation}
	Gathering \eqref{eq:nonlocalsignMonotone}-\eqref{eq:singSingularMonotone}, from
	\eqref{eq:doveusareisegniMonotone} we thus obtain
	\begin{align*}
		& 0\leq \int_{\Omega}|\nabla(u_{n+1}-u_n)^-|^2\,dx \\
		& \qquad
		\leq \int_{\Omega}|\nabla(u_{n+1}-u_n)^-|^2\,dx
		- \iint_{\R^{2n}}\frac{(v_n(x)-v_n(y))(v_n^-(x)-v_n^-(y))}{|x-y|^{n+2s}}\,dx\,dy\leq 0.
	\end{align*}
	As a consequence, we conclude that
	$$\int_{\Omega}|\nabla(u_{n+1}-u_n)^-|^2\,dx  = 0,$$
	and this shows that $u_{n+1}\geq u_n$ a.e.\,in $\Omega$, as desired.
	\medskip
	
	(2)\,\,First of all we observe that, since \emph{we have already proved}
	that$\{u_n\}_n$ is non-de\-cre\-asing in $\Omega$,
	it suffices to prove \eqref{eq:staccadazero} for $u_1$.
	\vspace*{0.05cm}
	
	To this end, we claim that there exists
	$\alpha\in (0,1)$ such that
	\begin{equation} \label{eq:regulu1SMP}
		u_1\in C^{1,\alpha}(\overline{\Omega})\cap C^{2,\alpha}_{\mathrm{loc}}(\Omega).
	\end{equation}
	Indeed, by property (P2) we have that $u_1\in\mathcal{X}^{1,2}(\Omega)$ is a solution of
	$$\begin{cases}
		\mathcal{L} u = \mathcal{F}(x,u) & \text{in $\Omega$}, \\
		u = 0 & \text{in $\R^n\setminus\Omega$}
	\end{cases}\qquad\text{where $\mathcal{F}(x,t) = \frac{f_1(x)}{(|t|+1)^\gamma}-|t|^q$}.$$
	Thus, since
	$\mathcal{F}$ satisfies the following growth estimate (by the assumption on $q$)
	\begin{align*}
		|\mathcal{F}(x,t)|&\leq |f_1(x)|+t^q = T_1(f)+|t|^q\\
		& (\text{since $T_1(f) = \min\{f,1\}$}) \\
		& \leq 1+|t|^q\leq 2(1+|t|^{2^\ast-1}),
	\end{align*}
	we derive from \cite[Theorem 1.3]{SVWZ2} 
	that  
	\begin{equation} \label{eq:u1C1alfa}
		\text{$u_1\in C^{1,\alpha}(\overline{\Omega})$ for a suitable $\alpha\in (0,1)$}
	\end{equation}
	(notice that here we are also using the fact that $\de\Omega$ is smooth).
	
	On the other hand, since $f\in C^{\delta}_{\mathrm{loc}}(\Omega)$,
	it is easily seen that $\mathcal{F}$ is locally H\"older-con\-ti\-nuous on $\Omega\times\R$
	(for a suitable exponent depending on $\gamma$ and $\delta$). Thus,
	since $u_1\in L^\infty(\Omega)$ (see property I)), we can apply also 
	\cite[Theorem 1.5]{SVWZ2}, obtaining 
	\begin{equation} \label{eq:u1C2alfaloc}
		u\in C^{2,\alpha}_{\mathrm{loc}}(\Omega)
	\end{equation}
	(up to shrinking $\alpha$ if needed). Gathering \eqref{eq:u1C1alfa}-\eqref{eq:u1C2alfaloc}, we then
	get \eqref{eq:regulu1SMP}.
	\vspace*{0.1cm}
	
	Now we have proved \eqref{eq:regulu1SMP}, we can easily conclude
	the demonstration of \eqref{eq:staccadazero}. Indeed, using \eqref{eq:regulu1SMP},
	we see that $u_1$ is a \emph{classical solution} of 
	\begin{equation} \label{eq:PbEsplicitou1}
		\begin{cases}
			\mathcal{L}u+ u^q  = \dfrac{f_1}{(u+1)^\gamma} & \textrm{in } \Omega,\\
			u \geq 0 & \textrm{in } \Omega,\\
			u = 0 & \textrm{in } \mathbb{R}^{n}\setminus \Omega,
		\end{cases}
	\end{equation}
	In particular, we have $u_1 = 0$ pointwise in $\R^n\setminus\Omega$, and $u_1\gneqq 0$
	pointwise in $\Omega$ (notice that we necessarily have $u_1\not\equiv 0$, since
	the zero function \emph{does not solve} \eqref{eq:PbEsplicitou1}).
	
	We then argue by contradiction, supposing that there exists $x_0\in\Omega$ such that $u_1(x_0) = 0$
	(so that $x_0$ is an minimum point for $u_1$ in $\Omega$). Thus, we have
	\begin{align*}
		& (-\Delta)^s u(x_0) \geq  -\Delta u(x_0) + (-\Delta)^s u(x_0) \\
		& \qquad = -u(x_0)^q+\frac{f_1(x_0)}{(u_1(x_0)+1)^\gamma} 
		\\
		& \qquad = \frac{f_1(x_0)}{(u_1(x_0)+1)^\gamma} \geq 0;
	\end{align*}
	on the other hand, we also have
	$$0\leq 2\,\mathrm{P.V.}\,\int_{\R^n}\frac{u(x_0)-u(y)}{|x_0-y|^{n+2s}}\,dy\leq 0,$$
	and hence $u(y) = u(x_0) = 0$ for every $x\in\Omega$, which is a contradiction.
	
	Summing up, we have proved that $u_1 > 0$ pointwise in $\Omega$, and
	this immediately implies the desired \eqref{eq:staccadazero}
	(in view of the regularity of $u_1$ in $\Omega$).
\end{proof}
As anticipated, by exploiting Lemma \ref{lem:propunModelCase} we can easily prove
that the function $u\in\mathcal{X}^{1,2}(\Omega)$, obtained
as the limit as $n\to+\infty$ of the sequence $\{u_n\}_n$ (which, in this case,
exists without the need
of passing to a sub-sequence, see Lemma \ref{lem:propun}-(1))
is a weak solution of the `model' problem \eqref{eq:ModelProblem}.
\vspace*{0.1cm}

Indeed, we already know from a) and d) that $u\in\mathcal{X}^{1,2}(\Omega)$ and
$u^q\in L^1(\Omega)$. Moreover, given any 
$\varphi\in C_0^\infty(\Omega)$ such that
$\varphi\geq 0$ in $\Omega$, the weak convergence
of $u_n$ to $u$ in $\mathcal{X}^{1,2}(\Omega)$ gives (as in the
proof of Theorem \ref{thm:Existence})
$$\lim_{n\to+\infty}\mathcal{B}(u_n,\varphi) = \mathcal{B}(u,\varphi).$$
We now observe that, as regards the absorption term, a direct application
of the Fatou Lemma  based on Lemma \ref{lem:propun}-(1) gives
$$\lim_{n\to+\infty}\int_\Omega u_n^q\varphi\,dx = \int_\Omega u^q\varphi\,dx.$$
Furthermore, by Lemma \ref{lem:propun}-(2) we get
$$\frac{f_n}{(u_n+1/n)^\gamma}\varphi\leq \frac{f_n}{u_n^\gamma}\varphi\leq 
c_{\mathrm{supp}(\varphi)}^{-\gamma}f\varphi;$$
thus, we can apply the Dominated Convergence  Theorem, obtaining
$$\lim_{n\to+\infty}\int_{\Omega}\frac{f_n}{(u_n+1/n)^\gamma}\,dx
= \int_\Omega\frac{f}{u^\gamma}\,dx$$
(where we have also used the fact that $f_n = T_n(f)\to f$ a.e.\,in $\Omega$).
\vspace*{0.1cm}

Gathering all these facts, we can then pass to the limit as $n\to+\infty$
in \eqref{eq:WeakFormPbApproxnModello}, 
thus concluding that $u$ is a weak solution of problem \eqref{eq:ModelProblem},
as desired.
\vspace*{0.1cm}

We explicitly notice that, since $\{u_n\}_n$ is non-decreasing, by Lemma \ref{lem:propun}-(2)
(ap\-pl\-ied here with $n = 1$) we derive that $u\geq u_1 > 0$ a.e.\,in $\Omega$.
\begin{remark} \label{rem:GeneralAppendix}
	It is worth highlighting that the above computations can be carried out also for the more general problem \eqref{eq:Problem}, provided that
	$g$ and $h$ fulfill, together with (H)$_h$ and (H)$_g$, the following \emph{additional assumptions}:
	\begin{enumerate}
		\item $g$ is \emph{non-decreasing} and locally H\"older-continuous on $[0,+\infty)$; moreover, there exists a positive constant $C > 0$ such that	
$$0\leq g(s)\leq C(1+s^{2^*-1})\quad \text{for all $s\geq 0$}.$$
\item $h$ is \emph{non-increasing} and locally H\"older-continuous on $(0,+\infty)$.
\end{enumerate}
	Under these assumptions, one can then prove
	the existence of a solution of problem \eqref{eq:Problem} without requiring \eqref{eq:assumptionMeasurefzero} on the function $f$.
	\end{remark}


\begin{thebibliography}{99}
 
\bibitem{AC}
N. Abatangelo, M. Cozzi,
{\em An elliptic boundary value problem with fractional nonlinearity},
SIAM J. Math. Anal. {\bf 53}(3), (2021), 3577--3601. 

\bibitem{AnGa}
G.C. Anthal, P. Garain,
{\em Variational characterization of a singular mixed local and nonlocal semilinear elliptic equation and its application to decomposition, symmetry and jumping problem},
preprint. \url{https://arxiv.org/abs/2408.09924}

\bibitem{AntoCozzi}
C.A. Antonini, M. Cozzi,
{\em Global gradient regularity and a Hopf lemma for quasilinear operators of mixed local-nonlocal type},
J. Differential Equations {\bf 425}, (2025), 342--382. 

\bibitem{ArNgRa}
R. Arora, P.-T. Nguyen, V.D. Radulescu,
{\em A large class of nonlocal elliptic equations with singular nonlinearities},
preprint. \url{https://arxiv.org/abs/2211.06634}

 
\bibitem{ArRa}
R. Arora, V.D. Radulescu,
\emph{Combined effects in mixed local-nonlocal stationary problems}, 
Proc. Roy. Soc. Edinburgh Sect. A, (2023), pp. 1--47. \url{https://doi.org/10.1017/prm.2023.80}

\bibitem{bahrouni}
S. Bahrouni,
{\em Symmetrization for Mixed Operators},
Ann. Math. Sil. {\bf 39}(1), (2025), 64--75.

\bibitem{BdBMP}
B. Barrios, I. de Bonis, M. Medina, I. Peral, 
{\em Semilinear problems for the fractional laplacian with a singular nonlinearity}, 
Open Math. {\bf 13}, (2015), 390--407.


\bibitem{BhGh}
S. Bhowmick, S. Ghosh,
{\em Existence results for mixed local and nonlocal elliptic equations involving singularity and nonregular data},
preprint. \url{https://arxiv.org/abs/2410.04441}

\bibitem{BDVV}
S. Biagi, S. Dipierro, E. Valdinoci, E. Vecchi,
{\em Mixed local and nonlocal elliptic operators: regularity and maximum principles}, 
Comm. Partial Differential Equations {\bf 47} (3) (2022), 585--629.


 
 
\bibitem{BDVV5}
S. Biagi, S. Dipierro, E. Valdinoci, E. Vecchi,
{\em A Brezis-Nirenberg type result for mixed local and nonlocal operators}, 
 NoDEA Nonlinear Differential Equations Appl. {\bf 32}, (2025), art. 62.
 
\bibitem{BEMV}
S. Biagi, F. Esposito, L. Montoro, E. Vecchi,
{\em On mixed local-nonlocal problems with Hardy potential},
published online (2025), Proc. Roy. Soc. Edinburgh Sect. A. 
\url{https://doi.org/10.1017/prm.2025.28}


%

\bibitem{BiagiVecchi} 
S. Biagi, E. Vecchi,
{\em Multiplicity of positive solutions for mixed local-nonlocal singular critical problems}, 
Calc. Var. Partial Differ. Equ. {\bf 63}, (2024), Article 221.

\bibitem{BiagiVecchi2} 
S. Biagi, E. Vecchi,
{\em On the existence of a second positive solution to mixed local-nonlocal concave-convex critical problems}, 
Nonlinear Anal. {\bf 256}, (2025), 113795.


\bibitem{Biroud}
K. Biroud, 
{\em Mixed local and nonlocal equation with singular nonlinearity having variable exponent},
J. Pseudo-Differ. Oper. Appl. {\bf 14}(1), (2023), Paper No. 13, 24 pp.

\bibitem{BiGa}
S. Biswas, P. Garain
{\em Regularity and existence for semilinear mixed local-nonlocal equations with variable singularities and measure data},
Commun. Contemp. Math. 2550028 (2025), 39pp.

\bibitem{BMS}
A. Biswas, M. Modasiya, A. Sen,
{\em Boundary regularity of mixed local-nonlocal operators and its application}, 
Ann. Mat. Pura Appl. {\bf 202}, (2023), 679--710.

\bibitem{Bli}
G.A. Bliss,
{\em An Integral Inequality},
J. London Math. Soc. {\bf 5}(1), (1930), 40--46.

\bibitem{BoGa}
L. Boccardo, T. Gallou\"{e}t, 
{\em Nonlinear elliptic equations with right-hand side measurea}, Comm. Partial Differential Equations {\bf 17}, (1992), 641--655.

\bibitem{BoGaVa}
L. Boccardo, T. Gallou\"{e}t, J.L. V\'{a}zquez, 
{\em Nonlinear elliptic equations in $\mathbb{R}^{N}$ without growth restrictions on the data}, 
J. Differential Equations {\bf 105}, (1993), 334--363.

\bibitem{BO}
L. Boccardo, L. Orsina, 
{\em Semilinear elliptic equations with singular nonlinearities}, Calc. Var. Partial Differ. Equ. {\bf 37}, (2010), 363--380.

\bibitem{BOP}
L. Boccardo, L. Orsina, I. Peral, 
{\em A remark on existence and optimal summability of solutions of elliptic problems involving Hardy potential}, 
Discrete Contin. Dyn. Syst. {\bf 16}(3), (2006), 513--523.

\bibitem{BFV}
M. Bonforte, A. Figalli, J.L. V\'{a}zquez,
{\em Sharp boundary behaviour of solutions to semilinear nonlocal elliptic equations},
Calc. Var.  Partial Differ. Equ. {\bf 57}, (2018), Article 57.

\bibitem{BCP}
J.-M. Bony, P. Courrège, P. Priouret, 
{\em Semi-groupes de Feller sur une variété à bord compacte et problèmes aux limites intégro-différentiels du second ordre donnant lieu au principe du maximum},
Ann. Inst. Fourier (Grenoble) {\bf 18}(2), (1968), 369--521.


\bibitem{BCT}
B. Brandolini, F. Chiacchio, C. Trombetti, 
{\em Symmetrization for singular semilinear elliptic equations},
Ann. Mat. Pura Appl. {\bf 193}, (2014), 389--404.




\bibitem{BrezisKato}
H. Brezis, T. Kato, 
{\em Remarks on the Schr\"{o}dinger operator with singular complex potentials}, 
J. Math. Pures Appl. {\bf 58}, (1979), 137--151.

\bibitem{BySo}
S.-S. Byun, K. Song, 
{\em Mixed local and nonlocal equations with measure data},
Calc. Var. Partial Differential Equations  {\bf 62}, (2023), Article 14.


\bibitem{CalZyg}
A.P. Calderon, A. Zygmund, 
{\em On the existence of certain singular integrals}, 
Acta Math. {\bf 88}, (1952), 85--139.


\bibitem{Cancelier}
C. Cancelier, 
{\em Problèmes aux limites pseudo-différentiels donnant lieu au principe du maximum},
Comm. Partial Differential Equations {\bf 11}(15), (1986), 1677--1726.

\bibitem{CaDe}
 A. Canino, M. Degiovanni, 
 {\em A variational approach to a class of singular semilinear elliptic equations},
J. Convex Anal. {\bf 11}, (2004), 147--162.

\bibitem{CGS}
A. Canino, M. Grandinetti, B. Sciunzi,
{\em Symmetry of solutions of some semilinear elliptic equations with singular nonlinearities},
J. Differential Equations {\bf 255}(12), (2013), 4437--4447.
 
\bibitem{CMSS}
A. Canino, L. Montoro, B. Sciunzi, M. Squassina, 
{\em Nonlocal problems with singular nonlinearity},
Bull. Sci. Math. {\bf 141}(3), (2017), 223--250. 

\bibitem{CMST}
A. Canino, L. Montoro, B. Sciunzi, A. Trombetta,
{\em Variational properties of nonlocal singular problems},
Nonlinearity {\bf 36}(8), (2023), 4034--4052.
 
 

\bibitem{CKSV}
 Z.-Q. Chen, P. Kim, R. Song, Z. Vondra\v{c}ek, 
 {\em Boundary Harnack principle for $\Delta + \Delta^{\alpha/2}$}, 
 Trans. Amer. Math. Soc. {\bf 364}(8), (2012), 4169--4205. 

\bibitem{Cirmi}
G.R. Cirmi,
{\em Regularity of the solutions to nonlinear elliptic equations with a lower-order term},
Nonlinear Anal. {\bf 25}, (1995), 363--380.


\bibitem{CRT}
M.G. Crandall, P.H. Rabinowitz, L. Tartar,
{\em On a Dirichlet problem with a singular nonlinearity},
Comm. Partial Differential Equations {\bf 2}, (1977), 193--222.




\bibitem{DeCOl}
L.M. De Cave, F. Oliva, 
{\em On the regularizing effect of some absorption and singular lower order terms in classical Dirichlet problems with $L^1$ data},
J. Elliptic Parabol. Equ. {\bf 2}, (2016), no. 1-2, 73--85.

 \bibitem{DeFMin}
 C. De Filippis, G. Mingione, 
 {\em Gradient regularity in mixed local and nonlocal problems}, 
 Math. Ann. {\bf 388}, (2024), 261--328.


\bibitem{DRV}
E. Di Nezza, G. Palatucci, E. Valdinoci,
{\em Hitchhiker's guide to the fractional Sobolev spaces}, 
Bull. Sci. Math. {\bf 136}, (2012), 521--573.


\bibitem{DiFaZh}
M. Ding, Y. Fang, C. Zhang, C. 
{\em Local behavior of the mixed local and nonlocal problems with nonstandard growth}, 
J. London Math. Soc. {\bf 109}(6), (2024), Article e12947.


\bibitem{DPLV2}
S. Dipierro, E. Proietti Lippi, E. Valdinoci, 
{\em (Non)local logistic equations with Neumann conditions}, Ann. Inst. H. Poincaré Anal. Non Linéaire (2022), 
DOI: 10.4171/AIHPC/57 .

\bibitem{DV}
S. Dipierro, E. Valdinoci,
{\em Description of an ecological niche for a mixed local/nonlocal dispersal: an evolution equation and a new Neumann condition arising from the superposition of Brownian and L\'{e}vy processes},
Phys. A. {\bf 575}, (2021), 126052.



\bibitem{FeVo}
V. Ferone, B. Volzone,
{\em Symmetrization for fractional elliptic problems: a direct approach},
Arch. Ration. Mech. Anal. {\bf 239}, (2021), 1733--1770.


\bibitem{Fulks}
W. Fulks, J.S. Maybee, 
{\em A singular non-linear equation},
Osaka Math. J. {\bf 12},(1960), 1--19.

\bibitem{Garain}
P. Garain,
{\em On a class of mixed local and nonlocal semilinear elliptic equation with singular nonlinearity}, 
J. Geom. Anal. {\bf 33}, (2023), Article 212. 

\bibitem{GKK}
P. Garain, W. Kim, J. Kinnunen,
{\em On the regularity theory for mixed anisotropic and nonlocal p-Laplace equations and its applications to singular problems},
Forum Math. {\bf 36}, (2024), 697--715.

\bibitem{GarainKinnunen}
P. Garain, J. Kinnunen,
{\em On the regularity theory for mixed local and nonlocal quasilinear elliptic equations}, 
Trans. Amer. Math. Soc. {\bf 375}(8), (2022), 5393--5423.
 
\bibitem{GarainLindgren}
P. Garain, E. Lindgren, 
{\em Higher Hölder regularity for mixed local and nonlocal degenerate elliptic equations}, 
Calc. Var. Partial Differ. Equ.{\bf 62}, (2023), Article 67.
 
\bibitem{GarainUkhlov}
P. Garain, A. Ukhlov,
{\em Mixed local and nonlocal Sobolev inequalities with extremal and associated quasilinear singular elliptic problems}, 
Nonlinear Anal. {\bf 223}, (2022), 113022.




\bibitem{HuNg}
 P.-T.- Huynh, P.-T. Nguyen,
 {\em Compactness of Green operators with applications to semilinear nonlocal elliptic equations},
 J. Differential Equations {\bf 418}, (2025), 97--141.
 



\bibitem{LairShaker}
A.V. Lair, A.W. Shaker,
{\em Classical and weak solutions of a singular semilinear elliptic problem},
J. Math. Anal. App. {\bf 211}, (1997), 371--385.

\bibitem{LazMck}
A.C. Lazer, P.J. Mckenna, 
{\em On a singular nonlinear elliptic boundary value problem}, 
Proc. Amer. Math. Soc. {\bf 111}, (1991), 721--730. 

\bibitem{LeoniFract}
G. Leoni,
\emph{A First Course in Fractional Sobolev Spaces}, 
Grad. Stud. Math. 229,
American Mathematical Society, Providence, RI, 2023, xv+586 pp.

\bibitem{LPPS}
T. Leonori, I. Peral, A. Primo, F. Soria, 
{\em Basic estimates for solutions of a class of nonlocal elliptic and parabolic equations},
Discrete Contin. Dyn. Syst. {\bf 35}(12), (2015), 6031--6068

\bibitem{LeLi}
J. Leray, J.-L. Lions, 
{\em Quelques r\'{e}sultats de Vi\v{s}ik sur les probl\`{e}mes elliptiques non lin\'{e}aires par les m\'{e}thodes de Minty-Browder}, 
Bull. Soc. Math. France {\bf 93}, (1965), 97--107.

\bibitem{MonticelliRW}
D.D. Monticelli, S. Rodney, R.L. Wheeden,
\emph{Boundedness of weak solutions of degenerate quasilinear equations with rough coefficients},
Differential Integral Equations \textbf{25}, (2012), 143--200.

\bibitem{KKL}
J. Korvenp\"a\"a, T. Kuusi, E. Lindgren,
\emph{Equivalence of solutions to fractional p-Laplace type equations},
 J. Math. Pures Appl. (9) \textbf{132} (2019), 1-26.

\bibitem{Oliva}
F. Oliva,
{\em Regularizing effect of absorption terms in singular problems},
J. Math. Anal. Appl. {\bf 472}, (2019), no. 1, 1136--1166.

\bibitem{OP}
F. Oliva, F. Petitta, 
{\em On singular elliptic equations with measure sources}, 
ESAIM Control Optim. Calc. Var. {\bf 22}, (2016), 289--308.

\bibitem{OP2}
F. Oliva, F. Petitta,
{\em Finite and infinite energy solutions of singular elliptic problems: existence and uniqueness}, 
J. Differential Equations {\bf 264}, (2018), 311--340.

\bibitem{OP3}
F. Oliva, F. Petitta,
{\em Singular Elliptic PDEs: an extensive overview},
Partial Differ. Equ. Appl. {\bf 6}, (2025), Article 6. 



\bibitem{Showalter}
R.E. Showalter, 
{\em Monotone operators in Banach space and nonlinear partial differential equations},
Math. Surveys Monogr. 49,
American Mathematical Society, Providence, RI, 1997, xiv+278 pp.


\bibitem{Stampacchia}
G. Stampacchia,
{\em Le probl\`{e}me de Dirichlet pour les \'{e}quations elliptiques du second ordre \`{a} coefficients discontinus}, 
Ann. Inst. Fourier (Grenoble) {\bf 15}, (1965), 189--258.

\bibitem{Stuart}
C.A. Stuart, 
{\em Existence and approximation of solutions of non-linear elliptic equations}, Math. Z. {\bf 147}(1), (1976), 53--63.

\bibitem{SVWZ} 
X. Su, E. Valdinoci, Y. Wei, J. Zhang,
{\em Regularity results for solutions of mixed local and nonlocal elliptic equations}, Math. Z. {\bf 302}, (2022), 1855--1878.

\bibitem{SVWZ2} 
X. Su, E. Valdinoci, Y. Wei, J. Zhang,
{\em On Some Regularity Properties of Mixed Local and Nonlocal Elliptic Equations}, 
J. Differential Equations {\bf 416}, (2025), 576--613.

\bibitem{Talenti}
G. Talenti, 
{\em Elliptic equations and rearrangements}, 
Ann. Scuola Norm. Sup. Pisa Cl. Sci. {\bf 3}, (1976), 697--718.

\bibitem{Talenti2}
G. Talenti, Inequalities in rearrangement invariant function spaces, in {\it Nonlinear analysis, function spaces and applications, Vol.\ 5 (Prague, 1994)}, 177--230, Prometheus, Prague.

\bibitem{Vazquez}
J.L. V\'{a}zquez,
{\em A Strong Maximum Principle for Some Quasilinear Equations},
Appl. Math. Optim. {\bf 12}, (1984), 191--202.



\bibitem{YoMa}
A. Youssfi, G.O.M. Mahmoud, 
{\em Nonlocal semilinear elliptic problems with singular nonlinearity}, 
Calc. Var. Partial Differ. Equ. {\bf 60}, (2021), Article 153. 
 
 \end{thebibliography}
\end{document}